  \newtheorem{theorem}{Theorem}[section]
	\newtheorem{proposition}[theorem]{Proposition}
	\newtheorem{conjecture}[theorem]{Conjecture}
	\newtheorem{problem}[theorem]{Problem}
	\newtheorem{lemma}[theorem]{Lemma}
	\newtheorem{corollary}[theorem]{Corollary}
	\theoremstyle{definition}
	\newtheorem*{claim*}{Claim}
	\newtheorem{remark}[theorem]{Remark}
	\newtheorem{definition}[theorem]{Definition}
	\newtheorem{example}[theorem]{Example}
	\theoremstyle{remark}
   \DeclareMathOperator {\lk}{lk} 
    \DeclareMathOperator {\st}{st}
  \DeclareMathOperator {\Out}{Out} 
  \DeclareMathOperator {\Aut}{Aut} 
\newcommand{\abs}[1]{\left|#1\right|}
\newcommand\reallywidehat[1]{%
	\savestack{\tmpbox}{\stretchto{%
			\scaleto{%
				\scalerel*[\widthof{\ensuremath{#1}}]{\kern-.6pt\bigwedge\kern-.6pt}%
				{\rule[-\textheight/2]{1ex}{\textheight}}
			}{\textheight}%
		}{0.5ex}}%
	\stackon[1pt]{#1}{\tmpbox}%
}
\newcommand\restr[2]{{
		\left.\kern-\nulldelimiterspace 
		#1 
		\right|_{#2} 
}}
\title{On the $\ell^2$-Betti numbers and algebraic fibring of the (outer) automorphism group of a right-angled Artin groups}
\author{Marcos Escartín-Ferrer}
\begin{document}

 \maketitle

\begin{abstract} 
We compute the first $\ell^2$-Betti number of the automorphism and outer automorphism groups of arbitrary right-angled Artin groups (RAAGs), providing a complete characterization of when it is non-zero. We also analyse the algebraic fibring of the pure symmetric automorphism groups $\mathrm{PSA}(A_\Gamma)$ and $\mathrm{PSO}(A_\Gamma)$ and the virtual algebraic fibring of $\Out(A_\Gamma)$ in the case when $A_\Gamma$ admits no non-inner partial conjugation. In the transvection-free case, we show that $\beta_1^{(2)}(\Out(A_\Gamma)) = 0$ if and only if $\Out(A_\Gamma)$ virtually fibres.
\end{abstract}
\keywords{Right-angled Artin groups \and $\ell^2$-Betti numbers \and Automorphism groups \and Algebraic fibring}
\section*{Acknowledgements}
The author is partially supported by the Departamento de Ciencia, Universidad y Sociedad del Conocimiento del Gobierno de Aragón (grant code: E22-23R: “Álgebra y Geometría”), and by the Spanish Government PID2021-126254NBI00. The author
	would like to thank  Sam Fisher, Dawid Kielak and Ric Wade for conversations during a short stay in Oxford.
\section{Introduction}\label{Intro}
The $\ell^2$-Betti numbers of a group were first introduced by Atiyah (cf. \cite{Atilla}) in the context of free, cocompact group actions on manifolds. Cheeger and Gromov later extended the notion to arbitrary countable groups (cf. \cite{Cheeger}). These invariants serve as powerful tools for capturing asymptotic properties of groups. For a countable group $G$, the $\ell^2$-Betti numbers are denoted by $\beta_k^{(2)}(G)$, where $k$ is a non-negative integer. A rigorous definition and thorough exposition can be found in \cite{Kammeyer}.

Given a simplicial graph $\Gamma$, i.e., a finite graph with no loops or multiple edges, the associated \textbf{right-angled Artin group} (RAAG) is defined as the group generated by the vertex set $V(\Gamma)$, with relations $uv = vu$ for every edge $e = \lbrace u, v\rbrace \in E(\Gamma)$. This family of groups interpolates between free groups and free abelian groups, providing a flexible framework for exploring group-theoretic phenomena.

The (outer) automorphism groups of RAAGs serve as a bridge between $\mathrm{GL}_n(\mathbb{Z})$ and the automorphism and outer automorphism groups of free groups, $\Aut(F_n)$ and $\Out(F_n)$. Laurence showed in \cite{Laurence} that $\Aut(A_\Gamma)$, and hence $\Out(A_\Gamma)$, is finitely generated. A key concept for understanding its generating set is the \textbf{domination relation}, introduced by Servatius (cf. \cite{Servatius}). This relation characterizes when the \textbf{transvection} $\tau_w^v$, defined by sending the generator $w$ to $wv$ and fixing all others, yields an automorphism of $A_\Gamma$. Specifically, we say that $w$ is dominated by $v$, written $w \leq v$, if $\lk(w) \subset \st(v)$ (every vertex adjacent to $w$ is also adjacent to or equal to $v$). It is straightforward to verify that $\tau_w^v \in \Aut(A_\Gamma)$ if and only if $w \leq v$. This relation defines a preorder on the vertex set, giving rise to equivalence classes of vertices.

Another fundamental type of generators are the \textbf{partial conjugations}. Given a vertex $v \in V(\Gamma)$ and a connected component $C$ of the subgraph $\Gamma \setminus \st(v)$, the partial conjugation $\pi_C^v$ is the automorphism that sends each generator $u \in V(C)$ to $vuv^{-1}$ and fixes all other generators.

The $\ell^2$-Betti numbers of $\mathrm{GL}_n(\mathbb{Z})$ are well understood (see Theorem~\ref{BettiGL(Z)}), whereas the $\ell^2$-Betti numbers of $\Aut(F_n)$ and $\Out(F_n)$ remain largely unknown. Nonetheless, several partial results are known:
\begin{itemize}
	\item $\beta_1^{(2)}(\Out(F_n)) = 0$ for all $n \neq 2$, and $\beta_1^{(2)}(\Aut(F_n)) = 0$ for all $n \geq 0$ (cf.  Gaboriau–Nôus \cite{Gaboriau}).
	\item $\beta_2^{(2)}(\Out(F_n)) = 0$ for all $n \geq 5$ (cf. Abért–Gaboriau \cite{Abert}).
	\item $\beta_{2n - 3}^{(2)}(\Out(F_n)) > 0$ and $\beta_{2n - 2}^{(2)}(\Aut(F_n)) > 0$ for all $n \geq 2$ (cf. Gaboriau–Nôus \cite{Gaboriau}).
\end{itemize}
Here, we broaden the scope to arbitrary RAAGs and consider the following:
\begin{problem}
	Given a RAAG $A_\Gamma$, compute the $\ell^2$-Betti numbers of $\Aut(A_\Gamma)$ and $\Out(A_\Gamma)$, or at least determine whether they vanish.
\end{problem}

A central object in our study of the $\ell^2$-Betti numbers is the pure symmetric automorphism group. For a RAAG $A_\Gamma$, the \textbf{pure symmetric automorphism group}, denoted $\mathrm{PSA}(A_\Gamma)$, is the subgroup of $\Aut(A_\Gamma)$ generated by all partial conjugations. The corresponding quotient in $\Out(A_\Gamma)$ is called the \textbf{pure symmetric outer automorphism group}, denoted $\mathrm{PSO}(A_\Gamma)$. Day and Wade proved in \cite{Day 4} that every RAAG arises as the pure symmetric outer automorphism group of another RAAG. Moreover, they showed that if $\Gamma \setminus \st(v)$ has at most two connected components for every vertex $v \in V(\Gamma)$, then there exists a graph $\Theta$ such that $\mathrm{PSO}(A_\Gamma)$ is isomorphic to $A_\Theta$, see Section \ref{SymetricAUT} for the explicit construction of the graph $\Theta$.

The main result of this paper is the computation of the first $\ell^2$-Betti number of $\Aut(A_\Gamma)$ and $\Out(A_\Gamma)$ for arbitrary RAAGs.
\begin{theorem}\label{Theorem 1.1}
	Let $A_\Gamma$ be a RAAG. Then, $\beta_1^{(2)}(\Aut(A_\Gamma)) > 0$ if and only if $A_\Gamma \cong \mathbb{Z}^2$ and $\beta_1^{(2)}(\Out(A_\Gamma)) > 0$ if and only if one of the following conditions holds:
		\begin{enumerate}
			\item There are no non-inner partial conjugations, and there are exactly two transvections lying in the same equivalence class of two elements.
			\item There are no transvections, $\Gamma \setminus \st(v)$ has at most two connected components for every $v \in V(\Gamma)$, and $\mathrm{PSO}(A_\Gamma)$ is isomorphic to a RAAG $A_\Theta$ where the graph $\Theta$ is disconnected.
		\end{enumerate}
\end{theorem}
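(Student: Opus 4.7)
The plan is to combine two standard vanishing principles with the known generating set of $\Aut(A_\Gamma)$ and $\Out(A_\Gamma)$. The principles are: \emph{(a)} if $G$ admits an infinite normal subgroup $N$ of infinite index with $\beta_1^{(2)}(N)<\infty$, then $\beta_1^{(2)}(G)=0$; and \emph{(b)} Lück's multiplicativity $\beta_1^{(2)}(H)=[G:H]\,\beta_1^{(2)}(G)$ under a finite-index inclusion. The structural input is Laurence's generating set of $\Aut(A_\Gamma)$ by transvections, partial conjugations, inversions and graph automorphisms, together with the Day--Wade isomorphism $\mathrm{PSO}(A_\Gamma)\cong A_\Theta$ available whenever $\Gamma\setminus\st(v)$ has at most two components for every $v$.

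For $\Aut(A_\Gamma)$ I would first dispose of the abelian case $A_\Gamma\cong\mathbb{Z}^n$: here $\Aut(A_\Gamma)=\mathrm{GL}_n(\mathbb{Z})$ and the result follows from the known $\ell^2$-Betti numbers of $\mathrm{GL}_n(\mathbb{Z})$, which isolate $n=2$. Otherwise $A_\Gamma$ is non-abelian, so $\mathrm{Inn}(A_\Gamma)\cong A_\Gamma/Z(A_\Gamma)$ is an infinite normal subgroup of $\Aut(A_\Gamma)$ of infinite index with finite first $\ell^2$-Betti number, and principle \emph{(a)} gives $\beta_1^{(2)}(\Aut(A_\Gamma))=0$.

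For $\Out(A_\Gamma)$ I would split by whether transvections and non-inner partial conjugations are present. When both are present, I would construct an infinite normal subgroup of $\Out(A_\Gamma)$ with finite first $\ell^2$-Betti number and infinite index, built from a suitable normal closure coupling a partial conjugation with the transvection part, and conclude vanishing via \emph{(a)}. When no non-inner partial conjugations exist, $\Out(A_\Gamma)$ is generated up to finite index by transvections; restricting to the free abelian subgroup spanned by an equivalence class of the domination preorder yields a homomorphism into some $\mathrm{GL}_k(\mathbb{Z})$ whose image is virtually $\mathrm{GL}_2(\mathbb{Z})$ precisely in the configuration of condition (1), while in every other configuration the image sits inside $\mathrm{GL}_1(\mathbb{Z})$ or $\mathrm{GL}_k(\mathbb{Z})$ with $k\geq 3$, both with vanishing first $\ell^2$-Betti number, and principle \emph{(a)} applied to the kernel gives the vanishing. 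When no transvections exist, inversions and graph automorphisms form a finite quotient, so $\mathrm{PSO}(A_\Gamma)$ has finite index in $\Out(A_\Gamma)$; under the link hypothesis the Day--Wade isomorphism $\mathrm{PSO}(A_\Gamma)\cong A_\Theta$ reduces the problem to the dichotomy $\beta_1^{(2)}(A_\Theta)>0\iff\Theta$ is disconnected, which is condition (2), and principle \emph{(b)} transfers the result to $\Out(A_\Gamma)$; when the link hypothesis fails, a vertex $v$ with $\Gamma\setminus\st(v)$ in at least three components provides an explicit infinite normal subgroup of $\mathrm{PSO}(A_\Gamma)$ with finite first $\ell^2$-Betti number, forcing the vanishing via \emph{(a)}.

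The main obstacle is the vanishing direction when transvections and non-inner partial conjugations coexist: $\Out(A_\Gamma)$ does not split as a direct product of the transvection part and the partial-conjugation part, so the interaction of these two families of generators must be understood well enough to produce the required infinite normal subgroup. I anticipate the construction to depend on a fine analysis of how partial conjugations act on the free abelian subgroup spanned by an equivalence class of the domination preorder, and to call upon Lyndon--Hochschild--Serre arguments to control the induced map in first $\ell^2$-cohomology.
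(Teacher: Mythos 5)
Your overall skeleton (case division by the presence of transvections and non-inner partial conjugations, reduction to $\mathrm{GL}_n(\mathbb{Z})$ in the abelian and transvection-only cases, and reduction to $\mathrm{PSO}(A_\Gamma)\cong A_\Theta$ plus Davis--Leary in the transvection-free case) matches the paper. But the case you yourself flag as the main obstacle --- transvections and non-inner partial conjugations coexisting --- is a genuine gap, and the resolution does not require any ``fine analysis'' of how partial conjugations act on equivalence classes or any ad hoc normal closure. The missing idea is the Torelli subgroup: $\mathrm{IA}_\Gamma=\ker\bigl(\rho:\mathrm{SOut}^0(A_\Gamma)\to Q\bigr)$ is finitely generated (Day, Wade), it contains every partial conjugation (these act trivially on the abelianization) and hence is infinite when a non-inner partial conjugation exists, and $Q$ is infinite when a transvection exists; the short exact sequence $1\to\mathrm{IA}_\Gamma\to\mathrm{SOut}^0(A_\Gamma)\to Q\to 1$ then kills $\beta_1^{(2)}$ by exactly your principle \emph{(a)}, and finite-index multiplicativity transfers this to $\Out(A_\Gamma)$. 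Without identifying a concrete finitely generated infinite normal subgroup with infinite quotient, this case of your argument does not close.

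Two further points. First, your $\Aut(A_\Gamma)$ argument asserts that $\mathrm{Inn}(A_\Gamma)$ has infinite index in $\Aut(A_\Gamma)$ for every non-abelian $A_\Gamma$; this is false whenever $\Out(A_\Gamma)$ is finite, and that case needs a separate argument (the paper shows $\Gamma$ is then connected and centreless, so $A_\Gamma$ has finite index in $\Aut(A_\Gamma)$ and $\beta_1^{(2)}(A_\Gamma)=\overline{\beta}_0(\widehat{\Gamma})=0$). Second, in the transvection-only case your dichotomy ``image virtually $\mathrm{GL}_2(\mathbb{Z})$ versus image in $\mathrm{GL}_1(\mathbb{Z})$ or $\mathrm{GL}_{k\geq 3}(\mathbb{Z})$'' does not cover all configurations: when there are $n\geq 2$ equivalence classes of size two and no inter-class domination, $Q\cong\mathrm{SL}_2(\mathbb{Z})^n$ and vanishing of $\beta_1^{(2)}$ comes from the K\"unneth formula, while when there is domination between distinct classes $Q$ is block lower triangular and the vanishing comes from a normal free abelian (unipotent) subgroup; similarly, in the transvection-free case with some $\Gamma\setminus\st(v)$ having at least three components, the paper does not exhibit a normal subgroup but instead produces an explicit character outside any $\delta$-p-set via the Koban--Piggott/Day--Wade computation of $\Sigma^1(\mathrm{PSO}(A_\Gamma))$, so that $\mathrm{PSO}(A_\Gamma)$ genuinely fibres. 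Your unspecified ``explicit infinite normal subgroup'' in that case would need to be produced.
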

The higher $\ell^2$-Betti numbers of the (outer) automorphism groups of RAAGs remain largely mysterious in general.

A group $G$ is said to be \textbf{algebraically fibred} if there exists a surjective homomorphism $\varphi: G \to \mathbb{Z}$ with finitely generated kernel. This notion is closely linked to the first $\ell^2$-Betti number: if $G$ is virtually algebraically fibred, then $\beta_1^{(2)}(G) = 0$ (see Proposition \ref{VanishingBetti}.3). Conversely, Kielak proved that for virtually RFRS (residually finite rationally solvable) groups, vanishing of the first $\ell^2$-Betti number implies that the group is virtually algebraically fibred (cf. \cite{Kielak}). RAAGs are known to be virtually RFRS.

We completely determine the algebraic fibring properties of the pure symmetric automorphism groups $\mathrm{PSA}(A_\Gamma)$ and $\mathrm{PSO}(A_\Gamma)$:

\begin{theorem}\label{Theorem 1.5}
	Let $A_\Gamma$ be a RAAG. Then:
	\begin{enumerate}
		\item $\mathrm{PSA}(A_\Gamma)$ fibres if and only if it virtually fibres, which occurs if and only if $A_\Gamma \not\cong \mathbb{Z}^n$ or $\mathbb{Z}^n \ast \mathbb{Z}^m$ for any $n, m \geq 1$.
		\item $\mathrm{PSO}(A_\Gamma)$ fibres if and only if it virtually fibres, which occurs if and only if either:
		\begin{itemize}
			\item There exists $v \in V(\Gamma)$ such that $\Gamma \setminus \st(v)$ has at least three connected components, or
			\item For every $v \in V(\Gamma)$, the graph $\Gamma \setminus \st(v)$ has at most two connected components, and $\mathrm{PSO}(A_\Gamma)$ is isomorphic to $A_\Theta$ with $\Theta$ connected.
		\end{itemize}
	\end{enumerate}
\end{theorem}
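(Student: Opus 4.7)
The implication ``fibres $\Rightarrow$ virtually fibres'' is immediate. For the converse, both $\mathrm{PSA}(A_\Gamma)$ and $\mathrm{PSO}(A_\Gamma)$ are virtually RFRS (via their being commensurable with subgroups of RAAGs, using the Day--Wade structural results), so Kielak's theorem (invoked through Proposition~\ref{VanishingBetti}) reduces virtual fibring to the vanishing of $\beta_1^{(2)}$. In each case I would therefore argue in two directions: exclude virtual fibring by showing $\beta_1^{(2)}>0$, and produce a genuine fibring by exhibiting an explicit surjection onto $\mathbb Z$ with finitely generated kernel via a Bestvina--Brady-style Morse argument. Theorem~\ref{Theorem 1.1} should play the role of the ``$\beta_1^{(2)}=0$'' criterion all the way through.

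\textbf{Part 1.} If $\Gamma$ is complete, $\mathrm{PSA}(A_\Gamma)=1$ trivially does not fibre. If $\Gamma$ consists of two complete components, then every complement $\Gamma\setminus\st(v)$ is connected, so each partial conjugation coincides with $\mathrm{conj}_v$, giving $\mathrm{PSA}(A_\Gamma)=\mathrm{Inn}(A_\Gamma)\cong\mathbb Z^n\ast\mathbb Z^m$; the free-product formula forces $\beta_1^{(2)}\geq 1$ and virtual fibring is excluded. Otherwise, either some $\Gamma\setminus\st(v)$ has at least two components or $\Gamma$ has at least three complete components. In either subcase I would use the short exact sequence $1\to\mathrm{Inn}(A_\Gamma)\to\mathrm{PSA}(A_\Gamma)\to\mathrm{PSO}(A_\Gamma)\to 1$, together with the Part~2 analysis, to produce a character $\phi:\mathrm{PSA}(A_\Gamma)\to\mathbb Z$; finite generation of its kernel would follow from a Morse argument on a suitable cube complex.

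\textbf{Part 2 and main obstacle.} When every $\Gamma\setminus\st(v)$ has at most two components, Day--Wade give $\mathrm{PSO}(A_\Gamma)\cong A_\Theta$; the RAAG $A_\Theta$ (virtually) fibres iff $\Theta$ is connected, by Bestvina--Brady in one direction and the free-product formula (which yields $\beta_1^{(2)}\geq 1$) in the other. The genuinely difficult case is when some $\Gamma\setminus\st(v)$ has at least three components, so $\mathrm{PSO}(A_\Gamma)$ is \emph{not} a RAAG. Here I would exploit the fact that the relation $\prod_i\pi^v_{C_i}=\mathrm{conj}_v$ becomes trivial in $\mathrm{PSO}$ to define a character $\phi:\mathrm{PSO}(A_\Gamma)\to\mathbb Z$, assigning independent weights to the $\pi^v_{C_i}$'s, and verify finite generation of $\ker\phi$ by a Morse argument on the cube complex of Day--Wade~\cite{Day 4}. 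Carrying out this Morse argument in the absence of a RAAG identification---in particular controlling ascending and descending links carefully enough to conclude finite generation---is the principal technical obstacle; once this is done, the PSA case follows by combining the fibration of $\mathrm{PSO}$ with the inner-automorphism quotient.
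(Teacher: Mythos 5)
There is a genuine gap, and it sits exactly where you place your ``principal technical obstacle''. In the decisive case --- some $\Gamma\setminus\st(v)$ with at least three components, where $\mathrm{PSO}(A_\Gamma)$ is not identified with a RAAG --- you propose to define a character and then verify finite generation of its kernel ``by a Morse argument on the cube complex of Day--Wade'', but you do not carry this out, and there is no reason to expect it to be routine: controlling ascending and descending links for an arbitrary character on these complexes is precisely the hard analysis that Koban--Piggott and Day--Wade already packaged into their computation of the BNS invariants. The paper's proof (Propositions~\ref{FibringPSA} and~\ref{FibringPSO}) never touches Morse theory directly: it uses Theorem~\ref{SigmaPSO}, which describes $\Sigma^1(\mathrm{PSA}(A_\Gamma))$ and $\Sigma^1(\mathrm{PSO}(A_\Gamma))$ completely in terms of p-sets and $\delta$-p-sets. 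Since these descriptions are symmetric under $\chi\mapsto-\chi$, Corollary~\ref{EmptySigma1} reduces fibring to $\Sigma^1\neq\emptyset$, and one only has to write down one explicit character (e.g.\ $\chi(\pi^v_L)=\chi(\pi^v_K)=1$, $\chi(\pi^v_R)=-2$ in the three-component case) and check its support is not contained in a $\delta$-p-set. Your proposal is missing this key input, and without it the main implication is unproved.

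Two further problems with the surrounding scaffolding. First, your reduction of virtual fibring to $\beta_1^{(2)}=0$ via Kielak requires $\mathrm{PSA}(A_\Gamma)$ and $\mathrm{PSO}(A_\Gamma)$ to be virtually RFRS in general; the Day--Wade identification with a RAAG only holds when every support graph is a forest, so in the $\geq3$-components case this is unsubstantiated --- and even granting it, Kielak's theorem yields only \emph{virtual} fibring, whereas the statement demands genuine fibring. Second, invoking Theorem~\ref{Theorem 1.1} as the vanishing criterion ``all the way through'' would be circular in the paper's logical architecture: the proof of Theorem~\ref{Theorem 1.1} in the transvection-free case itself relies on Proposition~\ref{FibringPSO}, i.e.\ on the theorem you are trying to prove. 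The cases you do handle correctly (complete $\Gamma$, two complete components via $\mathrm{PSA}\cong\mathbb{Z}^n\ast\mathbb{Z}^m$, and the RAAG case of $\mathrm{PSO}$ via Bestvina--Brady and connectedness of $\Theta$) agree with the paper, but they are the easy directions.
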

We conjecture that Kielak’s theorem remains valid when the assumption of being RFRS is weakened to being residually torsion-free nilpotent.

\begin{conjecture}\label{Conjecture 1.4}
	Let $G$ be a virtually residually torsion-free nilpotent group. Then, $\beta_1^{(2)}(G) = 0$ if and only if $G$ is virtually algebraically fibred.
\end{conjecture}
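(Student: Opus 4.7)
The easy direction — that virtually algebraically fibred groups have vanishing first $\ell^2$-Betti number — is already recorded as Proposition \ref{VanishingBetti}.3 and holds in full generality, so the substance of the conjecture lies in the converse. Assume therefore that $G$ is virtually residually torsion-free nilpotent (RTFN) with $\beta_1^{(2)}(G) = 0$. I would first reduce to the case where $G$ itself is finitely generated and RTFN, using that both hypotheses and the desired conclusion are stable under passage to finite-index subgroups and that $\beta_1^{(2)}$ scales multiplicatively with the index.

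The most direct plan is to establish that finitely generated RTFN groups are virtually RFRS, after which Kielak's theorem applies. Let $\{\phi_i : G \to N_i\}_{i \in I}$ be a separating family of torsion-free nilpotent quotients, so that $\bigcap_i \ker(\phi_i) = \{1\}$. Each $N_i$ is finitely generated torsion-free nilpotent, hence poly-$\mathbb{Z}$ and RFRS. I would then attempt to build an RFRS chain intrinsically in $G$: set $G_0 = G$ and, inductively, take $G_{j+1}$ to be the preimage in $G_j$ of a characteristic finite-index subgroup of $H_1(G_j;\mathbb{Z})_{/\mathrm{tors}}$, chosen so as to exclude a prescribed enumeration of non-trivial elements. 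By construction, $G_{j+1}$ is a finite-index normal subgroup of $G$ containing $\ker(G_j \to H_1(G_j;\mathbb{Z})_{/\mathrm{tors}})$, so the RFRS kernel condition holds automatically; the task reduces to arranging the selections so that $\bigcap_j G_j = \{1\}$.

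This intersection condition is the main obstacle. The RTFN hypothesis should enter as follows: given $g \in G \setminus \{1\}$, some $\phi_i$ sends $g$ to a non-trivial element $\bar{g}$ of $N_i$, and since the rational lower central series of $N_i$ separates its points, $\bar{g}$ survives in a finite quotient of $N_i$ reached by a tower of abelianization-mod-torsion steps. My plan is to transplant this detection into the intrinsic tower for $G$ and diagonalise over a countable enumeration of non-trivial elements. The delicate point is that $H_1(G_j;\mathbb{Z})_{/\mathrm{tors}}$ is in general much larger than what is seen by the composition $G_j \hookrightarrow G \to N_i$, so the richer abelianizations at deeper levels of the tower must be shown to still interact compatibly with the nilpotent quotients.

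If the RTFN-to-RFRS reduction proves intractable, a parallel strategy is to adapt Kielak's proof directly, replacing the Linnell skew field by the Hughes-free division ring $\mathcal{D}_{\mathbb{Q} G}$, which exists for RTFN groups by Jaikin-Zapirain's work. One would reinterpret $\beta_1^{(2)}(G) = 0$ as a $\mathcal{D}_{\mathbb{Q} G}$-dimension vanishing statement and transport the BNS-invariant and generalised Thurston-norm machinery to this division-ring framework. The technical obstacle here is to produce, without an a priori RFRS tower, the finite-index subgroups whose BNS invariants certify a character with finitely generated kernel.
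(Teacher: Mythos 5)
The statement you are addressing is stated in the paper as Conjecture~\ref{Conjecture 1.4}: the paper offers no proof of it, only a verification in the special case of $\Out(A_\Gamma)$ for transvection-free $A_\Gamma$, where the group is virtually RFRS and Kielak's theorem \cite{Kielak} applies directly. So your text must be judged as a proposed proof of an open problem, and as such it contains a fatal gap. The ``only if'' direction is indeed Proposition~\ref{VanishingBetti}.3, but your main strategy for the converse --- show that every finitely generated residually torsion-free nilpotent group is virtually RFRS, then quote Kielak --- rests on a false reduction. The integral Heisenberg group $H=\langle x,y,z \mid [x,y]=z,\ z \text{ central}\rangle$ is torsion-free nilpotent, hence residually torsion-free nilpotent, but it is not virtually RFRS. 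Indeed, for any finite-index subgroup $K\leq H$ one has $z^m\in[K,K]$ for some $m\geq 1$, so $z\in\ker\bigl(K\to H_1(K;\mathbb{Q})\bigr)$; consequently any candidate RFRS chain $H=G_0\geq G_1\geq\cdots$ must contain $z$ in every term and its intersection is never trivial, and the same argument applies verbatim to every finite-index subgroup of $H$, which is again non-abelian torsion-free nilpotent. (The conjecture itself is harmless for $H$, which is amenable and fibres over $\mathbb{Z}$ with kernel $\mathbb{Z}^2$, but your route to it collapses.) This is precisely why the paper records the statement as a conjecture rather than deducing it from \cite{Kielak}: residual torsion-free nilpotence is genuinely weaker than virtual RFRS, and the entire content of the conjecture is whether Kielak's theorem survives that weakening.

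Your fallback strategy --- rerunning Kielak's argument over the Hughes-free division ring $\mathcal{D}_{\mathbb{Q}G}$ available for residually torsion-free nilpotent groups --- is a sensible research direction, but as written it is a statement of intent rather than an argument. The step you flag as ``the technical obstacle,'' namely producing finite-index subgroups whose BNS invariants certify a character with finitely generated kernel, is not a peripheral difficulty: in Kielak's proof the RFRS tower is the mechanism by which those subgroups are constructed, so removing the tower removes the engine of the proof. Neither branch of your proposal constitutes a proof, and no proof is currently known.
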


In the setting of automorphism groups of RAAGs, the \textbf{Torelli subgroup} $\mathrm{IA}_\Gamma$ (defined as the kernel of the canonical homomorphism $\Out(A_\Gamma) \to \mathrm{GL}_n(\mathbb{Z})$ induced by the action on the abelianization of $A_\Gamma$) is a prominent example of a residually torsion-free nilpotent group (cf. \cite{Wade}).

When $A_\Gamma$ is \textbf{transvection-free}, i.e. if $u,v\in V(\Gamma)$ are such that $\lk(u)\subset\st(v)$ then $u=v$, the Torelli subgroup is of finite index in $\Out(A_\Gamma)$. As a consequence of Theorems~\ref{Theorem 1.1} and \ref{Theorem 1.5}, we verify that Conjecture~\ref{Conjecture 1.4} holds for $\Out(A_\Gamma)$ in this case.

\begin{corollary}
	Let $A_\Gamma$ be a transvection-free RAAG. Then, $\beta_1^{(2)}(\Out(A_\Gamma)) = 0$ if and only if $\Out(A_\Gamma)$ is virtually algebraically fibred.
\end{corollary}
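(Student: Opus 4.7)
The plan is to verify the corollary by combining Theorem~\ref{Theorem 1.1} (which characterises the vanishing of $\beta_1^{(2)}(\Out(A_\Gamma))$) with Theorem~\ref{Theorem 1.5}(2) (which characterises virtual algebraic fibring of $\mathrm{PSO}(A_\Gamma)$). One direction, that a virtually algebraically fibred group has vanishing first $\ell^2$-Betti number, is a general fact already recorded in the introduction, so the real content lies in the converse.

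For the converse I would first reduce from $\Out(A_\Gamma)$ to $\mathrm{PSO}(A_\Gamma)$. In the transvection-free case, $\Out(A_\Gamma)$ is generated modulo $\mathrm{PSO}(A_\Gamma)$ by inversions and graph symmetries, which together span a finite group; hence $\mathrm{PSO}(A_\Gamma)$ has finite index in $\Out(A_\Gamma)$. Multiplicativity of the first $\ell^2$-Betti number under finite-index inclusions yields $\beta_1^{(2)}(\Out(A_\Gamma)) = 0$ if and only if $\beta_1^{(2)}(\mathrm{PSO}(A_\Gamma)) = 0$, and virtual algebraic fibring is preserved through finite-index supergroups in both directions, so it suffices to match up the property ``$\beta_1^{(2)}(\mathrm{PSO}(A_\Gamma))=0$'' with ``$\mathrm{PSO}(A_\Gamma)$ virtually fibres''.

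Now assume $\beta_1^{(2)}(\Out(A_\Gamma)) = 0$. Since there are no transvections, condition (1) of Theorem~\ref{Theorem 1.1} cannot hold, so its condition (2) must fail. Invoking the Day--Wade theorem, whenever every $\Gamma \setminus \st(v)$ has at most two components the group $\mathrm{PSO}(A_\Gamma)$ is automatically isomorphic to some RAAG $A_\Theta$; hence the failure of condition (2) splits into exactly two cases --- either some $\Gamma \setminus \st(v)$ has at least three components, or every $\Gamma \setminus \st(v)$ has at most two components and the associated graph $\Theta$ is connected. These are precisely the two alternatives appearing in Theorem~\ref{Theorem 1.5}(2), each of which guarantees that $\mathrm{PSO}(A_\Gamma)$ virtually algebraically fibres, and therefore so does $\Out(A_\Gamma)$ by the reduction above. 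The main work is the bookkeeping of aligning the conditions in Theorems~\ref{Theorem 1.1}(2) and~\ref{Theorem 1.5}(2); the only subtlety to verify carefully is the finite-index inclusion $\mathrm{PSO}(A_\Gamma) \leq \Out(A_\Gamma)$ in the transvection-free setting, which rests on the fact that the remaining generating classes of $\Out(A_\Gamma)$ contribute only a finite quotient.
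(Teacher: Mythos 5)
Your proposal is correct and follows essentially the same route as the paper: both directions rest on reducing to the finite-index subgroup $\mathrm{PSO}(A_\Gamma)$ and matching the negation of condition (2) of Theorem~\ref{Theorem 1.1} against the two alternatives of Theorem~\ref{Theorem 1.5}(2), using the Day--Wade result to guarantee that $\mathrm{PSO}(A_\Gamma)$ is a RAAG $A_\Theta$ whenever every $\Gamma\setminus\st(v)$ has at most two components. The one caveat --- which the paper shares, so I do not count it against you --- is the degenerate case where $A_\Gamma$ also admits no non-inner partial conjugations: then $\mathrm{PSO}(A_\Gamma)$ is trivial and $\Theta$ is empty, so your dichotomy ``$\Theta$ connected or $\Theta$ disconnected'' is not exhaustive, and indeed there $\Out(A_\Gamma)$ is finite with $\beta_1^{(2)}=0$ yet cannot virtually fibre onto $\mathbb{Z}$.
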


We also identify two conditions on the set of transvections that guarantee the virtual algebraic fibering of $\Out(A_\Gamma)$, and we provide a complete characterization of virtual algebraic fibering in the case when there are no non-inner partial conjugations.

\begin{theorem}\label{Theorem 1.7}
	Let $A_\Gamma$ be a RAAG. Then $\Out(A_\Gamma)$ is virtually algebraically fibred if at least one of the following conditions holds:
	\begin{enumerate}
		\item There exist distinct vertices $u, v \in V(\Gamma)$ such that $[u] =\lbrace u\rbrace$, $[v]=\lbrace v\rbrace$, $v \leq u$, and there is no vertex $w \neq u,v$ with $v \leq w \leq u$.
		\item There exist at least two equivalence classes of vertices containing exactly two elements.
	\end{enumerate}
Moreover, if $A_\Gamma$ admits no non-inner partial conjugations, then the converse holds.
\end{theorem}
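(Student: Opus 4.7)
The plan is to handle the two sufficient conditions independently by constructing explicit virtual fibrations, and then to argue the converse by contrapositive under the no-non-inner-partial-conjugation hypothesis, combining Theorem~\ref{Theorem 1.1} with structural facts about $\Out(A_\Gamma)$.

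For the sufficiency of (1), the extremal pair $(v,u)$ gives rise to a transvection $\tau_v^u$ whose class in $\Out(A_\Gamma)$ is not ``mediated'' by any other transvection between $v$ and $u$. I would use the Day-Wade/Charney-Vogtmann filtration of $\Out(A_\Gamma)$ by the domination poset to build a character $\phi \colon H \to \mathbb{Z}$ on a finite-index subgroup $H$ sending $\tau_v^u$ to $1$ and vanishing on the remaining Laurence generators; the no-intermediate-vertex hypothesis ensures $\phi$ is well-defined modulo the relations, and finite generation of $\ker\phi$ would follow by generating it explicitly from the other Laurence generators plus one commutator relation, using Laurence's finite presentation. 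For the sufficiency of (2), two equivalence classes $\{a,b\}$, $\{c,d\}$ of size two give, via restriction to each rank-$2$ subgroup, a projection of a finite-index subgroup $H\leq \Out(A_\Gamma)$ onto (a finite-index subgroup of) $\mathrm{GL}_2(\mathbb{Z})\times \mathrm{GL}_2(\mathbb{Z})$. Since $\mathrm{GL}_2(\mathbb{Z})$ is virtually free, passing to a further finite-index subgroup gives a map to $F_n\times F_m$ with $n,m\geq 2$, and the diagonal character sending every free generator to $1$ has finitely generated kernel by a standard Bieri-Neumann-Strebel computation. Pulling back produces a virtual fibration of $\Out(A_\Gamma)$.

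For necessity, suppose no non-inner partial conjugations exist and $\Out(A_\Gamma)$ virtually fibres; then $\beta_1^{(2)}(\Out(A_\Gamma))=0$, so by Theorem~\ref{Theorem 1.1} condition (1) of that theorem fails. If, moreover, neither (1) nor (2) of Theorem~\ref{Theorem 1.7} holds, then at most one equivalence class has size two and every singleton-singleton cover in the domination poset is blocked by an intermediate vertex. The plan is to deduce from this that a finite-index subgroup of $\Out(A_\Gamma)$ is an extension whose quotient virtually embeds in $\prod_i \mathrm{GL}_{n_i}(\mathbb{Z})$ with every $n_i\neq 2$ except possibly one factor, and whose kernel is a nilpotent subgroup of transvections with no direct $\mathbb{Z}$-summand in its abelianization, precisely because of the intermediate-vertex blocking. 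Since $\mathrm{GL}_n(\mathbb{Z})$ for $n\geq 3$ has property (T) and no surjection onto $\mathbb{Z}$, any virtual character of $\Out(A_\Gamma)$ would have to come from the at-most-one $\mathrm{GL}_2$ factor, and I would then show that the normal closure of a preimage of a $\mathbb{Z}$-generator is not finitely generated because the transvection kernel fails to be finitely generated as a normal subgroup modulo that character.

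The main obstacle I expect is this necessity direction, specifically the translation of the combinatorial blocking condition on the domination poset into a rigorous vanishing of the BNS invariant $\Sigma^1$ of the relevant finite-index subgroups. The sufficiency side should reduce to standard constructions once the right Laurence-generator bookkeeping is in place, but the converse demands a uniform obstruction to all virtual characters, for which I would look to adapt whichever techniques are used to prove Theorem~\ref{Theorem 1.1}.
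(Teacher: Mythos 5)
Your two sufficiency arguments are essentially the paper's, just run one level up. The paper does everything inside the arithmetic quotient $Q$ of $\mathrm{SOut}^0(A_\Gamma)$, using its explicit Steinberg-type presentation (Theorem~\ref{PresentationQ}): for condition (1) it defines $f(E_{i,j})=1$ and $f=0$ on all other elementary generators and checks $\ker f=\langle\mathcal{E}\rangle$ via the commutator relations (Proposition~\ref{FibringQ}); for condition (2) it projects onto the product of diagonal blocks, discards the $\mathrm{SL}_{n\geq 3}(\mathbb{Z})$ factors, replaces each $\mathrm{SL}_2(\mathbb{Z})$ by $F_2$ and applies Bestvina--Brady (Proposition~\ref{VirtualFibringQ}). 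Fibring is then transported back to $\Out(A_\Gamma)$ along $\rho$ by Lemma~\ref{FibringComposition}/Corollary~\ref{VFibringComposition}, which requires the kernel of the epimorphism to be finitely generated; in your write-up of (2) you assert ``pulling back produces a virtual fibration'' without addressing this, and the point is not free: you need both that the unipotent part of $Q$ is finitely generated and that $\mathrm{IA}_\Gamma$ is finitely generated (Day, Wade). Working directly with Laurence generators and ``one commutator relation'' in $\Out(A_\Gamma)$ as you propose for (1) would force you to handle the full Day presentation, including partial conjugations; this is exactly what the passage to $Q$ avoids.

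The genuine gap is in the converse. First, you never invoke the fact that the absence of non-inner partial conjugations forces the absence of commutator transvections (\cite{Sale}, Lemma~2.4), hence $\mathrm{IA}_\Gamma=1$ and $Q\cong\mathrm{SOut}^0(A_\Gamma)$ is of \emph{finite index} in $\Out(A_\Gamma)$. This is the only reason the converse holds under that hypothesis: without it $Q$ is merely a quotient, and non-fibring of a quotient gives no obstruction for the group (Lemma~\ref{FibringFiniteIndex} is what lets you transfer non-fibring). Second, your proposed obstruction --- that the transvection kernel has ``no direct $\mathbb{Z}$-summand in its abelianization'' and that the normal closure of a preimage of a $\mathbb{Z}$-generator is not finitely generated --- is not the right statement and is not established by your outline. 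What is actually needed, and what the paper proves (Lemma~\ref{ComutatorQ}), is that when property (P2) fails \emph{every} off-diagonal elementary matrix $E_{i,j}$ lies in $[Q,Q]$ via the relation $E_{i,j}=[E_{i,k},E_{k,j}]$ supplied by the intermediate vertex; hence every character of every finite-index subgroup kills the entire unipotent part $P$ and factors through the product of diagonal blocks. From there the argument closes cleanly: the $\mathrm{SL}_{n}(\mathbb{Z})$ factors with $n\geq 3$ are not virtually indicable, and a single remaining $\mathrm{SL}_2(\mathbb{Z})$ factor is virtually a nonabelian free group, which never algebraically fibres; no appeal to property (T) or to normal closures is required. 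As written, your necessity argument would not compile into a proof without supplying both of these inputs.
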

The paper is structured as follows. In section $2$, we present the necessary definitions and preliminary results that will be used throughout the text. Section $3$ is devoted to the  study of virtual fibring, where we prove Theorems \ref{Theorem 1.5} and \ref{Theorem 1.7}. In section $4$, we study the $\ell^2$-Betti numbers of $\Aut(A_\Gamma)$ and $\Out(A_\Gamma)$, proving Theorem \ref{Theorem 1.1}. In the final section, we present several examples of RAAGs for which $\Aut(A_\Gamma)$ and $\Out(A_\Gamma)$ exhibit distinct fibring properties and $\ell^2$-Betti number behaviour.
\section{Preliminaries}
\subsection{$\ell^2$-Betti numbers}
Given a countable discrete group $G$ the $\ell^2$-Betti numbers $\beta_i^{(2)}(G)$ are powerful asymptotic invariants that capture geometric, topological, and analytic properties of the group. Since their introduction, $\ell^2$-Betti numbers have found significant applications in geometry, especially in the study of manifolds, in ergodic theory, operator algebras, and numerous areas of group theory. For a comprehensive introduction and a survey of applications, we refer the reader to the monographs by Kammeyer and Lück \cite{Kammeyer, Luck}.

In this section, we collect the main properties of $\ell^2$-Betti numbers that will be used throughout the paper.

We begin with a fundamental observation: the zeroth $\ell^2$-Betti number distinguishes between finite and infinite groups.
\begin{lemma}[\cite{Kammeyer}, Exercise 4.4.1]\label{LemmaZeroth}
	Let $G$ be a countable group. Then:
	\begin{enumerate}
		\item If $G$ is finite, then $\beta_0^{(2)}(G)=\frac{1}{\abs{G}}$ and $\beta_i^{(2)}(G)=0~\forall~i\geq1$.
		\item If $G$ is infinite, then $\beta_0^{(2)}(G)=0$.
\end{enumerate}
\end{lemma}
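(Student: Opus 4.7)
The plan is to reduce the statement to two standard tools from the $\ell^2$-theory: the identification of $\beta_0^{(2)}$ with the von Neumann dimension of the $G$-fixed vectors in $\ell^2(G)$, and the multiplicativity of $\ell^2$-Betti numbers under finite-index inclusions. Both are developed in \cite{Kammeyer}, so I would quote rather than reprove them.

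For the infinite case I would use the formula $\beta_0^{(2)}(G) = \dim_{\mathcal{N}(G)} \ell^2(G)^G$. A vector $f \in \ell^2(G)$ that is invariant under the left regular action is constant as a function on $G$, and when $G$ is infinite no nonzero constant is square-summable. Hence $\ell^2(G)^G = 0$ and $\beta_0^{(2)}(G) = 0$, as required.

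For the finite case I would apply the restriction principle: if $H \leq G$ has finite index then $\beta_i^{(2)}(H) = [G:H]\,\beta_i^{(2)}(G)$ for every $i$. Taking $H = \{1\}$ — legitimate precisely because $G$ is finite — reduces the problem to the trivial group, for which $\mathcal{N}(\{1\}) = \mathbb{C}$ acts on a point and so $\beta_0^{(2)}(\{1\}) = 1$ while $\beta_i^{(2)}(\{1\}) = 0$ for $i \geq 1$. Rearranging yields $\beta_0^{(2)}(G) = 1/|G|$ and $\beta_i^{(2)}(G) = 0$ for $i \geq 1$.

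The only real obstacle is invoking the two black-box inputs cleanly, and a fully self-contained alternative is available for the finite case. Directly: $\ell^2(G)^G$ is the one-dimensional line spanned by $\sum_{g \in G} g$, with orthogonal projection $p = \tfrac{1}{|G|}\sum_{g \in G} g \in \mathcal{N}(G)$; the canonical von Neumann trace evaluated on $p$ gives exactly $1/|G|$. For the higher Betti numbers, Maschke's theorem makes the trivial module $\mathbb{C}$ projective over $\mathbb{C}[G]$, so after passing from $\mathbb{Z}[G]$-coefficients to $\mathbb{C}[G]$-coefficients a resolution of $\mathbb{C}$ becomes split, forcing $H_i(G; \mathcal{N}(G)) = 0$ in positive degrees. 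I expect the write-up to be essentially one short paragraph per case, with the finite case being slightly more delicate only in the bookkeeping.
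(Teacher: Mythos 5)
Your argument is correct: the identification $\beta_0^{(2)}(G)=\dim_{\mathcal{N}(G)}\ell^2(G)^G$ handles the infinite case since no nonzero constant function is square-summable, and for finite $G$ either the finite-index multiplicativity applied to $H=\{1\}$ or the direct computation with the averaging projection $p=\frac{1}{|G|}\sum_{g\in G}g$ together with Maschke's theorem gives the stated values. The paper itself supplies no proof here --- it cites the statement as an exercise in Kammeyer's book --- so there is nothing to compare against; your write-up is a standard and complete justification.
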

Another key feature is that $\ell^2$-Betti numbers behave predictably with respect to finite-index subgroups.
\begin{proposition}[\cite{Kammeyer}, Theorem 4.15 (iii)]\label{FiniteIndexBetti}
	Let $H\leq G$ be a subgroup of finite index. Then, for all $n\geq 0$: $$\beta^{(2)}_n(G)=\frac{\beta^{(2)}_n(H)}{\abs{G:H}}$$
\end{proposition}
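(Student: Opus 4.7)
The plan is to realise both Betti numbers as von Neumann dimensions of the same underlying $\ell^2$-chain complex, and then to invoke the multiplicativity of the dimension under restriction along the natural inclusion $\mathcal{N}(H) \hookrightarrow \mathcal{N}(G)$ induced by $H \hookrightarrow G$.

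First I would fix a CW model $X$ for $K(G,1)$ (using the standard Cheeger--Gromov approximation by finite subcomplexes if $G$ admits no such model of finite type). Because $[G:H]$ is finite, the intermediate cover $\overline{X} := \widetilde{X}/H$ is a $K(H,1)$ whose universal cover is still $\widetilde{X}$. Consequently, the cellular $\ell^2$-chain complex $C^{(2)}_*(\widetilde{X})$ computes both $\beta^{(2)}_n(G)$ and $\beta^{(2)}_n(H)$; only the algebra used to measure its homology changes, from $\mathcal{N}(G)$ to $\mathcal{N}(H)$. The problem therefore reduces to proving the restriction identity
\[
\dim_{\mathcal{N}(H)} V \;=\; [G:H]\cdot \dim_{\mathcal{N}(G)} V
\]
for every $\mathcal{N}(G)$-module $V$, and then applying it to $V = H^{(2)}_n(\widetilde{X})$.

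For the restriction identity I would first verify the canonical case $V = \ell^2(G)$: fixing a transversal $T$ for the right $H$-cosets yields an orthogonal decomposition $\ell^2(G) = \bigoplus_{t \in T} \ell^2(H)\,t$ into $[G:H]$ copies of $\ell^2(H)$ as left Hilbert $\mathcal{N}(H)$-modules, so $\dim_{\mathcal{N}(H)} \ell^2(G) = [G:H] = [G:H] \cdot \dim_{\mathcal{N}(G)} \ell^2(G)$. At the trace level, this same decomposition realises $\mathcal{N}(G)$ inside the amplification $M_{[G:H]}(\mathcal{N}(H))$ in such a way that $[G:H]\cdot\tau_G$ agrees with the unnormalised sum-of-diagonals trace restricted to $\mathcal{N}(G)$. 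The identity then passes to every finitely generated projective $\mathcal{N}(G)$-module via the trace-on-idempotents description of the von Neumann dimension, and to arbitrary $\mathcal{N}(G)$-modules by the additivity on short exact sequences and cofinality (supremum over finitely generated projective submodules) properties of L\"uck's dimension function. Applying the resulting identity to $V = H^{(2)}_n(\widetilde{X})$ yields the claim.

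The main obstacle, and the only delicate point, is the restriction identity in full generality: since $\mathcal{N}(G)$ is not naively a free $\mathcal{N}(H)$-module, it cannot be obtained by a one-line base-change argument. The trace comparison above supplies the necessary quantitative link on canonical free modules, but extending it to all $\mathcal{N}(G)$-modules requires the full formalism of the L\"uck dimension; once this machinery is in place, the rest of the proof is essentially bookkeeping.
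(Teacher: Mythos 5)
The paper offers no proof of this proposition: it is imported verbatim from Kammeyer's monograph (Theorem 4.15(iii)), so there is no in-paper argument to compare against. Your sketch is precisely the standard proof found in that reference and in L\"uck's book: the common universal cover $\widetilde{X}$ of a $K(G,1)$ and of the intermediate $K(H,1)$ makes both Betti numbers dimensions of the same reduced $\ell^2$-homology, and the identity $\dim_{\mathcal{N}(H)}V=[G:H]\cdot\dim_{\mathcal{N}(G)}V$ is established first on $\ell^2(G)=\bigoplus_{t\in T}\ell^2(H)t$ via a coset transversal and a trace comparison, then propagated to finitely generated projectives by traces of idempotents and to arbitrary modules by additivity and cofinality of the extended dimension function. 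The argument is correct, and you rightly flag the extension of the restriction identity beyond free modules as the only genuinely delicate step.
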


We now record several important vanishing results.

\begin{proposition}\label{VanishingBetti}
	Let $G$ be a countable group. Then:
	\begin{enumerate}
		\item If $H\trianglelefteq G$ and $\beta_i^{(2)}(H)=0$ for all $i=0,\dots,n$ then $\beta_i^{(2)}(G)=0$ for all $i=0,\dots,n$.
		\item All \(\ell^2\)-Betti numbers of infinite amenable groups vanish.
		\item Let $1\to H\to G\to K\to 1$ be a short exact sequence of countable infinite groups such that $\beta_1^{(2)}(H)<\infty$. Then, $\beta_1^{(2)}(G)=0$.
	\end{enumerate}
\end{proposition}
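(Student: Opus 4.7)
All three statements are classical facts about $\ell^2$-Betti numbers from Lück's monograph \cite{Luck}; my plan is to follow the standard route via the Lyndon--Hochschild--Serre spectral sequence in $\ell^2$-homology, together with the Cheeger--Gromov theorem for the amenable case.

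For part $(1)$, I would apply the LHS-type spectral sequence associated with the extension $1 \to H \to G \to G/H \to 1$. Its $E_2$-page has rows controlled by the $\ell^2$-Betti numbers $\beta_q^{(2)}(H)$, and it converges, in Lück's dimension-theoretic sense, to the $\ell^2$-homology of $G$. The hypothesis on $H$ kills every $E_2^{p,q}$ with $q \le n$, so the corresponding portion of $E_\infty$ also vanishes and $\beta_i^{(2)}(G) = 0$ for $i \le n$. An inductive version of the argument, starting from the base case $n=0$ (where $H$ infinite forces $G$ infinite, and Lemma \ref{LemmaZeroth} gives $\beta_0^{(2)}(G) = 0$), also works. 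Part $(2)$ is the Cheeger--Gromov theorem, which I would quote directly; the usual proof uses a Følner exhaustion of the amenable group to approximate the relevant von Neumann dimensions by normalised ranks over finite sets, which then tend to zero in every degree when the group is infinite.

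For part $(3)$, since $G$, $H$ and $K := G/H$ are all infinite, Lemma \ref{LemmaZeroth} gives $\beta_0^{(2)}(H) = \beta_0^{(2)}(K) = 0$. The plan is then to read off the bound on $\beta_1^{(2)}(G)$ from the low-degree corner of the same spectral sequence: the $E_2$-terms contributing in total degree $1$ involve products pairing a $\beta_q^{(2)}(H)$ factor with one coming from $K$ in complementary degree, and these are forced to vanish by the combination of $\beta_0^{(2)}(K) = 0$, $\beta_0^{(2)}(H) = 0$, and the finiteness of $\beta_1^{(2)}(H)$, which is exactly what prevents an undefined $0 \cdot \infty$ from appearing.

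The main obstacle is making the spectral sequence argument rigorous in the $\ell^2$-setting: the $E_2$-page lives in the category of modules over the group von Neumann algebra $\mathcal{N}(G)$, and the bookkeeping on $E_\infty$ requires Lück's extended dimension function, together with its additivity in short exact sequences. In particular, the finiteness assumption $\beta_1^{(2)}(H) < \infty$ in part $(3)$ is indispensable for converting a vanishing factor on the $E_2$-page into a vanishing total dimension on $E_\infty$.
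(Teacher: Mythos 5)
Your plan is consistent with the paper, which offers no argument of its own and simply cites \cite{Luck}, Theorem 7.2 (1), (2), (7); the spectral-sequence and Cheeger--Gromov arguments you sketch are precisely the standard proofs behind those cited items, so there is no gap. One small imprecision worth fixing if you write this out: in part (3) the relevant $E_2$-terms are $H_p\bigl(K; H_q(H;\mathcal{N}(G))\bigr)$, not products of Betti numbers of $H$ and $K$; the vanishing of $E^2_{0,1}$ comes from the specific fact that the coinvariants of a module of \emph{finite} von Neumann dimension under an action of an \emph{infinite} group have dimension zero (this is where $\beta_1^{(2)}(H)<\infty$ and $|K|=\infty$ enter), rather than from a $0\cdot\infty$ cancellation of numerical invariants. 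For the purposes of this paper you could equally well just quote Lück's theorem, as the author does.
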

\begin{proof}
	 Follows from \cite[Theorem 7.2 (1), (2), (7)]{Luck}.
\end{proof}
There is also a Künneth-type formula that describes the behavior of $\ell^2$-Betti numbers under direct products:
\begin{theorem}[\cite{Kammeyer}, Theorem 4.15 (i)] \label{KunnethBetti}
	Let $G_1$ and $G_2$ be two countable groups. Then, for all $n\geq0$:
	$$\beta^{(2)}_n(G_1\times G_2)=\sum\limits_{i=0}^n\beta_i^{(2)}(G_1)\beta_{n-i}^{(2)}(G_2)$$
\end{theorem}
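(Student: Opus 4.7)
The plan is to compute both sides using the standard description
$$\beta_n^{(2)}(G) \;=\; \dim_{\mathcal{N}(G)} \operatorname{Tor}_n^{\mathbb{Z}[G]}\!\bigl(\mathcal{N}(G),\mathbb{Z}\bigr),$$
and to exploit the fact that the group von Neumann algebra of a direct product splits as a von Neumann tensor product, $\mathcal{N}(G_1\times G_2)\cong \mathcal{N}(G_1)\,\bar\otimes\,\mathcal{N}(G_2)$, which is a tracial (hence finite) von Neumann algebra whose trace is the product of the two traces on the factors.

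First I would choose free resolutions $F_\ast\to\mathbb{Z}$ of the trivial $\mathbb{Z}[G_1]$-module and $F'_\ast\to\mathbb{Z}$ of the trivial $\mathbb{Z}[G_2]$-module. Since $\mathbb{Z}[G_1\times G_2]\cong\mathbb{Z}[G_1]\otimes_{\mathbb{Z}}\mathbb{Z}[G_2]$, the ordinary total complex $F_\ast\otimes_\mathbb{Z} F'_\ast$ is a free resolution of $\mathbb{Z}$ over $\mathbb{Z}[G_1\times G_2]$. Applying $\mathcal{N}(G_1\times G_2)\otimes_{\mathbb{Z}[G_1\times G_2]}(-)$ yields, by the tensor product identification above, the double complex
$$D_{p,q} \;=\; \bigl(\mathcal{N}(G_1)\otimes_{\mathbb{Z}[G_1]}F_p\bigr)\,\bar\otimes\,\bigl(\mathcal{N}(G_2)\otimes_{\mathbb{Z}[G_2]}F'_q\bigr).$$
The next step is a Künneth-type argument: pass to the spectral sequence of the double complex, whose $E^2$-page is
$$E^2_{p,q}\;=\; \operatorname{Tor}_p^{\mathbb{Z}[G_1]}(\mathcal{N}(G_1),\mathbb{Z})\,\bar\otimes\,\operatorname{Tor}_q^{\mathbb{Z}[G_2]}(\mathcal{N}(G_2),\mathbb{Z}),$$
converging to $\operatorname{Tor}_{p+q}^{\mathbb{Z}[G_1\times G_2]}(\mathcal{N}(G_1\times G_2),\mathbb{Z})$.

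Finally I would invoke the two fundamental properties of Lück's extended dimension function over a finite von Neumann algebra: additivity over short exact sequences (so that spectral-sequence differentials preserve total dimension), and multiplicativity under the von Neumann tensor product, namely $\dim_{\mathcal{M}_1\bar\otimes\mathcal{M}_2}(M\,\bar\otimes\,N)=\dim_{\mathcal{M}_1}(M)\cdot\dim_{\mathcal{M}_2}(N)$ for arbitrary (algebraic) modules. Combining these with the identification of the $E^2$-page gives
$$\beta_n^{(2)}(G_1\times G_2)\;=\;\sum_{i=0}^n \dim_{\mathcal{N}(G_1)}H_i(\cdots)\cdot \dim_{\mathcal{N}(G_2)}H_{n-i}(\cdots)\;=\;\sum_{i=0}^n \beta_i^{(2)}(G_1)\,\beta_{n-i}^{(2)}(G_2).$$

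The main obstacle is not the formal algebraic manipulation but the analytic input: establishing that Lück's dimension is genuinely multiplicative under the von Neumann tensor product for non-finitely-generated modules, and that it remains additive on the pages of the spectral sequence. This rests on approximating arbitrary modules by finitely generated projective ones and on the fact that on such projectives multiplicativity reduces to the product rule for traces $\operatorname{tr}_{\mathcal{N}(G_1)\bar\otimes\mathcal{N}(G_2)}=\operatorname{tr}_{\mathcal{N}(G_1)}\otimes\operatorname{tr}_{\mathcal{N}(G_2)}$. Once these ingredients are in place, the Künneth formula reduces, as sketched, to the standard spectral-sequence computation.
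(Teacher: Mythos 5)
The paper offers no proof of this statement: it is imported verbatim from Kammeyer's book (Theorem 4.15(i)), so there is no internal argument to compare against. Your sketch is the standard textbook route (essentially Lück's proof of the K\"unneth formula for $\ell^2$-Betti numbers), and in outline it is the right one. One step, however, is stated more strongly than what is true or needed: the identification of the $E^2$-page of the double complex as the honest module-theoretic tensor product $\operatorname{Tor}_p^{\mathbb{Z}[G_1]}(\mathcal{N}(G_1),\mathbb{Z})\,\bar\otimes\,\operatorname{Tor}_q^{\mathbb{Z}[G_2]}(\mathcal{N}(G_2),\mathbb{Z})$ requires a flatness input that is not available over group von Neumann algebras, and likewise multiplicativity of L\"uck's extended dimension under $\bar\otimes$ for \emph{arbitrary} algebraic modules is not something the literature establishes (nor is it needed). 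The standard proof sidesteps both issues by first proving the K\"unneth formula for finitely generated Hilbert $\mathcal{N}(G)$-chain complexes, where weak exactness and the product trace give the equality of \emph{dimensions} (not of modules) on the spectral sequence pages, and then passing to arbitrary countable groups by exhausting a model of $EG$ by cocompact subcomplexes and invoking the cofinality and continuity properties of the extended dimension function. If you replace the module-level $E^2$ identification and the ``arbitrary modules'' multiplicativity claim by this dimension-level argument with approximation, your proof becomes the standard one.
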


An important well-known fact that we will use is that the first $\ell^2$-Betti number of a finitely generated group is always finite. Since we could not find a proof in the literature stated, we include a short argument. The proof relies only on standard properties of $\ell^2$-homology and von Neumann dimension (cf.\ \cite{Luck}).

\begin{lemma}
	Let $G$ be an infinite finitely generated group. Then $\beta_1^{(2)}(G)\le d(G)-1,$
	where $d(G)$ denotes the minimal number of generators of $G$.
\end{lemma}

\begin{proof}
	If $d=d(G)$, we may construct a $K(G,1)$-complex by taking a bouquet of $d$ circles and attaching higher-dimensional cells. Passing to the universal cover, we obtain a cellular chain complex of Hilbert $\mathcal{N}(G)$-modules of the form
	$$
	\cdots \xrightarrow{\partial_2} \ell^2(G)^d \xrightarrow{\partial_1} \ell^2(G) \to 0.
	$$
	The first $\ell^2$-Betti number is given by $\beta_1^{(2)}(G)
	= \dim_{\mathcal{N}(G)}(\ker \partial_1)
	- \dim_{\mathcal{N}(G)}(\overline{\mathrm{Im}(\partial_2)})$.
	Using the additivity of the von Neumann dimension in weakly exact sequences
	(cf.\ \cite[Theorem~1.12(2)]{Luck}), we have
	$$
	\dim_{\mathcal{N}(G)}(\ker \partial_1)
	= \dim_{\mathcal{N}(G)}(\ell^2(G)^d)
	- \dim_{\mathcal{N}(G)}(\overline{\mathrm{Im}(\partial_1)}).
	$$
	Substituting, we obtain
	$$
	\beta_1^{(2)}(G)
	= \dim_{\mathcal{N}(G)}(\ell^2(G)^d)
	- \dim_{\mathcal{N}(G)}(\overline{\mathrm{Im}(\partial_1)})
	- \dim_{\mathcal{N}(G)}(\overline{\mathrm{Im}(\partial_2)}).
	$$
	
	Since $G$ is infinite, the zeroth $\ell^2$-Betti number vanishes by Lemma~\ref{LemmaZeroth}, so the complex is exact at $\ell^2(G)$. Thus
	$$
	\dim_{\mathcal{N}(G)}(\overline{\mathrm{Im}(\partial_1)})
	= \dim_{\mathcal{N}(G)}(\ell^2(G))
	= 1.
	$$
	Finally, $\beta_1^{(2)}(G)
	= d - 1 - \dim_{\mathcal{N}(G)}(\overline{\mathrm{Im}(\partial_2)})
	\le d-1$, as required.
\end{proof}

We end this section by recording the known values of $\ell^2$-Betti numbers for the arithmetic groups $\Aut(\mathbb{Z}^n)=\mathrm{GL}_n(\mathbb{Z})$, which will play a key role later in the paper.
\begin{theorem}\label{BettiGL(Z)}
	If $k=1$ then $\mathrm{GL}_1(\mathbb{Z})=\mathbb{Z}/2\mathbb{Z}$ so $\beta_0^{(2)}(\mathrm{GL}_1(\mathbb{Z}))=\frac{1}{2}$ and $\beta_{n}^{(2)}(G)=0$ for all $n\geq 1$. If $k = 2$ then $\beta_1^{(2)}(\mathrm{GL}_2(\mathbb{Z})) = \frac{1}{24}$ and $\beta_n^{(2)}(\mathrm{GL}_2(\mathbb{Z})) = 0$ for all $n \neq 1$. If $k\geq 3$ then all $\ell^2$-Betti numbers of $\mathrm{GL}_k(\mathbb{Z})$ vanish.
\end{theorem}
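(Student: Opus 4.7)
The plan is to handle each case separately, using progressively heavier machinery.

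For $k=1$, the group $\mathrm{GL}_1(\mathbb{Z}) = \{\pm 1\}$ has order $2$, so the claim is a direct application of Lemma~\ref{LemmaZeroth}.

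For $k=2$, the strategy is to first compute the $\ell^2$-Betti numbers of $\mathrm{SL}_2(\mathbb{Z})$ and then pass to $\mathrm{GL}_2(\mathbb{Z})$ via Proposition~\ref{FiniteIndexBetti}, using the index-$2$ inclusion $\mathrm{SL}_2(\mathbb{Z})\leq\mathrm{GL}_2(\mathbb{Z})$. The key input is the well-known amalgamated product decomposition $\mathrm{SL}_2(\mathbb{Z})\cong \mathbb{Z}/4\ast_{\mathbb{Z}/2}\mathbb{Z}/6$, which exhibits $\mathrm{SL}_2(\mathbb{Z})$ as the fundamental group of a graph of finite groups and in particular as virtually free. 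From the Mayer--Vietoris long exact sequence in $\ell^2$-homology (or equivalently the formula for $\ell^2$-Betti numbers of amalgamated products over finite subgroups, see \cite[Theorem~6.7]{Luck}) together with Lemma~\ref{LemmaZeroth}, one obtains
\begin{align*}
\beta_1^{(2)}(\mathrm{SL}_2(\mathbb{Z}))&=\beta_0^{(2)}(\mathbb{Z}/2)-\beta_0^{(2)}(\mathbb{Z}/4)-\beta_0^{(2)}(\mathbb{Z}/6)\\
&=\tfrac{1}{2}-\tfrac{1}{4}-\tfrac{1}{6}=\tfrac{1}{12},
\end{align*}
while $\beta_n^{(2)}(\mathrm{SL}_2(\mathbb{Z}))=0$ for $n\geq 2$ because $\mathrm{SL}_2(\mathbb{Z})$ is virtually free and hence of virtual cohomological dimension $1$, and $\beta_0^{(2)}(\mathrm{SL}_2(\mathbb{Z}))=0$ by Lemma~\ref{LemmaZeroth} since $\mathrm{SL}_2(\mathbb{Z})$ is infinite. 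Dividing by the index $2$ gives $\beta_1^{(2)}(\mathrm{GL}_2(\mathbb{Z}))=\tfrac{1}{24}$ and vanishing in all other degrees.

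For $k\geq 3$, the main obstacle is that $\mathrm{GL}_k(\mathbb{Z})$ is no longer virtually free, and no elementary decomposition is available. Here the plan is to invoke the general vanishing theorem of Borel for $\ell^2$-cohomology of arithmetic lattices \cite[Theorem~5.12]{Luck}: for an irreducible lattice $\Gamma$ in a connected semisimple Lie group $G$ with maximal compact subgroup $K$, all $\ell^2$-Betti numbers of $\Gamma$ vanish whenever the fundamental rank $\delta(G)=\mathrm{rank}_{\mathbb{C}}(G)-\mathrm{rank}_{\mathbb{C}}(K)$ is strictly positive. Taking $G=\mathrm{SL}_k(\mathbb{R})$ and $K=\mathrm{SO}(k)$, one has $\mathrm{rank}_{\mathbb{C}}(G)=k-1$ and $\mathrm{rank}_{\mathbb{C}}(K)=\lfloor k/2\rfloor$, so $\delta(G)=\lceil k/2\rceil-1\geq 1$ for $k\geq 3$. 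Hence $\beta_n^{(2)}(\mathrm{SL}_k(\mathbb{Z}))=0$ for all $n$, and applying Proposition~\ref{FiniteIndexBetti} to the index-$2$ subgroup $\mathrm{SL}_k(\mathbb{Z})\leq\mathrm{GL}_k(\mathbb{Z})$ yields the vanishing for $\mathrm{GL}_k(\mathbb{Z})$. The substantive content lies entirely in citing this higher-rank vanishing result; the rest is bookkeeping.
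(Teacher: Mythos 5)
Your argument is correct, and for $k=2$ it takes a genuinely different route from the paper. The paper simply quotes Newman's result that $F_2$ embeds in $\mathrm{GL}_2(\mathbb{Z})$ as a subgroup of index $24$ and applies Proposition~\ref{FiniteIndexBetti} together with $\beta_1^{(2)}(F_2)=1$; you instead decompose $\mathrm{SL}_2(\mathbb{Z})\cong\mathbb{Z}/4\ast_{\mathbb{Z}/2}\mathbb{Z}/6$ and run the $\ell^2$ Mayer--Vietoris sequence to get $\beta_1^{(2)}(\mathrm{SL}_2(\mathbb{Z}))=\tfrac{1}{2}-\tfrac{1}{4}-\tfrac{1}{6}=\tfrac{1}{12}$ before halving. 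Both computations are standard and mutually consistent (indeed $F_2$ has index $12$ in $\mathrm{SL}_2(\mathbb{Z})$); the paper's version needs only the finite-index multiplicativity already recorded as Proposition~\ref{FiniteIndexBetti}, whereas yours imports the amalgamated-product (graph-of-finite-groups) formula, which is not among the paper's preliminaries, so you should cite it precisely if you keep this route. For $k\geq 3$ the substance is the same in both proofs, namely Borel's higher-rank vanishing for the symmetric space of $\mathrm{SL}_k(\mathbb{R})$, but note that $\mathrm{SL}_k(\mathbb{Z})$ is a \emph{non-uniform} lattice: the paper bridges this gap by invoking measure equivalence of all lattices in a given locally compact group together with the invariance of vanishing of $\ell^2$-Betti numbers under measure equivalence, while you apply a lattice version of the vanishing theorem directly; make sure the version you cite covers non-cocompact lattices (Borel's 1985 paper, which this article references, does). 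The $k=1$ case is identical in both treatments.
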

\begin{proof}
For $k=1$ the result follows from Lemma \ref{LemmaZeroth} For $k=2$ it is known that $F_2$ embeds into $\mathrm{GL}_2(\mathbb{Z})$ as a subgroup of index $24$ (c.f. \cite{Newmann}). Since $\beta_1^{(2)}(F_2)=1$ and  $\beta_n^{(2)}(F_2)=0$ for all $n\neq1$ the result follows from  Proposition \ref{FiniteIndexBetti}. For $k\geq 3$ the result follows from the fact that all latices in a given locally compact group are measure equivalent, that the vanishing of $\ell^2$-Betti numbers is invariant under measured equivalence (c.f. \cite{Gaboriau 2}) and that $\beta_n^{(2)}(\mathrm{GL}_k(\mathbb{R}))=0$ for all $n\geq 0$  (c.f. \cite{Borel} and \cite{Luck} Theorem 7.34).
\end{proof}
\subsection{Algebraic fibring}
A group $G$ is said to \textbf{fibre algebraically} if there exists a non-trivial epimorphism $\varphi: G\to\mathbb{Z}$ with finitely generated kernel. By Proposition \ref{VanishingBetti} it follows that if $G$ virtually fibres algebraically then $\beta_1^{(2)}(G)=0$. 

We now state and prove some easy properties that will be needed later.
\begin{lemma}\label{FibringFiniteIndex}
Let $G$ be a group and $H\leq G$ be a finite index subgroup. Then, $G$ is virtually fibred if and only if $H$ is virtually fibred.
\end{lemma}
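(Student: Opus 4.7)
The plan is to prove the two directions separately, with the forward direction being the substantive one. The backward direction is immediate: if $H' \leq H$ has finite index in $H$ and fibres, then $H'$ automatically has finite index in $G$, so $G$ virtually fibres.

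For the forward direction, suppose $G$ is virtually fibred via some finite-index subgroup $G' \leq G$ equipped with a surjection $\varphi : G' \twoheadrightarrow \mathbb{Z}$ whose kernel is finitely generated. I would set $K := G' \cap H$ and show that $K$ fibres, which suffices since $[H : K] \leq [G : G'] < \infty$. The index computation $[G' : K] \leq [G : H] < \infty$ shows that $K$ has finite index in $G'$ as well.

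The key observation is that the restriction $\varphi|_K$ must be nontrivial. Indeed, $\ker \varphi$ has infinite index in $G'$ (since $G'/\ker\varphi \cong \mathbb{Z}$), while $K$ has finite index in $G'$, so $K \not\subseteq \ker\varphi$. Consequently $\varphi(K) = n\mathbb{Z}$ for some integer $n \geq 1$, and composing with multiplication by $1/n$ yields a surjection $\psi : K \twoheadrightarrow \mathbb{Z}$ with $\ker \psi = \ker \varphi \cap K = \ker \varphi \cap H$ (using $\ker\varphi \subseteq G'$).

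It remains to verify that $\ker \psi$ is finitely generated. Since $H$ has finite index in $G$, its intersection $\ker\varphi \cap H$ has finite index in $\ker\varphi$, and a finite-index subgroup of a finitely generated group is finitely generated (by the Reidemeister--Schreier argument). Thus $\psi$ exhibits $K$ as a fibred group, completing the proof. The only point requiring any care is the nontriviality of the restriction, which is handled by the infinite-versus-finite index comparison above; no genuine obstacle arises.
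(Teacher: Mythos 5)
Your proposal is correct and follows essentially the same route as the paper: both directions are handled identically, with the forward direction passing to $K = G'\cap H$, rescaling the restricted character onto $\mathbb{Z}$, and observing that its kernel is a finite-index (hence finitely generated) subgroup of $\ker\varphi$. Your explicit justification that $\varphi|_K$ is nontrivial (via the infinite index of $\ker\varphi$ in $G'$) is a small point the paper leaves implicit, but the argument is the same.
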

\begin{proof}
	If $H$ is virtually algebraically fibred, then so is $G$ trivially. We may thus assume that $G$ is virtually fibred. Then, there exists a finite index subgroup $G_1\leq G$ and an epimorphism $\varphi:G_1\to\mathbb{Z}$ with finitely generated kernel. Set $H_1=G_1\cap H$, which is a finite index subgroup of both $G_1$ and $H$. Let $\varphi_1:H_1\to\mathbb{Z}$ be the restriction of $\varphi$ to $H_1$. Since $H_1$ is of finite index in $G_1$, the image $\varphi(H_1)$ is a finite index subgroup of $\mathbb{Z}$, hence it is isomorphic to $n\mathbb{Z}$ for some integer $n\geq 1$. Define $\varphi_2:H_1\to\mathbb{Z}$ as the composition of $\varphi_1$ with the isomorphism $n\mathbb{Z}\to\mathbb{Z}$ given by $x\mapsto\frac{x}{n}$. Then, $\varphi_2$ is an epimorphism such that $\ker(\varphi_2)=\ker(\varphi)\cap H_1$. Hence, $\ker(\varphi_2)$ has finite index in $\ker(\varphi)$, so $\ker(\varphi_2)$ is finitely generated. Thus, $H_1$ fibres algebraically, so $H$ virtually fibres.
\end{proof}
\begin{lemma}\label{FibringComposition}
	Let $G$ and $H$ be two groups. Suppose that $G$ is algebraically fibred and that there exists an epimorphism $\varphi:H\to G$ with finitely generated kernel. Then, $H$ is also algebraically fibred.
\end{lemma}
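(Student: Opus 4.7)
The plan is to construct the algebraic fibration of $H$ by composing $\varphi$ with a fibration of $G$, and then to verify that the resulting kernel is finitely generated by viewing it as an extension of two finitely generated groups.

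In more detail, since $G$ fibres algebraically, pick an epimorphism $\psi \colon G \to \mathbb{Z}$ with finitely generated kernel $K = \ker(\psi)$. Define $\Phi = \psi \circ \varphi \colon H \to \mathbb{Z}$. As the composition of two surjections, $\Phi$ is surjective, and it is non-trivial since its image is $\mathbb{Z}$. So $\Phi$ is a candidate algebraic fibration of $H$, and the only thing left to check is that its kernel is finitely generated.

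The kernel of $\Phi$ is $\varphi^{-1}(K)$. Restricting $\varphi$ to this preimage yields the short exact sequence
\begin{equation*}
1 \longrightarrow \ker(\varphi) \longrightarrow \varphi^{-1}(K) \longrightarrow K \longrightarrow 1.
\end{equation*}
By hypothesis $\ker(\varphi)$ is finitely generated, and $K$ is finitely generated because $G$ fibres. The next step is to invoke the standard fact that any group extension of a finitely generated group by a finitely generated group is itself finitely generated: concretely, lifting a finite generating set of $K$ to $\varphi^{-1}(K)$ and combining this lift with a finite generating set of $\ker(\varphi)$ yields a finite generating set of $\varphi^{-1}(K)$. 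Hence $\ker(\Phi)$ is finitely generated, so $\Phi$ witnesses an algebraic fibration of $H$.

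There is no real obstacle here; the statement is a routine closure property. The only point worth being careful about is the short exact sequence argument for finite generation of an extension, but that is entirely standard.
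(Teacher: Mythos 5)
Your proposal is correct and follows essentially the same route as the paper: both consider the composition $\psi\circ\varphi$ and verify that its kernel is finitely generated by combining a finite generating set of $\ker(\varphi)$ with lifts of a finite generating set of $\ker(\psi)$; your short-exact-sequence phrasing is just a repackaging of the paper's explicit generating set $\lbrace\overline{g_1},\dots,\overline{g_n},h_1,\dots,h_r\rbrace$.
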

\begin{proof}
	Since $G$ is algebraically fibred there exists an epimorphism $\psi:G\to\mathbb{Z}$ whose kernel is finitely generated. Let us denote the generators of $\ker(\psi)$ by $\lbrace g_1,\dots,g_n\rbrace$. For each $i=1,\dots,n$ choose $\overline{g_i}\in H$ such that $\varphi(\overline{g_i})=g_i$. By hypothesis, $\ker(\varphi)$ is finitely generated, say by $\lbrace h_1,\dots,h_r\rbrace$. Consider the composition $f=\psi\circ\varphi:H\to\mathbb{Z}$, then $\ker(f)$ is generated by the set $\lbrace\overline{g_1},\dots,\overline{g_n},h_1,\dots,h_r\rbrace$. 
\end{proof}

\begin{corollary}\label{VFibringComposition}
	Let $G$ and $H$ be two groups such that $G$ virtually fibres. If there exists an epimorphism $\varphi: H\to G$ with finitely generated kernel, then $H$ is also virtually fibred.
\end{corollary}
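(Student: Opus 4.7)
The plan is to deduce this corollary immediately from the two preceding lemmas by pulling back the finite-index fibring subgroup of $G$ to $H$.

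Since $G$ virtually fibres, I would pick a finite-index subgroup $G_1 \leq G$ that algebraically fibres. The natural candidate on the $H$-side is the preimage $H_1 := \varphi^{-1}(G_1)$. Because $\varphi$ is surjective, the map $H/H_1 \to G/G_1$ induced by $\varphi$ is a bijection, so $[H:H_1] = [G:G_1] < \infty$. Thus $H_1$ has finite index in $H$, and it suffices by Lemma \ref{FibringFiniteIndex} to show that $H_1$ is algebraically fibred (or even just virtually so).

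Next I would consider the restriction $\varphi|_{H_1}: H_1 \to G_1$. It is surjective onto $G_1$ by construction, and its kernel is exactly $\ker(\varphi)$, which is finitely generated by hypothesis. Since $G_1$ is algebraically fibred, Lemma \ref{FibringComposition} applied to the epimorphism $\varphi|_{H_1}$ yields that $H_1$ is algebraically fibred. Combining this with Lemma \ref{FibringFiniteIndex} shows that $H$ virtually fibres.

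There is no real obstacle here: the only point that needs a moment's care is verifying that $\varphi^{-1}(G_1)$ has the same finite index in $H$ as $G_1$ has in $G$, and this is immediate from surjectivity of $\varphi$. The rest is just a direct combination of Lemmas \ref{FibringFiniteIndex} and \ref{FibringComposition}.
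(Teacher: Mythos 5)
Your argument is correct and follows essentially the same route as the paper: pull back the fibring finite-index subgroup $G_1$ to $H_1=\varphi^{-1}(G_1)$, restrict $\varphi$, and apply Lemma \ref{FibringComposition}. Your observation that $\ker(\varphi|_{H_1})$ is exactly $\ker(\varphi)$ (since $\ker(\varphi)\subseteq H_1$) is in fact slightly cleaner than the paper's remark that it is merely a finite-index subgroup of $\ker(\varphi)$, but the proofs are otherwise identical.
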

\begin{proof}
	Let $G_1$ be a finite index subgroup of $G$ that fibres, with fibration $f: G_1\to\mathbb{Z}$. Consider the preimage $H_1=\varphi^{-1}(G_1)$, which is a finite index subgroup of $H$. Let $\varphi_1:H_1\to G_1$ be the restriction of $\varphi$ to $H_1$. Then, $\ker(\varphi_1)=\ker(\varphi)\cap H_1$, which is a finite index subgroup of the finitely generated group $\ker(\varphi)$. Hence, $\ker(\varphi_1)$ is finitely generated. Now, since $G_1$ fibres and there is an epimorphism $\varphi_1:H_1\to G_1$ with finitely generated kernel, it follows from Lemma \ref{FibringComposition} that $H_1$ is algebraically fibred. Therefore, $H$ virtually fibres.
\end{proof}
\subsection{Right-angled Artin groups}
Let us recall some results and definitions about right-angled Artin groups that will be used in the paper.

\begin{definition} Let $\Gamma$ be a simplicial graph and $\Delta\subset\Gamma$ a subgraph. We say that $\Delta$ is an \textbf{induced subgraph} if every $v,w\in V(\Delta)$ which are connected in $\Gamma$ are also connected in $\Delta$. A subgroup $A_\Delta$ of $A_\Gamma$ generated by an induced subgraph  $\Delta\subset\Gamma$ is said to be a \textbf{special subgroup} of $A_\Delta$. If $v\in V(\Gamma)$, the \textbf{link} $\mathrm{lk}(v)$ is the induced subgraph of $\Gamma$ spanned by the vertices adjacent to $v$ in $\Gamma$. The \textbf{star} $\mathrm{st}(v)$ is the induced subgraph of $\Gamma$ spanned by the vertices of $\mathrm{lk}(v)\cup\lbrace v\rbrace$.
\end{definition}
\begin{proposition}[\cite{Charney 4} Proposition 2.2]\label{CentreRAAG}
	Let $A_\Gamma$ be a RAAG. Then, the centre $\mathrm{Z}(A_\Gamma)$ of $A_\Gamma$ is isomorphic to the RAAG $A_\Delta$ where $\Delta$ is the induced subgraph of $\Gamma$ spanned by the vertices $v\in V(\Gamma)$ such that $\st(v)=\Gamma$.
\end{proposition}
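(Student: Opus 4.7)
The plan is to prove the two inclusions $A_\Delta \subseteq Z(A_\Gamma)$ and $Z(A_\Gamma) \subseteq A_\Delta$ separately, where $\Delta$ is the induced subgraph on $S = \{v \in V(\Gamma) : \st(v) = \Gamma\}$. The forward inclusion is almost tautological: if $v \in S$, then every vertex of $\Gamma$ lies in $\st(v)$, so $v$ commutes with every generator of $A_\Gamma$ in the defining presentation. Since the generators of $A_\Delta$ are exactly the elements of $S$ and they each commute with all generators of $A_\Gamma$, we get $A_\Delta \subseteq Z(A_\Gamma)$. The fact that $A_\Delta$ is really the \emph{special} subgroup on $\Delta$ (and not some proper quotient) is standard: special subgroups of RAAGs are again RAAGs on the induced subgraph.

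For the reverse inclusion I would use the normal form theorem for RAAGs. Take $g \in Z(A_\Gamma)$ and write $g$ as a reduced word $w = x_1^{\varepsilon_1}\cdots x_n^{\varepsilon_n}$ in the vertex generators, reduced in the sense of Servatius (no two cancelable letters can be shuffled together). It is enough to prove that every letter $x_i$ appearing in such a normal form lies in $S$. Fix a vertex $v \in V(\Gamma)$. Since $vgv^{-1} = g$, the word $vwv^{-1}$ represents the same element as $w$, and one compares their normal forms. The standard consequence of the solution to the word problem (or, equivalently, Servatius's Centralizer Theorem) is that $v$ must commute with each letter $x_i$ in the support of $w$; otherwise $vwv^{-1}$ and $w$ would differ in normal form. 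This holds for every $v \in V(\Gamma)$, so for every $i$ we have $x_i$ adjacent in $\Gamma$ to (or equal to) every vertex, i.e.\ $\st(x_i) = \Gamma$ and $x_i \in V(\Delta)$. Hence $g \in A_\Delta$.

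The main obstacle, and really the only non-cosmetic step, is justifying the claim that commutation in $A_\Gamma$ with each vertex generator forces the \emph{letters} of a normal-form representative to individually commute with that generator. For this I would appeal to the well-known characterization of centralizers in RAAGs: if $g$ has support $\mathrm{supp}(g) \subseteq V(\Gamma)$ and $vgv^{-1} = g$, then either $v \in \mathrm{supp}(g)$ and $v$ commutes with every other letter of $\mathrm{supp}(g)$, or $v \notin \mathrm{supp}(g)$ and $v$ commutes with every letter of $\mathrm{supp}(g)$; in either case $v$ is adjacent to each $x_i$ (or equal to it). Cycling over all $v \in V(\Gamma)$ then yields the conclusion. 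This fact is a direct corollary of the normal form / pilings machinery (Servatius, or Hermiller–Meier) and can simply be cited. With these two inclusions and the identification of the subgroup generated by $S$ with $A_\Delta$ via the standard theorem on special subgroups, the proof is complete.
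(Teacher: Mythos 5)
Your proof is correct. The paper does not actually prove this proposition --- it is quoted verbatim from Charney--Crisp--Vogtmann \cite{Charney 4} --- but your argument is the standard one: the inclusion $A_\Delta\subseteq \mathrm{Z}(A_\Gamma)$ is immediate from the presentation, and the reverse inclusion follows from the fact that the centralizer of a vertex generator $v$ is the special subgroup $A_{\st(v)}$, so a central element has support contained in $\bigcap_{v\in V(\Gamma)}\st(v)=V(\Delta)$. The one input you leave as a citation (Servatius's Centralizer Theorem, or directly the computation of the centralizer of a generator) is exactly the right tool, and the identification of $\langle V(\Delta)\rangle$ with the RAAG $A_\Delta$ via the theorem on special subgroups closes the argument.
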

Free products and direct products of RAAGs are easy to understand in terms of the defining graph $\Gamma$: $A_\Gamma$ splits as a non trivial direct product $A_{\Gamma_1}\times A_{\Gamma_2}$ if and only if $\Gamma$ decomposes as a join $\Gamma_1\ast\Gamma_2$ and  $A_\Gamma$ splits as a non trivial free product $A_{\Gamma_1}\ast A_{\Gamma_2}$ if and only if $\Gamma$ decomposes as a disjoint union $\Gamma_1\sqcup\Gamma_2$. Both facts follows from the isomorphism theorem for RAAGs, i.e. $A_\Gamma\cong A_\Theta$ if and only if $\Gamma\cong\Theta$. A proof of this result may be found in \cite{Laurence}.

 The $\ell^2$-Betti numbers of a RAAG are well-known:
\begin{theorem}[Davis-Leary \cite{Davis}]\label{BettiRAAG} Let $A_\Gamma$ be a RAAG. Then:
	$$\beta_{i+1}^{(2)}(A_\Gamma)=\overline{\beta}_i(\widehat{\Gamma})$$
	Here $\overline{\beta}_i$ represents the reduced $i$-th Betti number and $\widehat{\Gamma}$ is the flag complex constructed by adding an $(n-1)$-simplex for each $n$-clique of $\Gamma$.
\end{theorem}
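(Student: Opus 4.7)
The plan is to work with the Salvetti complex $S_\Gamma$, a finite $K(A_\Gamma,1)$ obtained by gluing a standard $k$-torus $T^\sigma$ for every $k$-clique $\sigma$ of $\Gamma$. Its cellular structure has one $0$-cell and one $k$-cell per $k$-clique, so the cells in positive dimensions are indexed by the simplices of the flag complex $L:=\widehat{\Gamma}$. The universal cover $\widetilde{S_\Gamma}$ is a $\mathrm{CAT}(0)$ cube complex on which $A_\Gamma$ acts freely, properly and cocompactly, so $\beta_k^{(2)}(A_\Gamma) = \dim_{\mathcal{N}(A_\Gamma)} H_k^{(2)}(\widetilde{S_\Gamma})$.

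I would prove the formula by strong induction on $|V(\Gamma)|$. The base cases are immediate: $\Gamma = \emptyset$ gives the trivial group, with $\beta_0^{(2)}=1=\overline{\beta}_{-1}(\emptyset)$, and $\Gamma=\{v\}$ gives $\mathbb{Z}$, which is infinite amenable, so all its $\ell^2$-Betti numbers vanish and match $\overline{\beta}_i(\mathrm{pt})=0$ for $i\geq 0$.

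For the inductive step I would split into three cases. If $\Gamma=\Gamma_1\sqcup\Gamma_2$ is disconnected, then $A_\Gamma=A_{\Gamma_1}\ast A_{\Gamma_2}$ and $\widehat{\Gamma}=\widehat{\Gamma_1}\sqcup\widehat{\Gamma_2}$; the standard free-product formula for $\ell^2$-Betti numbers matches the (degree-shifted) additivity of reduced homology under disjoint union. If $\Gamma=\Gamma_1\ast\Gamma_2$ is a non-trivial join, then $A_\Gamma=A_{\Gamma_1}\times A_{\Gamma_2}$ and $\widehat{\Gamma}=\widehat{\Gamma_1}\ast\widehat{\Gamma_2}$; applying the Künneth formula (Theorem~\ref{KunnethBetti}) on the group side matches the simplicial join identity $\overline{H}_n(X\ast Y)=\bigoplus_{i+j=n-1}\overline{H}_i(X)\otimes\overline{H}_j(Y)$ on the topological side, and the induction closes term-by-term. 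Otherwise I would fix a vertex $v$ with $\st(v)\subsetneq\Gamma$ and decompose $A_\Gamma = A_{\Gamma\setminus v} \ast_{A_{\lk(v)}} (A_{\lk(v)}\times\mathbb{Z})$, mirrored simplicially by the pushout $L=(L\setminus v)\cup\st_L(v)$ in which $\st_L(v)$ is the cone on $\lk_L(v)$.

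The hardest part will be this third case. The $\ell^2$-Mayer--Vietoris sequence attached to an amalgamated product is only weakly exact, so in general only alternating sums of von Neumann dimensions are guaranteed additive. What rescues the argument is that the amalgamated subgroup $A_{\lk(v)}\times\mathbb{Z}$ contains the infinite amenable factor $\mathbb{Z}$, so by the Künneth formula together with Proposition~\ref{VanishingBetti}(2) all of its $\ell^2$-Betti numbers vanish. Feeding this vanishing into the Mayer--Vietoris sequence collapses the relevant connecting maps to isomorphisms and expresses $\beta_k^{(2)}(A_\Gamma)$ purely in terms of $\beta_k^{(2)}(A_{\Gamma\setminus v})$ and $\beta_{k-1}^{(2)}(A_{\lk(v)})$, which by the inductive hypothesis equal $\overline{\beta}_{k-1}(\widehat{\Gamma\setminus v})$ and $\overline{\beta}_{k-2}(\widehat{\lk(v)})$. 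These assemble, via the simplicial Mayer--Vietoris sequence together with the contractibility of the cone $\st_L(v)$, into $\overline{\beta}_{k-1}(\widehat{\Gamma})$, closing the induction. A small bookkeeping check is required in degree zero to account for the shift between unreduced and reduced Betti numbers arising from the Mayer--Vietoris term $\mathbb{Z}$ in the bottom of the simplicial sequence.
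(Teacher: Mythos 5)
The paper itself offers no proof of this statement (it is imported verbatim from Davis--Leary), so I am judging your argument against the original. Your skeleton --- induction on $\lvert V(\Gamma)\rvert$, the free-product and join/K\"unneth cases, and the amalgam $A_\Gamma=A_{\Gamma\setminus v}\ast_{A_{\lk(v)}}A_{\st(v)}$ with $A_{\st(v)}=\mathbb{Z}\times A_{\lk(v)}$ being $\ell^2$-acyclic --- is indeed the right shape, and the first two cases check out. The gap is exactly in the step you flag as hardest. Vanishing of $\beta^{(2)}_*(A_{\st(v)})$ does \emph{not} collapse the connecting maps to isomorphisms. After deleting the $\ell^2$-acyclic terms, the weakly exact sequence reads
\[
\cdots\to H^{(2)}_k(A_{\lk(v)})\xrightarrow{\;\alpha_k\;}H^{(2)}_k(A_{\Gamma\setminus v})\to H^{(2)}_k(A_\Gamma)\to H^{(2)}_{k-1}(A_{\lk(v)})\xrightarrow{\;\alpha_{k-1}\;}\cdots
\]
(coefficients in $\mathcal{N}(A_\Gamma)$ throughout), and additivity of von Neumann dimension only yields
$\beta^{(2)}_k(A_\Gamma)=\bigl(\beta^{(2)}_k(A_{\Gamma\setminus v})-\operatorname{rk}\alpha_k\bigr)+\bigl(\beta^{(2)}_{k-1}(A_{\lk(v)})-\operatorname{rk}\alpha_{k-1}\bigr)$. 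The inductive hypothesis controls the dimensions of the terms but says nothing about the ranks of the maps $\alpha_k$, and these are genuinely nonzero in general. Concretely, let $\Gamma$ be the path $w_1-v-w_2$ together with an isolated vertex $u$, so $A_{\lk(v)}=F_2$, $A_{\Gamma\setminus v}=F_3$, $A_{\st(v)}=\mathbb{Z}\times F_2$, and $A_\Gamma=(\mathbb{Z}\times F_2)\ast\mathbb{Z}$. Here $\beta^{(2)}_2(A_\Gamma)=0$ and $\beta^{(2)}_1(A_\Gamma)=1$, which in the sequence above forces $\alpha_1\colon H_1^{(2)}(F_2)\to H_1^{(2)}(F_3)$ to be injective of rank $1$; if the connecting map $H^{(2)}_2(A_\Gamma)\to H^{(2)}_1(A_{\lk(v)})$ were an isomorphism as you assert, you would instead obtain $\beta^{(2)}_2(A_\Gamma)=\beta_1^{(2)}(F_2)=1$, which is false.

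To close the induction you must prove that $\operatorname{rk}\alpha_k$ coincides with the rank of the corresponding map $\overline{H}_{k-1}(\widehat{\lk(v)})\to\overline{H}_{k-1}(\widehat{\Gamma\setminus v})$ in the simplicial Mayer--Vietoris sequence; matching dimensions of objects is not enough, you must match maps. This is precisely the extra ingredient in the Davis--Leary argument: they carry a strengthened inductive statement identifying the $\ell^2$-(co)homology functorially, amounting to the fact that for a full subcomplex the restriction map on reduced $\ell^2$-cohomology is surjective (equivalently, inclusion of a special subgroup induces an injection on $\ell^2$-homology with $\mathcal{N}(A_\Gamma)$-coefficients), with cokernel computed simplicially. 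Be warned that the obvious attempted fix --- using the retraction $A_{\Gamma\setminus v}\to A_{\lk(v)}$ onto the special subgroup --- does not immediately work, because the retraction is not compatible with the $\mathcal{N}(A_\Gamma)$-module structure on the coefficients. Until this rank identification is supplied, the third case of your induction does not go through.
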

\begin{remark}
	This result was recently generalized by Avramidi-Okun-Schreve in \cite{Avramidi} for the $\ell^2$-Betti numbers of a RAAG with coefficients in an arbitrary field $\mathbb{F}$.
\end{remark}
The \textbf{Bestvina-Brady group} $BB_\Gamma$ associated to a RAAG $A_\Gamma$ is the kernel of the map $\varphi: A_\Gamma\to\mathbb{Z}$ defined as $\varphi(v)=1~\forall~v\in V(\Gamma)$. The homological finiteness properties of those groups are well-known:
\begin{theorem}[Bestvina-Brady \cite{Bestvina}]\label{Bestvina-Brady} Let $A_\Gamma$ be a RAAG. Then $BB_\Gamma$ is $FP_{n+1}$ if and only if $\widehat{\Gamma}$ is $n$-acyclic and $BB_\Gamma$ is $FP$ if and only if $\widehat{\Gamma}$ is acyclic.
\end{theorem}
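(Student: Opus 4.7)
The plan is to follow the original argument of Bestvina and Brady using discrete Morse theory on a CAT(0) cube complex. I would first model $A_\Gamma$ geometrically by the Salvetti complex $S_\Gamma$, a compact non-positively curved cube complex with $\pi_1(S_\Gamma) = A_\Gamma$, whose universal cover $\widetilde{S_\Gamma}$ is a contractible CAT(0) cube complex on which $A_\Gamma$ acts freely, properly, and cocompactly. The map $\varphi \colon A_\Gamma \to \mathbb{Z}$ extends to a $\varphi$-equivariant continuous map $f \colon \widetilde{S_\Gamma} \to \mathbb{R}$ by sending each vertex to its image under $\varphi$ and extending affinely on every cube via the sum of coordinates. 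This is the Bestvina-Brady Morse function, and its restriction to the ``horosphere'' $X = f^{-1}(0)$ gives a free action of $BB_\Gamma$ on $X$.

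Next, I would compute the ascending and descending links at each vertex $v \in \widetilde{S_\Gamma}^{(0)}$. Because the edges of $\widetilde{S_\Gamma}$ incident to $v$ correspond to the generators of $A_\Gamma$ (oriented positively or negatively) and a collection of them spans a cube exactly when the underlying vertices span a clique in $\Gamma$, both the ascending link $\mathrm{lk}_\uparrow(v)$ and the descending link $\mathrm{lk}_\downarrow(v)$ are canonically identified with the flag complex $\widehat{\Gamma}$.

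The core technical step is the Bestvina-Brady Morse Lemma: the change in homotopy type between sublevel sets $f^{-1}((-\infty,t])$ and $f^{-1}((-\infty,t+1])$ is controlled (up to homotopy) by coning off ascending links at the newly included vertices, and similarly for descending links when moving downwards. Combining this with the fact that $\widetilde{S_\Gamma}$ is contractible, one obtains a Mayer-Vietoris-style comparison showing that the level set $X = f^{-1}(0)$ is $n$-acyclic if and only if $\widehat{\Gamma}$ is $n$-acyclic. To convert this into the stated finiteness property, I would apply Brown's criterion to the $BB_\Gamma$-cocompact filtration of $X$ by subcomplexes $X_k = X \cap f_{\mathrm{transverse}}^{-1}([-k,k])$ (where the transverse coordinate is measured along the $\mathbb{Z}$-cover direction). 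Brown's criterion then yields $BB_\Gamma$ is $FP_{n+1}$ precisely when the direct system of reduced homology $\widetilde{H}_i(X_k;\mathbb{Z})$ is essentially trivial for all $i \leq n$, which by the Morse-theoretic comparison is equivalent to $\widehat{\Gamma}$ being $n$-acyclic. The $FP$ case follows from the $FP_n$ case for all $n$ together with the observation that $BB_\Gamma$ has cohomological dimension bounded by the dimension of $\widehat{\Gamma}$, since $\widetilde{S_\Gamma}$ has dimension equal to the clique number of $\Gamma$.

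The main obstacle is the careful execution of the Morse-theoretic comparison between sublevel sets and their level sets in an infinite-dimensional setting where $BB_\Gamma$ does not act cocompactly on $X$ itself. In particular, producing explicit cocycles that detect non-vanishing of $\widetilde{H}_n(\widehat{\Gamma};\mathbb{Z})$ inside the group cohomology of $BB_\Gamma$ for the converse direction, and verifying that the Brown-criterion filtration genuinely has the stated homotopical behavior, are the delicate points requiring Morse theory on CAT(0) cube complexes in its sharpest form.
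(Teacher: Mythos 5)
This statement is quoted in the paper as an external result of Bestvina--Brady (\cite{Bestvina}); the paper offers no proof of its own, so there is nothing internal to compare against. Your sketch reconstructs the original Bestvina--Brady argument, which is the correct and essentially the only known route: Salvetti complex, the equivariant Morse function $f$ extending $\varphi$, identification of both ascending and descending links with $\widehat{\Gamma}$, the Morse Lemma describing how sublevel sets change by coning off links, and Brown's criterion to translate the (non-)triviality of the resulting direct system of reduced homology groups into the finiteness property $FP_{n+1}$. The $FP$ case via $FP_\infty$ plus finite cohomological dimension is also handled correctly.

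Two points in your write-up are garbled and would need repair in a full proof, though neither is fatal. First, the filtration to which Brown's criterion is applied is not a filtration of the level set $X=f^{-1}(0)$ by a ``transverse coordinate'' --- there is no such coordinate on $X$, and $X\cap f^{-1}([-k,k])=X$ for all $k$. The correct object is the contractible complex $\widetilde{S_\Gamma}$ filtered by the slabs $f^{-1}([-k,k])$, each of which is $BB_\Gamma$-invariant and $BB_\Gamma$-cocompact; the Morse Lemma shows that passing from one slab to the next attaches cones on copies of $\mathrm{lk}_\uparrow$ and $\mathrm{lk}_\downarrow$, i.e.\ of $\widehat{\Gamma}$, which is what controls the direct system of reduced homology. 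Second, your closing remark that ``$BB_\Gamma$ does not act cocompactly on $X$ itself'' is false: $BB_\Gamma$ acts cocompactly on every level set and on every slab of finite width; the failure of cocompactness is for the action on all of $\widetilde{S_\Gamma}$, and that failure is precisely the reason one must invoke Brown's criterion rather than conclude directly. You are also right that the delicate part of the converse direction is showing that the link classes survive into arbitrarily wide slabs (Bestvina--Brady do this with their ``sheet'' construction), which your sketch flags but does not carry out.
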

\begin{corollary}\label{FibringRAAGs}
	Let $A_\Gamma$ be a RAAG. Then, $A_\Gamma$ virtually fibres if and only if $\Gamma$ is connected
\end{corollary}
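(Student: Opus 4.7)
The plan is to exploit the two main tools developed in the preliminaries: the Bestvina--Brady theorem (Theorem \ref{Bestvina-Brady}) to handle the ``connected'' direction, and the Davis--Leary formula (Theorem \ref{BettiRAAG}) together with the $\ell^2$-Betti obstructions (Propositions \ref{FiniteIndexBetti} and \ref{VanishingBetti}) for the ``disconnected'' direction. Both are direct applications.

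For sufficiency, I would assume $\Gamma$ is connected. Then $\widehat{\Gamma}$ is also connected, hence $0$-acyclic in the sense of Theorem \ref{Bestvina-Brady}. Applying that theorem with $n=0$ shows that $BB_\Gamma$ is $FP_1$, i.e.\ finitely generated. Since $BB_\Gamma$ is by definition the kernel of the epimorphism $\varphi:A_\Gamma\to\mathbb{Z}$ sending every vertex to $1$, this directly exhibits an algebraic fibration of $A_\Gamma$ itself, so $A_\Gamma$ fibres, in particular virtually.

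For necessity, I would argue by contrapositive: assume $\Gamma$ is disconnected. Then $\widehat{\Gamma}$ has at least two connected components, so $\overline{\beta}_0(\widehat{\Gamma})>0$, and Theorem \ref{BettiRAAG} gives $\beta_1^{(2)}(A_\Gamma)>0$. If $A_\Gamma$ virtually fibred, some finite index $H\leq A_\Gamma$ would sit in a short exact sequence $1\to K\to H\to\mathbb{Z}\to 1$ with $K$ finitely generated. Finitely generated groups have finite first $\ell^2$-Betti number, so Proposition \ref{VanishingBetti}.3 forces $\beta_1^{(2)}(H)=0$, contradicting Proposition \ref{FiniteIndexBetti}, which propagates the positivity of $\beta_1^{(2)}$ to every finite index subgroup.

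The only point requiring a touch of care, and hence the main (minor) obstacle, is checking the hypotheses of Proposition \ref{VanishingBetti}.3, namely that both $H$ and $K$ are infinite. Since $\Gamma$ is disconnected with at least two vertices, $A_\Gamma$ splits as a non-trivial free product, hence has infinitely many ends and is not virtually cyclic; thus no finite index subgroup $H$ is virtually $\mathbb{Z}$, so the kernel $K$ of any putative fibration is necessarily infinite. Everything else is immediate from the cited results.
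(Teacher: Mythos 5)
Your proposal is correct and follows essentially the same route as the paper: Bestvina--Brady for the connected direction, and the Davis--Leary formula combined with the implication ``virtually fibred $\Rightarrow\beta_1^{(2)}=0$'' for the disconnected direction. The paper simply invokes that implication as already established (it is recorded right after Proposition \ref{VanishingBetti}), whereas you re-derive it and verify the infiniteness hypotheses explicitly; this is sound but not a genuinely different argument.
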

\begin{proof}
	If $A_\Gamma$ is virtually fibred then $\beta_1^{(2)}(A_\Gamma)=0$ so, by Theorem \ref{BettiRAAG}, $\overline{\beta}_0(\widehat{\Gamma})=0$ and $\Gamma$ is connected. If $\Gamma$ is connected then $BB_\Gamma$ is finitely generated, so the map $\varphi:A_\Gamma\to\mathbb{Z}$ given by sending all standard generators to one is a fibration of $A_\Gamma$. Conversely, if $\Gamma$ is disconnected  $\beta_1^{(2)}(A_\Gamma)=\overline{\beta}_0(\widehat{\Gamma})\neq 0$ so $A_\Gamma$ is not virtually fibred.
\end{proof}
\subsection{Automorphism group of a RAAG}\label{OUTRAAG}
The automorphism group $\Aut(A_\Gamma)$ of a RAAG $A_\Gamma$ has a canonical finite generating set. To describe it we need to introduce a standard partial order on the set of vertices of $\Gamma$. This is defined as $u\leq v$ if $\mathrm{lk}(u)\subset\mathrm{st}(v)$. We can consider an equivalence relation of the vertex set by saying $v\sim w$ if and only if $v\leq w$ and $w\leq v$ and equivalence classes can be ordered, i.e. $[v_1]\leq[v_2]$ if and only if $w_1\leq w_2$ for some $w_1\in[v_1],w_2\in[v_2]$.
\begin{theorem}[Laurence \cite{Laurence}] $\Aut(A_\Gamma)$ is generated by the following automorphisms:
	\begin{itemize}
		\item \textbf{Graph symmetries}: any $\varphi\in\Aut(\Gamma)$ induces an automorphism $\overline{\varphi}\in\Aut(A_\Gamma)$.
		\item \textbf{Inversions}: for each $v\in V(\Gamma)$ there is a map $\iota_v\in\Aut(A_\Gamma)$ given by $\iota_v(v)=v^{-1}$ and $\iota_v(w)=w$ for every $w\in V(\Gamma)\setminus\lbrace v\rbrace$.
		\item \textbf{Transvections}: if $v,w\in V(\Gamma)$ the map $\tau^v_w$ given by $\tau^v_w(w)=wv$ and $\tau^v_w(u)=u$ for every $u\in V(\Gamma)\setminus\lbrace w\rbrace$ is a well defined element of $\Aut(A_\Gamma)$ if and only if $w\leq v$.
		\item \textbf{Partial conjugations}: if $v\in V(\Gamma)$ is such that  $\Gamma\setminus\mathrm{st}(v)\neq\emptyset$ and $C$ is a connected component of $\Gamma\setminus\mathrm{st}(v)$  there is a map $\pi_C^v\in\Aut(A_\Gamma)$ given by $\pi_C^v(w)=vwv^{-1}$ if $w\in V(C)$ and $\pi_C^v(w)=w$ if $w\in V(\Gamma\setminus C)$.
	\end{itemize}
\end{theorem}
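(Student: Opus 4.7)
The plan is to follow Laurence's original strategy (which extends earlier partial results of Servatius): first verify that each of the four prescribed families consists of well-defined automorphisms of $A_\Gamma$, and then show by induction on a length invariant that every element of $\Aut(A_\Gamma)$ is a product of such generators.

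For the first step I check in each case that the commutation relations of $A_\Gamma$ are preserved. Graph symmetries permute $E(\Gamma)$ by definition. An inversion $\iota_v$ sends a relation $[v,w]=1$ to $[v^{-1},w]=1$, which still holds. A transvection $\tau_w^v$ maps $[w,u]=1$ (for $u\in\lk(w)$) to $[wv,u]=1$; the latter is valid precisely because the condition $\lk(w)\subset\st(v)$ gives $[v,u]=1$ for every $u\in\lk(w)$. For a partial conjugation $\pi_C^v$ one only has to check relations $[u_1,u_2]=1$ where $u_1\in V(C)$ and $u_2$ lies either in $V(\st(v))$ or in another component of $\Gamma\setminus\st(v)$; in each case the relation is preserved or conjugated by $v$, which commutes with $u_2$ whenever necessary because $V(\st(v))$ contains all vertices adjacent to $v$.

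For the generation step, introduce a complexity invariant such as $|\varphi|=\sum_{v\in V(\Gamma)}\ell(\varphi(v))$, where $\ell$ denotes cyclically reduced word length, and argue by induction on $|\varphi|$. The crucial input is Servatius's centralizer theorem, which asserts $C_{A_\Gamma}(v)=A_{\st(v)}$ and, more generally, controls centralizers of cyclically reduced elements in terms of the graph $\Gamma$. From this one deduces that any automorphism must carry the conjugacy class of a vertex generator to the conjugacy class of an element whose graph-theoretic ``support'' is compatible with the domination preorder on $V(\Gamma)$. After pre-composing $\varphi$ with a suitable product of graph symmetries, inversions and transvections, one arranges that $\varphi(v)$ is conjugate in $A_\Gamma$ to $v$ itself for every $v\in V(\Gamma)$; the remaining conjugation is then stripped away one vertex at a time by partial conjugations.

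The main obstacle is the combinatorial bookkeeping in this inductive reduction. Partial conjugations can increase the length of $\varphi(v)$ for some $v$ while decreasing it for others, and transvections propagate changes along the domination preorder, so one has to orchestrate these elementary moves carefully to guarantee a strict decrease in $|\varphi|$ at each stage. Servatius's graph-theoretic description of centralizers is exactly what keeps the induction from cycling: it pins down, at each step, which element of the four families is the appropriate one to apply. Once $|\varphi|=|V(\Gamma)|$, a final argument shows that $\varphi$ permutes the set $\{v^{\pm 1} : v\in V(\Gamma)\}$ and hence is itself a product of graph symmetries and inversions, completing the proof.
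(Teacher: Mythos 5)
The paper does not prove this statement; it is quoted verbatim as background and attributed to Laurence's thesis, so there is no internal proof to compare against. Judged on its own terms, your proposal correctly identifies the architecture of the known argument: the easy half (each listed map preserves the defining relators, with the transvection case resting on $\lk(w)\subset\st(v)$ and the partial-conjugation case on the fact that vertices of $\st(v)$ commute with $v$) and the hard half (a Whitehead/peak-reduction induction on the total cyclically reduced length $|\varphi|=\sum_v\ell(\varphi(v))$). The well-definedness checks you sketch are complete enough.

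The genuine gap is in the generation step, and it is not a minor bookkeeping issue: the entire content of Laurence's theorem is the claim that whenever $|\varphi|>|V(\Gamma)|$ (or, more precisely, whenever $\varphi$ is not already a composition of a graph symmetry, inversions and conjugations) there exists one of the four elementary automorphisms whose pre- or post-composition strictly decreases $|\varphi|$. You name this as ``the main obstacle'' and assert that Servatius's Centralizer Theorem ``pins down which element of the four families is the appropriate one to apply,'' but you never produce the length-reducing move or prove it exists. That existence statement is exactly what Servatius could only establish for restricted classes of graphs (e.g.\ triangle-free and full graphs) and what Laurence's thesis spends its technical core proving in general, via a delicate analysis of how the supports of the images $\varphi(v)$ interact with links, stars and the domination preorder. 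Without that argument the induction has no base for its inductive step, and the proof does not go through. A secondary, fixable imprecision: at the terminal stage $\ell(\varphi(v))=1$ for all $v$ only tells you each $\varphi(v)$ is \emph{conjugate} to some $w^{\pm1}$; you need an additional argument (again centralizer-based) that the residual conjugating elements can be removed by partial conjugations and inner automorphisms before concluding that $\varphi$ permutes $\{v^{\pm1}\}$.
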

In $\Out(A_\Gamma)=\Aut(A_\Gamma)/\mathrm{Inn}(A_\Gamma)$ all generators are non-trivial except for the inner automorphisms.
	
Day proved in \cite{Day 3} that $\Aut(A_\Gamma)$ is not only finitely generated but finitely presented. We define $\Aut^0(A_\Gamma)$ (respectively, $\Out^0(A_\Gamma)$) to be the subgroup of $\Aut(A_\Gamma)$ (respectively, $\Out(A_\Gamma)$) generated by inversions, transvections and partial conjugations. Similarly, we define $\mathrm{SAut}^0(A_\Gamma)$ (respectively, $\mathrm{SOut}^0(A_\Gamma)$) as the subgroup generated by transvections and partial conjugations. $\Aut^0(A_\Gamma)$ and $\mathrm{SAut}^0(A_\Gamma)$ (respectively, $\Out^0(A_\Gamma)$ and $\mathrm{SOut}^0(A_\Gamma)$) are subgroups of finite index of $\Aut(A_\Gamma)$ (respectively, $\Out(A_\Gamma)$), see \cite{Wade} Proposition 3.6.

Let us state some definitions and results about automorphism groups of RAAGs that will be used later.
\begin{definition}
	Let $\Gamma$ be a simplicial graph and $u,v\in V(\Gamma)$ be two vertices that are not adjacent in $\Gamma$. We say that the pair $(u,v)$ forms a \textbf{separating intersection of links} (SIL) if there is a connected component of $\Gamma\setminus(\lk(u)\cap\lk(v))$ that does not contain $u$ nor $v$.
\end{definition}
The following lemma clarifies why the absence of SILs is a condition that is meaningful only for RAAGs with underlying connected graph.
\begin{lemma}\label{NoSILs}
	If $\Gamma$ is a disconnected simplicial graph with no SILs then $A_\Gamma\cong\mathbb{Z}^n\ast \mathbb{Z}^m$ for some $n,m\geq1$.
\end{lemma}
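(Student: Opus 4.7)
The plan is to show that the hypotheses force $\Gamma$ to have exactly two connected components, each of which is a complete graph. Since complete graphs correspond to free abelian factors and disjoint unions correspond to free products, this gives the claimed decomposition.

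First I would rule out the possibility that $\Gamma$ has three or more components. Suppose for contradiction that $\Gamma = \Gamma_1 \sqcup \Gamma_2 \sqcup \Gamma_3 \sqcup \cdots$ with at least three components. Pick any $u \in V(\Gamma_1)$ and $v \in V(\Gamma_2)$. Since $u$ and $v$ lie in different components they are non-adjacent, and moreover $\lk(u) \subset V(\Gamma_1)$ while $\lk(v) \subset V(\Gamma_2)$, so $\lk(u) \cap \lk(v) = \emptyset$. Hence $\Gamma \setminus (\lk(u) \cap \lk(v)) = \Gamma$, and $\Gamma_3$ is a connected component containing neither $u$ nor $v$. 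This is a SIL, contradicting the hypothesis. Therefore $\Gamma$ has exactly two connected components $\Gamma_1$ and $\Gamma_2$.

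Next I would show each $\Gamma_i$ is complete. Suppose, say, that $\Gamma_1$ is not a complete graph, so there exist non-adjacent vertices $u, v \in V(\Gamma_1)$. Note that $u \notin \lk(v)$ and $v \notin \lk(u)$, so $u, v \notin \lk(u) \cap \lk(v)$; moreover $\lk(u) \cap \lk(v) \subset V(\Gamma_1)$ since both links are contained in $\Gamma_1$. Consequently the component $\Gamma_2$ is untouched by the removal and remains a connected component of $\Gamma \setminus (\lk(u) \cap \lk(v))$ that contains neither $u$ nor $v$. This again produces a SIL, contradicting the hypothesis. Hence $\Gamma_1$ is complete, and symmetrically $\Gamma_2$ is complete.

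Writing $\Gamma_1 \cong K_n$ and $\Gamma_2 \cong K_m$ for some $n, m \geq 1$, the isomorphism theorem for RAAGs combined with the standard correspondence between joins/disjoint unions and direct/free products gives $A_\Gamma \cong A_{\Gamma_1} \ast A_{\Gamma_2} \cong \mathbb{Z}^n \ast \mathbb{Z}^m$. The argument is essentially a double application of the same SIL construction, so I do not expect any genuine obstacle; the only care needed is to check that the chosen $u, v$ are indeed non-adjacent and survive the vertex removal, which is immediate in both cases.
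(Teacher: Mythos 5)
Your proof is correct and follows essentially the same route as the paper: first use a third component as a SIL witness to force exactly two components, then use the other component as a SIL witness to force each component to be complete. The extra details you supply (non-adjacency of the chosen vertices and the fact that $\lk(u)\cap\lk(v)$ stays inside one component) are exactly the checks implicit in the paper's argument.
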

\begin{proof}
	If the graph $\Gamma$ has at least $3$ connected components $\Gamma_1,\Gamma_2$ and $\Gamma_3$ then for any two vertices $u\in V(\Gamma_1)$ and $v\in V(\Gamma_2)$, the pair $(u,v)$ forms a SIL. Indeed, since $\Gamma_3$ is a connected component of $\Gamma\setminus(\lk(u)\cap\lk(v))$, it witnesses the SIL condition. Hence, we may assume that $\Gamma=\Gamma_1\sqcup\Gamma_2$ with $\Gamma_1$ and $\Gamma_2$ connected graphs. 
	
	If $\Gamma_1$ is not a complete graph there exists two vertices $u,v\in V(\Gamma_1)$ that are not connected in $\Gamma$. Then, $\Gamma_2$ is a connected component of  $\Gamma\setminus(\lk(u)\cap\lk(v))$ and so the pair $(u,v)$ forms a SIL. It follows that $\Gamma_1$ is a complete graph and by the same reasoning $\Gamma_2$ is a complete graph and the claim follows.
\end{proof}
Every transvection and partial conjugation defines an automorphism of infinite order, while graph symmetries and inversions are of finite order. Therefore, one can easily check that $\Out(A_\Gamma)$ is finite if and only if it contains no transvections and no partial conjugations (see \cite{Wade} Proposition 3.6).
\begin{lemma}\label{FiniteOutLemma}
	Let $\Gamma$ be a simplicial graph with more than one vertex. If $\Out(A_\Gamma)$ is finite then $\Gamma$ is connected and the centre of $A_\Gamma$ is trivial. In particular, $A_\Gamma$ has finite index in $\Aut(A_\Gamma)$.
\end{lemma}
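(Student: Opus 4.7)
The plan is to establish the three assertions (triviality of $Z(A_\Gamma)$, connectedness of $\Gamma$, and the finite-index claim) separately, using throughout the fact recalled in the paragraph preceding the lemma: every transvection $\tau_w^v$ with $w\ne v$ and every non-inner partial conjugation is of infinite order in $\Out(A_\Gamma)$, so the finiteness of $\Out(A_\Gamma)$ precludes both.

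I would start with the centre, which is the quickest step. By the Charney proposition on centres of RAAGs quoted earlier, a non-trivial $Z(A_\Gamma)$ would provide a vertex $v$ with $\st(v)=\Gamma$; for any other vertex $w$ (which exists since $|V(\Gamma)|>1$), the containment $\lk(w)\subseteq\st(v)=\Gamma$ is automatic and $\tau_w^v$ is a valid non-trivial transvection, a contradiction. Once $Z(A_\Gamma)=1$ is in hand, the finite-index claim is immediate: $\mathrm{Inn}(A_\Gamma)\cong A_\Gamma/Z(A_\Gamma)=A_\Gamma$ sits in $\Aut(A_\Gamma)$ with quotient the finite group $\Out(A_\Gamma)$, so $[\Aut(A_\Gamma):A_\Gamma]<\infty$.

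The heart of the proof is connectedness. Assuming for contradiction that $\Gamma=\Gamma_1\sqcup\Gamma_2$ is disconnected gives $A_\Gamma=A_{\Gamma_1}\ast A_{\Gamma_2}$, a non-trivial free product. I would split into subcases. If some $\Gamma_i$, say $\Gamma_1$, is not a clique, choose non-adjacent $v,v'\in V(\Gamma_1)$ and a connected component $C$ of $\Gamma_2\subseteq\Gamma\setminus\st(v)$, and derive a contradiction from $\pi_C^v=\mathrm{ad}_g$. Using that in a free product the centralizer of a non-trivial element of a single factor stays in that factor, I would force successively $g\in A_{\Gamma_1}$ (from $gv'g^{-1}=v'$) and $v^{-1}g\in A_{\Gamma_2}$ (from $v^{-1}g$ centralising $A_C$), hence $g=v$, so $[v,v']=1$, contradicting non-adjacency. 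If instead both $\Gamma_i$ are cliques with $\max(|V(\Gamma_i)|)\ge 2$, any two distinct vertices inside the larger clique give a non-trivial transvection since the clique condition trivialises the domination relation. The lone remaining case $|V(\Gamma_1)|=|V(\Gamma_2)|=1$ gives $A_\Gamma\cong F_2$, and $\Out(F_2)\cong \mathrm{GL}_2(\mathbb{Z})$ is infinite by Theorem \ref{BettiGL(Z)}.

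I expect the main obstacle to be the free-product centralizer bookkeeping in the first subcase above: I need to justify the normal-form manipulations in $A_{\Gamma_1}\ast A_{\Gamma_2}$ rigorously (via the Bass--Serre tree or an explicit normal-form argument), and I must be careful in the degenerate variants (three or more connected components, or pieces of size one) to always have an extra vertex $w\notin V(C)\cup\{v\}$ available to pin down $g$; having more free factors actually makes this easier, so the two-component case is the tightest. Once these computations go through, each disconnected situation supplies either a non-trivial transvection or a non-inner partial conjugation of infinite order, contradicting the finiteness of $\Out(A_\Gamma)$ and completing the proof.
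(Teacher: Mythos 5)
Your proposal is correct, and its overall architecture coincides with the paper's: the centre is killed by a transvection based at a central vertex, the finite-index claim is exactly $\mathrm{Inn}(A_\Gamma)\cong A_\Gamma/\mathrm{Z}(A_\Gamma)=A_\Gamma$, and the disconnected case is whittled down to $\mathbb{Z}^n\ast\mathbb{Z}^m$ with $\max(n,m)\geq 2$ (killed by a transvection inside the larger complete factor) and to $F_2$ (where $\Out(F_2)\cong\mathrm{GL}_2(\mathbb{Z})$ is infinite). The one genuine divergence is how you certify that a partial conjugation is non-inner. The paper argues top-down: finiteness of $\Out(A_\Gamma)$ forces $\Gamma\setminus\st(v)$ to be connected for every $v$ (otherwise there is a non-inner partial conjugation, a fact it takes as known), which in turn forces at most two components, both complete. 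You instead fix a non-clique piece, choose non-adjacent $v,v'$ there and a component $C$ of the other free factor, and show $\pi_C^v=\mathrm{ad}_g$ implies $g=v$ and hence $[v,v']=1$ via two centralizer computations in $A_{\Gamma_1}\ast A_{\Gamma_2}$. Your route is more self-contained — it needs only that centralizers of non-trivial elements of a free factor stay in that factor and that non-adjacent generators of a RAAG do not commute — at the cost of the normal-form bookkeeping you flag; the paper's route is shorter on the page but outsources the non-innerness claim to a cited structural fact. Your case split (some piece not a clique; both pieces cliques with a factor of rank at least two; two isolated vertices) is exhaustive, including when $\Gamma$ has three or more components, since any two-block partition of such a graph has a non-clique block.
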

\begin{proof}
	If $A_\Gamma=\mathbb{Z}^n$ then  $\Out(A_\Gamma)=\mathrm{GL}_n(\mathbb{Z})$ is an infinite group. Hence, we may assume that  $\mathrm{Z}(A_\Gamma)\neq A_\Gamma$. If $\mathrm{Z}(A_\Gamma)\neq1$ then $A_\Gamma=A_{\Gamma_1}\times A_{\Gamma_2}$, where $\Gamma_1,\Gamma_2\neq\emptyset$ and $\Gamma_1$ is complete. Consider $u\in V(\Gamma_2)$ and $v\in V(\Gamma_1)$, since $\st(v)=\Gamma$ we have that $\tau_u^v$ is a well-defined element of infinite order in $\Out(A_\Gamma)$. Hence $\Out(A_\Gamma)$ is infinite.
	
	 Assume that $\Gamma$ is disconnected. Since $\Out(A_\Gamma)$ is finite we must have that $\Gamma\setminus\st(v)$ is connected for every $v\in V(\Gamma)$, otherwise there must be a non-inner partial conjugation with respect to the vertex $v$. This forces $\Gamma$ to have at most two connected components, otherwise $\Gamma\setminus\st(v)$ would be disconnected for every $v\in V(\Gamma)$. Let  $\Gamma=\Gamma_1\sqcup\Gamma_2$  be the decomposition into connected components and assume $\Gamma_i$ is not a complete graph for $i\in\lbrace 1,2\rbrace$ . Take $v\in V(\Gamma_i)$ such that $\st(v)\neq \Gamma_i$, then $\Gamma\setminus\st(v)$ is disconnected. This implies that both $\Gamma_1$ and $\Gamma_2$ are complete graphs. If $V(\Gamma_i)>1$ for $i\in\lbrace 1,2\rbrace$ the transvection $\tau_u^v$ is a well-defined element of infinite order in $\Out(A_\Gamma)$ for every $u,v\in V(\Gamma_i)$. Hence $V(\Gamma_1)=V(\Gamma_2)=1$ and $A_\Gamma$ is the free group in two generators, which has infinite outer automorphism group.
	 
	 For the second part, note that the centre of $A_\Gamma$ is trivial. Hence, the inner automorphism group $\mathrm{Inn}(A_\Gamma)$, which has finite index in $\Aut(A_\Gamma)$, is isomorphic to $A_\Gamma$.
\end{proof}
To conclude this section, we present several results that will later serve as tools for constructing examples of RAAGs exhibiting distinct $\ell^2$-Betti numbers and different algebraic fibring properties.
\begin{proposition}\label{nsphereRAAG}
	For each $n\geq 1$ there exists a simplicial graph $\Gamma_n$ such that $\widehat{\Gamma_n}\cong\mathbb{S}^n$, where $\mathbb{S}^n$ denotes the $n$-sphere, and such that $\Out(A_{\Gamma_n})$ is finite.
\end{proposition}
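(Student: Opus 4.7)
The plan is to take $\Gamma_n$ to be the $1$-skeleton of the barycentric subdivision $K_n:=\mathrm{sd}(\partial\Delta^{n+1})$ of the boundary of the $(n+1)$-simplex. Because barycentric subdivision preserves the underlying polyhedron, $K_n$ triangulates $\mathbb{S}^n$; and because the barycentric subdivision of any simplicial complex is a flag complex (any pairwise comparable family of simplices is a chain), $\widehat{\Gamma_n}=K_n\cong\mathbb{S}^n$. By the criterion recalled just before Lemma~\ref{FiniteOutLemma}, it then suffices to verify that $\Gamma_n$ admits no transvection and that $\Gamma_n\setminus\st(\sigma)$ is connected for every vertex $\sigma$ (so that every partial conjugation is inner).

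I would identify the vertices of $K_n$ with non-empty proper subsets $\sigma\subsetneq S:=\{0,1,\dots,n+1\}$, two being adjacent iff one strictly contains the other; in particular $\st(\sigma)$ is the set of vertices comparable to $\sigma$. To rule out transvections one needs $\lk(\sigma)\not\subseteq\st(\sigma')$ whenever $\sigma\neq\sigma'$, which I would establish by exhibiting a simple witness $\tau$ case by case: $\tau=\sigma\cup\{x\}$ with $x\in S\setminus\sigma'$ if $\sigma\subsetneq\sigma'$; $\tau=\{x\}$ with $x\in\sigma\setminus\sigma'$ if $\sigma'\subsetneq\sigma$, or if $\sigma,\sigma'$ are incomparable with $|\sigma|\geq 2$; and $\tau=\{x,y\}$ for a suitably chosen $y\in S\setminus\{x\}$ when $\sigma=\{x\}$ is a singleton incomparable to $\sigma'$ (the inequality $|S|\geq 3$ guarantees that a $y$ with $\{y\}\neq\sigma'$ exists). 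In each case the verification that $\tau$ is a valid vertex comparable to $\sigma$ but distinct from and incomparable with $\sigma'$ reduces to a cardinality count together with the membership choices of $x$ and $y$.

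For the connectedness of $\Gamma_n\setminus\st(\sigma)$ I would use the maximal proper subsets $\tau_x^\ast:=S\setminus\{x\}$, with $x\in\sigma$, as ``hubs''. Each $\tau_x^\ast$ is incomparable to $\sigma$: the membership $x\in\sigma\setminus\tau_x^\ast$ forbids $\sigma\subseteq\tau_x^\ast$, and a cardinality argument forces $\tau_x^\ast\not\subseteq\sigma$. Any other vertex $\tau$ incomparable to $\sigma$ has $\sigma\not\subseteq\tau$, so some $x\in\sigma$ lies in the complement of $\tau$, giving $\tau\subsetneq\tau_x^\ast$ and hence an edge to a hub. When $|\sigma|\geq 2$, two different hubs $\tau_x^\ast$ and $\tau_y^\ast$ are joined by the common neighbour $S\setminus\{x,y\}$, which a direct check shows is itself incomparable to $\sigma$ regardless of $|\sigma|$; when $|\sigma|=1$ there is a single hub, to which every other incomparable vertex connects directly.

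Taken together, these two combinatorial facts (along with the harmless observation that no vertex can satisfy $\st(\sigma)=V(K_n)$, since otherwise $K_n$ would be a cone rather than $\mathbb{S}^n$) imply, by the discussion preceding Lemma~\ref{FiniteOutLemma}, that $\Out(A_{\Gamma_n})$ is generated by graph symmetries and inversions modulo inner automorphisms and is therefore finite. The only delicate aspect of the plan is the bookkeeping in the case analysis: ensuring that each candidate witness $\tau$ or $\tau_x^\ast$ is a genuine non-empty proper subset of $S$ and genuinely fails to lie in $\st(\sigma')$ or comparability with $\sigma$. The hypothesis $n\geq 1$ (equivalently $|S|\geq 3$) is what handles all such edge cases uniformly.
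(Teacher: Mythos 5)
Your proposal is correct and follows essentially the same strategy as the paper: pass to the barycentric subdivision of a triangulation of $\mathbb{S}^n$ (which is automatically flag), then verify the absence of transvections and the connectedness of $\Gamma_n\setminus\st(\sigma)$ for every vertex, which by the criterion preceding Lemma~\ref{FiniteOutLemma} forces $\Out(A_{\Gamma_n})$ to be finite. The only difference is the starting triangulation ($\partial\Delta^{n+1}$ instead of the paper's $n$-octahedron), and your poset-of-proper-subsets bookkeeping actually supplies the combinatorial verification that the paper leaves as ``straightforward to check.''
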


\begin{proof}
	Let $\lbrace e_i\rbrace_{i=1}^{n+1}$ be the canonical basis of $\mathbb{R}^{n+1}$. Define $\Delta_n$ to be the graph with vertex set $V_1=\lbrace \pm e_i\rbrace_{i=1}^{n+1}$ and where two vertices are adjacent unless they are exact opposites. This graph $\Delta_n$ is the $1$-skeleton of the $n$-octahedron, so its flag complex $\widehat{\Delta_n}$ is homotopy equivalent to $\mathbb{S}^{n}$. Now, define $\Gamma_n$ to be the $1$-skeleton of the barycentric subdivision of $\widehat{\Delta_n}$. Since barycentric subdivision preserves the homotopy type, we see that $\widehat{\Gamma_n}\simeq \mathbb{S}^n$. For each tuple of signs $a\in\lbrace \pm 1\rbrace^{n+1}$, define the vector $f_a=\frac{1}{n+1}a\in\mathbb{R}^{n+1}$. Define $V_2=\lbrace f_a\mid a\in \lbrace \pm 1\rbrace^{n+1}\rbrace$. The vertex set of $\Gamma_n$ is equal to $V_1\cup V_2$ and the edge set is as follows: two vertices of $V_1$  are adjacent unless they are negatives of each other, a vertex $\pm e_i\in V_1$ is adjacent to $f_a\in V_2$ if and only if the sign of the $i$-th component of $f_a$ matches that of $\pm e_i$ and no vertices of $V_2$ are adjacent. From this construction of $\Gamma_n$ it is straightforward to check that $\Gamma_n\setminus\st(v)$ is connected for every $v\in V(\Gamma_n)$ and if $\lk(v)\subset\st(w)$ for some $v,w\in V(\Gamma_n)$ then $v=w$. Hence, $\Out(A_\Gamma)$ is finite and the claim follows.
\end{proof}
The following result shows that it is possible to construct RAAGs whose outer automorphism groups share the same virtual properties as those of other RAAGs.
\begin{theorem}[Wiedmer \cite{Wiedmer}]\label{FiniteindexRAAG} For any RAAG $A_\Gamma$ there exists a RAAG $A_\Lambda$ such that $A_\Gamma$ is a finite index subgroup of $\Out(A_\Lambda)$.
\end{theorem}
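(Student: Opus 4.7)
The plan is to realise $A_\Gamma$ as the pure symmetric outer automorphism group of a carefully engineered graph $\Lambda$ and then use the preliminaries to show that $\mathrm{PSO}(A_\Lambda)$ sits inside $\Out(A_\Lambda)$ with finite index. The starting point is the construction of Day--Wade cited in the introduction: whenever $\Lambda\setminus\st(v)$ has at most two connected components for every $v\in V(\Lambda)$, each vertex of $\Lambda$ contributes at most one non-inner partial conjugation, and the commutation pattern of these partial conjugations is dictated by the link/star combinatorics (via the SIL criterion recalled in Lemma \ref{NoSILs} and the preceding discussion). One then tunes the combinatorics so that the resulting RAAG is isomorphic to $A_\Gamma$.

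Concretely, I would introduce, for each $v\in V(\Gamma)$, a corresponding vertex $\widetilde v$ of $\Lambda$, together with auxiliary ``separator'' vertices (for instance cliques, chosen generic enough to avoid link inclusions) arranged so that $\Lambda\setminus\st(\widetilde v)$ has exactly two connected components. The global adjacency pattern is chosen so that $\pi_{\widetilde v}$ and $\pi_{\widetilde w}$ commute in $\Out(A_\Lambda)$ precisely when $v$ and $w$ are adjacent in $\Gamma$; the SIL test provides the combinatorial certificate of non-commutativity. By the isomorphism theorem for RAAGs, the subgroup generated by the partial conjugations $\pi_{\widetilde v}$ is then identified with $A_\Gamma$, and Day--Wade's theorem promotes this to $\mathrm{PSO}(A_\Lambda)\cong A_\Gamma$.

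Two final checks finish the argument. First, the separator vertices should be chosen so that $\Lambda$ is transvection-free, i.e.\ no vertex's link is contained in another vertex's star. Under this condition the only generators of $\mathrm{SOut}^0(A_\Lambda)$ are partial conjugations, whence $\mathrm{SOut}^0(A_\Lambda)=\mathrm{PSO}(A_\Lambda)$. Second, Wade's Proposition 3.6 (cited in Section \ref{OUTRAAG}) tells us that $\mathrm{SOut}^0(A_\Lambda)$ has finite index in $\Out(A_\Lambda)$, so $A_\Gamma\cong\mathrm{PSO}(A_\Lambda)$ is a finite-index subgroup of $\Out(A_\Lambda)$, as required.

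The main obstacle is the simultaneous combinatorial design of the gadget: it must (i) produce exactly the prescribed partial conjugations and no extras, (ii) realise the commutation graph $\Gamma$ on the nose, and (iii) introduce no domination relations that would spawn unwanted transvections. Reconciling these three constraints uniformly for an arbitrary input graph $\Gamma$ is the technical heart of Wiedmer's argument, and is where the bulk of the bookkeeping lives; the conclusion about finite index in $\Out(A_\Lambda)$ is then a direct application of the structural results already collected in the preliminaries.
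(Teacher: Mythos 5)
The paper does not prove this statement; it is quoted directly from Wiedmer's article, and the surrounding text (end of Section 2.5) describes exactly the strategy you outline: build a transvection-free graph $\Lambda$ with $\mathrm{PSO}(A_\Lambda)\cong A_\Gamma$ via Theorem \ref{PSO-RAAG}, and conclude finite index from transvection-freeness. So your high-level plan is the correct one and coincides with the route the paper attributes to Wiedmer.

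However, as a proof your proposal has a genuine gap: the entire content of the theorem is the existence of the graph $\Lambda$, and you only assert that ``separator vertices, chosen generic enough'' can be arranged to satisfy your constraints (i)--(iii). Nothing in the preliminaries supplies such a gadget, and the constraints genuinely fight each other. For instance, in Theorem \ref{PSO-RAAG} a vertex $\widetilde v$ with $\Lambda\setminus\st(\widetilde v)$ having two components whose support graph $\Delta_{\widetilde v}$ has \emph{no} edge produces a type-2 vertex of $\Theta$, which is joined to every other vertex of $\Theta$; so unless you force every $\Delta_{\widetilde v}$ to be a single edge, you create unwanted central vertices in $\mathrm{PSO}(A_\Lambda)$. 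Likewise your auxiliary separator vertices must themselves have connected star-complements (or their contributions must be accounted for in $\Theta$), and adding edges to kill dominations $\lk(u)\subset\st(w)$ tends to change the components of $\Lambda\setminus\st(\widetilde v)$ and hence the SIL pattern. Also note that the edge rule you invoke (``adjacent iff the partial conjugations commute'') is the one from Theorem \ref{PSA-RAAG} for $\mathrm{PSA}$ of a SIL-free graph, not the rule governing $\Theta$ in Theorem \ref{PSO-RAAG}. Without an explicit construction of $\Lambda$ and a verification that the resulting $\Theta$ is isomorphic to $\Gamma$ and that $\Lambda$ admits no transvections, the argument is a programme rather than a proof; the final finite-index step, by contrast, is correct and immediate from the discussion in Section 2.5.
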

The following result shows that the virtual properties of the (outer) automorphism group of certain direct products can be deduced from those of the direct product of the (outer) automorphism groups of the individual factors. First, recall that a group $G$ is said to be \textbf{directly indecomposable} if there are no subgroups $G_1,G_2\leq G$ such that $G=G_1\times G_2$.
\begin{proposition}[\cite{Qiang} Theorem 2.5]\label{CenterAutomorphism}
	Let $G_1,\dots, G_n$ be centreless directly indecomposable groups and $k_1,\dots,k_n$ positive integers. Then:
	\begin{enumerate}
		\item $\Aut(G_1^{k_1}\times\cdots\times G_n^{k_n})=\prod\limits_{i=1}^n\left(\prod\limits_{k_i}\Aut(G_i^{})\right)\rtimes S_{k_i}$
		\item $\Out(G_1^{k_1}\times\cdots\times G_n^{k_n})=\prod\limits_{i=1}^n\left(\prod\limits_{k_i}\Out(G_i^{})\right)\rtimes S_{k_i}$
	\end{enumerate}
\end{proposition}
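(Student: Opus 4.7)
The plan is to exploit the rigidity of direct product decompositions for centreless groups. Set $G = G_1^{k_1} \times \cdots \times G_n^{k_n}$. First I would verify that $G$ is itself centreless, since $Z(G) = \prod_i Z(G_i)^{k_i} = 1$. The key lemma is that in a centreless group, if $G = A \times B$ then necessarily $A = C_G(B)$: given $x = (a,b) \in C_G(B)$, the element $b$ commutes with all of $B$, so $b \in Z(B) \subseteq Z(G) = 1$, forcing $x \in A$. From this one deduces that the set of indecomposable direct factors of $G$, viewed as subgroups, is uniquely determined; it must therefore coincide with the family $\{G_i^{(j)} : 1 \leq i \leq n, \ 1 \leq j \leq k_i\}$ obtained from the given decomposition. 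This is the centreless case of the Krull--Remak--Schmidt theorem.

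Next I would argue that any $\varphi \in \Aut(G)$ permutes this canonical set of factors. Indeed $\varphi(G_i^{(j)})$ is again an indecomposable direct factor of $G$ isomorphic to $G_i$; since (as is implicit in the formulation) the groups $G_1,\dots,G_n$ are pairwise non-isomorphic, $\varphi$ must permute the $k_i$ copies of $G_i$ amongst themselves for each $i$. This yields a well-defined homomorphism
\[ \rho : \Aut(G) \longrightarrow \prod_{i=1}^n S_{k_i}. \]
I would then identify $\ker \rho$ with the group of automorphisms preserving each factor individually. Such an automorphism restricts to an automorphism of each $G_i^{(j)}$ and is determined by these restrictions, so $\ker \rho \cong \prod_{i=1}^n \Aut(G_i)^{k_i}$.

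To split $\rho$, I would use the natural coordinate-permutation action of $\prod_i S_{k_i}$ on $G$, which clearly defines an automorphism of $G$ and is mapped to the identity permutation by $\rho$. This produces the semidirect product decomposition
\[ \Aut(G) \cong \prod_{i=1}^n \bigl( \Aut(G_i)^{k_i} \rtimes S_{k_i} \bigr), \]
which is assertion (1). For assertion (2), I would note that $\mathrm{Inn}(G) \cong G \cong \prod_i G_i^{k_i}$ (since $G$ is centreless), and under the isomorphism in (1) the inner automorphism subgroup is identified with $\prod_i \mathrm{Inn}(G_i)^{k_i}$ sitting inside $\prod_i \Aut(G_i)^{k_i}$. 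This subgroup is preserved by the coordinate-permutation action of $\prod_i S_{k_i}$, so passing to the quotient factor by factor yields the analogous semidirect product decomposition for $\Out(G)$.

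The main obstacle is the rigidity statement underlying the first paragraph: proving that the indecomposable direct factors of a centreless group are unique as subgroups, not merely up to isomorphism. The cleanest route is the centralizer characterization $A = C_G(B)$ combined with an induction on the number of factors, which mirrors the classical Krull--Remak--Schmidt argument but is substantially simpler in the centreless setting because one avoids the use of normal endomorphisms and the exchange lemma needed in general.
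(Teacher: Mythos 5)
The paper does not actually prove this proposition; it is imported verbatim as \cite{Qiang}, Theorem~2.5, so there is no internal argument to compare yours against. Your proposal is a correct and essentially standard self-contained proof of the cited result. Two points deserve emphasis. First, you are right that pairwise non-isomorphism of $G_1,\dots,G_n$ is implicitly assumed: if $G_1\cong G_2$ the right-hand side would be a proper subgroup of $\Aut(G)$, since automorphisms could mix the two blocks. Second, the genuine content is the step you flag as the ``main obstacle,'' and the centralizer identity $A=C_G(B)$ alone does not quite deliver it; the clean way to finish is via projections. Given two decompositions $G=\prod_i H_i=\prod_j K_j$ into nontrivial indecomposables, each projection $p_i(K_j)$ is a normal subgroup of $H_i$, the subgroups $p_i(K_1),\dots,p_i(K_l)$ pairwise commute and generate $H_i$, and since $Z(H_i)=1$ any two commuting normal subgroups generating $H_i$ intersect trivially; indecomposability of $H_i$ then forces exactly one $j$ with $p_i(K_j)=H_i$ and all other projections trivial, after which a counting/block argument identifies each $K_j$ with some $H_i$ as a subgroup. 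With that lemma in hand, the rest of your argument --- the permutation homomorphism $\rho$, the identification of $\ker\rho$ with $\prod_i\Aut(G_i)^{k_i}$, the splitting by coordinate permutations, and the passage to $\Out$ using $\mathrm{Inn}(G)\cong G$ and the invariance of $\prod_i\mathrm{Inn}(G_i)^{k_i}$ under the $S_{k_i}$-action --- is correct as written.
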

\subsection{Pure symmetric automorphism group}
The \textbf{pure symmetric automorphism group} $\mathrm{PSA}(A_\Gamma)$ of a right-angled Artin group (RAAG) $A_\Gamma$ is the subgroup of $\Aut(A_\Gamma)$ generated by all partial conjugations, while the \textbf{pure symmetric outer automorphism group} $\mathrm{PSO}(A_\Gamma)$ is its image in $\Out(A_\Gamma)$. It is easily verified that $\mathrm{PSA}(A_\Gamma)$ (respectively, $\mathrm{PSO}(A_\Gamma)$) has finite index in $\Aut(A_\Gamma)$ (respectively, $\Out(A_\Gamma)$) if and only if $A_\Gamma$ admits no transvections. In such cases, we say that $A_\Gamma$ is \textbf{transvection-free}.

\begin{theorem}[\cite{Toinet}, Theorem 3.1, \cite{Koban} Theorem 3.3]\label{Presentation PSA}
	Let $A_\Gamma$ be a RAAG. The group $\mathrm{PSA}(A_\Gamma)$ admits a finite presentation with generators given by the set of all partial conjugations, and with relations consisting entirely of commutators among these generators. As a consequence, the group $\mathrm{PSO}(A_\Gamma)$ admits a finite presentation obtained by adding, for each $v\in V(\Gamma)$, the relator $\prod_{C\in I_v}\pi^a_C=1$ where $I_v$ denotes the set of connected components of $\Gamma\setminus\st(v)$ and the product is taken over all partial conjugations associated to $v$.
\end{theorem}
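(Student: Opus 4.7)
The plan is to follow the peak-reduction strategy of Toinet and Koban. The first step will be to identify, by direct computation on the standard generators of $A_\Gamma$, the natural commutator relations among the partial conjugations: $[\pi^v_C,\pi^w_D]=1$ whenever the pairs $(v,C)$ and $(w,D)$ are disjoint in a suitable sense (for example, when $v$ and $w$ are joined by an edge and neither base vertex lies in the other's moving component, or when one component strictly contains the other together with its base vertex, and so on). Each such relation can be verified by applying both sides to every standard generator. Let $\mathcal R$ denote this finite set of commutator relations; the first task is to enumerate all the cases and check them.

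The main work will be to show that every relation in $\mathrm{PSA}(A_\Gamma)$ is a consequence of $\mathcal R$. The strategy is a peak-reduction argument on words $W=\pi_1\cdots\pi_k$ representing the identity automorphism. One introduces a complexity measure (for instance built from how the composition $\pi_1\cdots\pi_i$ acts on each standard generator, or from the cyclic word lengths of a suitable finite test set) and shows that using only $\mathcal R$ any non-trivial word can be shortened or rearranged so that the measure strictly decreases. The hard part will be the case analysis for two consecutive partial conjugations $\pi_i,\pi_{i+1}$: either they already commute via some relation of $\mathcal R$, or one can modify the pair into a shorter configuration whose composition agrees on the standard generators. Once peak reduction is established, every identity word reduces to the empty word modulo $\mathcal R$, yielding the desired finite presentation of $\mathrm{PSA}(A_\Gamma)$.

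For the $\mathrm{PSO}$ statement, the projection $\mathrm{PSA}(A_\Gamma)\twoheadrightarrow\mathrm{PSO}(A_\Gamma)$ has kernel $\mathrm{PSA}(A_\Gamma)\cap\mathrm{Inn}(A_\Gamma)$, and the task is to determine this kernel. For each $v\in V(\Gamma)$ the inner automorphism $\mathrm{ad}_v$ fixes every element of $\st(v)$ and sends every $u\in \Gamma\setminus\st(v)$ to $vuv^{-1}$; since the components $I_v$ of $\Gamma\setminus\st(v)$ partition the non-star vertices one obtains $\mathrm{ad}_v=\prod_{C\in I_v}\pi^v_C=:R_v$, so each $R_v$ lies in the kernel. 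Conversely, if $\mathrm{ad}_g$ is an element of $\mathrm{PSA}(A_\Gamma)$ one would write $g$ modulo $Z(A_\Gamma)$ as a word in the vertices and match its action on each standard generator against a composition of partial conjugations, forcing the image of $\mathrm{ad}_g$ in the abelianization of $\mathrm{PSA}(A_\Gamma)$ (generated by the $\pi^v_C$) to decompose as a $\mathbb{Z}$-combination of the $R_v$; this together with the commutator-only relations from the first part pins down the kernel as the normal closure of $\{R_v : v\in V(\Gamma)\}$.

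Combining the two parts, adding the $|V(\Gamma)|$ relators $R_v=1$ to the presentation of $\mathrm{PSA}(A_\Gamma)$ produces the claimed finite presentation of $\mathrm{PSO}(A_\Gamma)$; finiteness of both presentations is automatic since $V(\Gamma)$, the set of partial conjugations, and $\mathcal R$ are all finite. Throughout, the real obstacle is the peak-reduction lemma of step two, as both the relation-identification step and the kernel computation are essentially bookkeeping once that technical core is in place.
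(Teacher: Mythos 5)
This statement is not proved in the paper: it is imported verbatim from Toinet (Theorem 3.1) and Koban--Piggott (Theorem 3.3), and the text only remarks that ``these explicit presentations can be found in \cite{Koban}.'' So there is no in-paper argument to match your proposal against; the relevant comparison is with the cited sources, whose strategy you have correctly identified. Your outline of the $\mathrm{PSO}(A_\Gamma)$ step is sound and can in fact be simplified: the kernel of $\mathrm{PSA}(A_\Gamma)\twoheadrightarrow\mathrm{PSO}(A_\Gamma)$ is $\mathrm{PSA}(A_\Gamma)\cap\mathrm{Inn}(A_\Gamma)$, and since every inner automorphism of $A_\Gamma$ is a product of the $\mathrm{ad}_v$ with $v\in V(\Gamma)$, and each $\mathrm{ad}_v$ equals $R_v=\prod_{C\in I_v}\pi^v_C$ (the empty product when $v$ is central), the whole of $\mathrm{Inn}(A_\Gamma)$ already lies in $\mathrm{PSA}(A_\Gamma)$ and is generated by the $R_v$; normality of $\mathrm{Inn}$ in $\Aut(A_\Gamma)$ then gives the kernel as the normal closure of $\lbrace R_v\rbrace$ without any appeal to the abelianization.

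The genuine gap is that the entire mathematical content of the $\mathrm{PSA}$ half --- the assertion that the commutator relations $\mathcal{R}$ generate \emph{all} relations --- is deferred. You describe a peak-reduction scheme (``one introduces a complexity measure\dots'', ``the hard part will be the case analysis\dots'') but neither specify the complexity function, nor enumerate the commutation cases in $\mathcal{R}$, nor carry out the reduction of a peak. This is precisely the part that occupies the bulk of Toinet's and Koban--Piggott's papers, and it is not routine bookkeeping: one must show that whenever two partial conjugations fail to satisfy a relation in $\mathcal{R}$, any word representing the identity can still be rewritten so that the complexity strictly decreases, and the case analysis depends delicately on how the components $C$, $D$ and the stars of the base vertices intersect. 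As written, your submission is a correct and well-chosen proof \emph{plan} that mirrors the literature, but it does not constitute a proof; to make it one you would either have to execute the peak-reduction lemma in full or, as the paper does, cite \cite{Toinet} and \cite{Koban} for it.
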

These explicit presentations can be found in \cite{Koban}. Before moving to the $\Out(A_\Gamma)$ case, let us record a simple observation about $\mathrm{PSA}(A_\Gamma)$.

\begin{lemma}\label{PSA-NO-PC}
	Assume that $\Gamma$ is a simplicial graph such that $\Gamma\setminus\st(v)$ is connected for every $v\in V(\Gamma)$. Then $\Gamma$ has no SILs, and $\mathrm{PSA}(A_\Gamma)$ is isomorphic to $A_{\Gamma\setminus\Delta}$, where $\Delta$ is the induced subgraph of $\Gamma$ spanned by those vertices $v\in V(\Gamma)$ with $\st(v)=\Gamma$.
\end{lemma}

\begin{proof}
	For any pair of vertices $u,v\in V(\Gamma)$ we have $\Gamma\setminus\st(v)\subseteq \Gamma\setminus(\lk(u)\cap\lk(v))$.
	Since $\Gamma\setminus\st(v)$ is connected by assumption, it follows that $\Gamma\setminus(\lk(u)\cap\lk(v))$ is also connected. Hence $\Gamma$ has no SILs.
	
	On the other hand, every partial conjugation is an inner automorphism. Therefore, $\mathrm{PSA}(A_\Gamma)=\mathrm{Inn}(A_\Gamma)=A_\Gamma / Z(A_\Gamma)$, and the result follows from Proposition~\ref{CentreRAAG}.
\end{proof}

For the case of $\Out(A_\Gamma)$, Day and Wade studied the conditions under which $\mathrm{PSO}(A_\Gamma)$ is isomorphic to a RAAG. To state their result, we first introduce the following definition.
\begin{definition}
	Let $\Gamma$ be a simplicial graph and $v\in V(\Gamma)$. The  \textbf{support graph} $\Delta_v$ is defined as follows. The vertex set of $\Delta_v$ is in bijection with the set of connected components of $\Gamma\setminus \st(v)$. Two components $K$ and $L$ are joined by an edge if there exists a vertex $w\in V(K)$ such that $L$ is a connected component of both $\Gamma\setminus\st(w)$ and $\Gamma\setminus\st(v)$.  
\end{definition}
\begin{theorem}[Day-Wade, \cite{Day 4}]\label{PSO-RAAG} Let $A_\Gamma$ be a RAAG. Then, $\mathrm{PSO}(A_\Gamma)$ is isomorphic to a RAAG $A_\Theta$ if and only if $\Delta_v$ is a forest for every $v\in V(\Gamma)$. In that case, $\Theta$ is constructed as follows. For each $v\in V(\Gamma)$ choose a distinguished component $C_v$ of $\Delta_v$. The vertex set of $\Theta$ consists of:
\begin{enumerate}
	\item[1)] vertices of the form $v_e^u$ for each vertex $u\in V(\Gamma)$ and $e\in E(\Delta_ u)$ 
	\item[2)] vertices of the form  $v_C^u$ for each vertex $u\in V(\Gamma)$ and $C$ a connected component of $\Delta_v$ distinct from $C_v$.
\end{enumerate}
The graph $\Theta$ is given by the following edges:
\begin{itemize}
	\item The vertices of type $2)$ are connected to every other vertex of $\Theta$.
	\item  Two vertices $v_{e}^{u}$ and $v_{f}^{w}$ of type $1)$ are connected in $\Theta$ unless the following conditions are satisfied: $(u,w)$ forms a SIL, $e=\lbrace L,M\rbrace,f=\lbrace L,N\rbrace$, $w\in V(M),u\in V(N)$ and $L$ is a common connected component of $\Gamma\setminus\st (u)$ and $\Gamma\setminus\st(v)$.
\end{itemize}
\end{theorem}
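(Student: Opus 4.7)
The plan is to work directly with the finite presentation of $\mathrm{PSO}(A_\Gamma)$ furnished by Theorem \ref{Presentation PSA}: the generators are the partial conjugations $\pi_C^v$, and the defining relations consist of commutators between certain pairs of these generators together with one product relation $\prod_{C \in I_v} \pi_C^v = 1$ for each vertex $v \in V(\Gamma)$. The strategy is to apply Tietze transformations and determine exactly when this presentation rewrites as a RAAG presentation, and in the affirmative case to read off the graph $\Theta$ from the support graphs $\Delta_v$.

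For the forward direction, I would first use the product relation at each $v$ to eliminate one generator per vertex: fix a distinguished component $C_v$ of $\Delta_v$ and express one of the partial conjugations associated to vertices of $C_v$ as a word in the remaining $\pi_C^v$. The content of $\Delta_v$ is precisely the record of which partial conjugations at $v$ still fail to commute with partial conjugations at other vertices after this elimination: an edge $\lbrace K, L \rbrace$ of $\Delta_v$ witnesses a pair $\pi_K^v, \pi_L^w$ (with $w \in V(K)$) that fails to commute because $L$ is a common component of $\Gamma \setminus \st(v)$ and $\Gamma \setminus \st(w)$. When every $\Delta_v$ is a forest, acyclicity lets me rewrite generators iteratively along each tree without ambiguity, and every surviving relation collapses to a commutator. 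The resulting generating set is naturally in bijection with the type 1) vertices $v_e^u$ (one per edge of $\Delta_u$) and the type 2) vertices $v_C^u$ (one per non-distinguished tree of $\Delta_u$, which the rewriting reveals to be central), and matching the surviving non-commuting pairs with the edge set of $\Theta$ described in the statement completes this direction.

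For the converse, when some $\Delta_v$ contains a cycle, the plan is to exhibit a relation in $\mathrm{PSO}(A_\Gamma)$ that cannot be derived from commutators alone: traversing the cycle combines the non-commutation data along each edge with the product relation at $v$ to force a non-trivial word to equal the identity in a way incompatible with any RAAG presentation. This obstruction can be detected by an invariant stable under isomorphism, for instance by computing the abelianization together with a lower central series quotient and comparing with what a RAAG on the conjectured vertex set would produce.

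The main obstacle is the forward direction: verifying that the explicit edge set of $\Theta$ matches, on the nose, the commutation pattern that survives after the full Tietze rewriting. This is a delicate combinatorial bookkeeping problem when two vertices $u, w$ both have non-trivial support graphs and several of their partial conjugations interact simultaneously; pinning down that the interaction is captured exactly by whether $(u,w)$ forms a SIL and by the prescribed component containments is where the forest hypothesis on each $\Delta_v$ is essential, since it removes precisely the rewriting ambiguities that a cycle in some $\Delta_v$ would otherwise introduce.
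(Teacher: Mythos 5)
The paper does not prove this statement: it is quoted from Day--Wade \cite{Day 4} as a black box, so there is no internal proof to compare against; I can only judge your proposal against the argument in the cited source. The genuine gap is in the converse direction. To show that $\mathrm{PSO}(A_\Gamma)$ is not isomorphic to a RAAG when some $\Delta_v$ contains a cycle, it is not enough to exhibit ``a relation that cannot be derived from commutators alone'' in the given presentation --- a group can fail to admit a RAAG presentation on one generating set and still be isomorphic to a RAAG on another. You must rule out every possible RAAG structure, and the invariants you propose (abelianization together with lower central series quotients) are not shown to do this: the abelianization here is free abelian in any case, and lower central series quotients of groups presented by commutator-type relations are notoriously poor at separating such groups from RAAGs. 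Day and Wade's actual argument uses the BNS invariant (recorded in this paper as Theorem \ref{SigmaPSO}): the complement of $\Sigma^1$ of a RAAG is a union of subspheres of a very restricted combinatorial type, whereas the $\delta$-p-sets arising from a cycle in some $\Delta_v$ produce a subsphere arrangement that no RAAG can realize. Since $\Sigma^1$ is an isomorphism invariant, this genuinely obstructs any RAAG structure; without an invariant of comparable strength your converse does not go through.

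The forward direction is more reasonable in outline --- the generator count after eliminating one partial conjugation per vertex via the product relation does match the vertex set of $\Theta$ (one generator per edge of each forest $\Delta_u$, plus one per non-distinguished component) --- but you explicitly defer the verification that the surviving relations are exactly the commutators prescribed by the edge set of $\Theta$, and that bookkeeping is the substance of this half of the theorem rather than a routine check. As written, the proposal identifies a sensible presentation to start from but establishes neither implication.
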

Moreover, using this result, Wiedmer showed in \cite{Wiedmer} that every RAAG arises  as the pure symmetric outer automorphism of a RAAG associated to a larger graph. In other words, for any RAAG $A_\Gamma$ there exists a simplicial graph $\Theta$ such tat $A_\Gamma\cong\mathrm{PSO}(A_\Theta)$. The construction of the graph $\Theta$ is explicit and depends only on $\Gamma$. Furthermore, if $\Gamma$ has the property that $\Gamma\setminus\st(v)$ has at most $2$ connected components for every $v\in V(\Gamma)$ then, the result of Day and Wade applies directly. Indeed, in this case, each support graph has at most $2$ vertices, and any graph with at most $2$ vertices is necessarily a forest.
\section{Algebraic fibring}

\subsection{Pure symmetric automorphism group}\label{SymetricAUT}
Given a RAAG $A_\Gamma$, the Bieri–Neumann–Strebel invariants (BNS-Invariants) $\Sigma^1(\mathrm{PSA}(A_\Gamma))$ and $\Sigma^1(\mathrm{PSO}(A_\Gamma))$ provide powerful tools for detecting algebraic fibring properties of these groups. These invariants were computed in full generality by Koban and Piggott \cite{Koban} for $\mathrm{PSA}(A_\Gamma)$ and by Day and Wade \cite{Day 4} for $\mathrm{PSO}(A_\Gamma)$. In what follows, we briefly recall the definition of the BNSR invariants and use them to investigate the virtual algebraic fibring of pure symmetric (outer) automorphism groups of RAAGs.

\begin{definition}
	Let $G$ be a finitely generated group and $S$ a generating set of $G$. The \textbf{character sphere} of $G$ is defined as: 
	$$S(G)=\left(\mathrm{Hom}(G,\mathbb{R})\setminus\lbrace0\rbrace\right)/\sim$$
	where $\chi_1\sim\chi_2$ iff there exists $t>0$ such that $\chi_1=t\chi_2$. The \textbf{BNS invariant} of $G$ is defined as:
	$$\Sigma^1(G)=\lbrace[\chi]\in S(G)\mid\mathrm{Cay}_\chi(G,S)\text{ is connected}\rbrace$$
	Here, $\mathrm{Cay}(G,S)$ is the Cayley graph of $G$ with respect to the generating set $S$ and    $\mathrm{Cay}_\chi(G,S)$ is the subgraph of the Cayley graph induced by the vertices $v$ with $\chi(v)\geq 0$.
\end{definition}
The invariant is well defined, i.e. it does not depend on the choice of generating set $S$ [c.f. \cite{Strebel} Theorem A2.3]. The reason why $\Sigma^1(G)$ encodes the fibring properties of $G$ is the following central theorem:
\begin{theorem}[Corollary A4.3 \cite{Strebel}]\label{Sigma1Fibring} Let $G$ be finitely generated and $\chi:G\to\mathbb{Z}$. Then, $\ker(\chi)$ is finitely generated if and only if $[\chi],[-\chi]\in\Sigma^1(G)$.
\end{theorem}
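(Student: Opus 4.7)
The plan is to exploit the splitting $G = K \rtimes \langle t \rangle$ obtained by choosing $t \in G$ with $\chi(t) = 1$ (we may assume $\chi$ surjective after rescaling, and the surjection splits because $\mathbb{Z}$ is free), where $K = \ker(\chi)$. Connectivity of the positive half-subgraph $\mathrm{Cay}_\chi(G,S)$ then translates into algebraic control of how $\langle t\rangle$-conjugation acts on $K$. Throughout I use that $\Sigma^1(G)$ is independent of the finite generating set $S$, as recorded in the excerpt, so I may replace $S$ by any convenient one.

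For the forward implication, suppose $K$ is finitely generated by $k_1,\dots,k_m$ and take $S = \{t, k_1, \dots, k_m\}$. Every $g \in G^+ := \chi^{-1}(\mathbb{Z}_{\geq 0})$ has a normal form $g = t^n k$ with $n = \chi(g) \geq 0$ and $k \in K$, and I exhibit a path from $1$ to $g$ inside $\mathrm{Cay}_\chi(G,S)$ by first traversing $1 \to t \to t^2 \to \cdots \to t^n$ and then right-multiplying one at a time by letters of a word representing $k$. All intermediate vertices have $\chi$-value $n \geq 0$, so the path stays in $G^+$; this shows $[\chi] \in \Sigma^1(G)$, and the mirror argument using negative powers of $t$ yields $[-\chi] \in \Sigma^1(G)$.

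For the converse, fix any finite generating set $S \ni t$ and, for each $s \in S$, set $k_s = s t^{-\chi(s)} \in K$. A standard rewriting argument (slide every occurrence of $t^{\pm 1}$ in a word representing an element of $K$ to the right) shows $K$ is generated as a group by $\mathcal{F} = \{t^n k_s t^{-n} : n \in \mathbb{Z},\ s \in S\}$, so it suffices to find $N$ with $K = \langle t^n k_s t^{-n} : |n| \leq N,\ s \in S\rangle$. The hypothesis $[\chi] \in \Sigma^1(G)$ provides, for each $s \in S$ with $\chi(s) \geq 0$, a path in $\mathrm{Cay}_\chi(G,S)$ from $1$ to $s$ of length bounded by some uniform constant $L$ (maximum over the finite set $S$); $G$-equivariance of the Cayley graph then yields, by left translation by $t^n$, a connecting path from $t^n$ to $t^n s$ of the same length inside $G^+$. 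Reading off this path and conjugating by an appropriate power of $t$ expresses $t^{n+1} k_s t^{-(n+1)}$ as a word in $t^i k_{s'} t^{-i}$ with $i \leq n + C$ for a constant $C$ depending only on $L$ and $\max_{s \in S} |\chi(s)|$. An induction on $n$ then absorbs all positive-$n$ conjugates into a finite collection, and the symmetric statement from $[-\chi] \in \Sigma^1(G)$ does the same for $n \leq 0$, yielding the desired uniform $N$.

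The main obstacle is this inductive absorption step in the converse: the purely geometric statement that $\mathrm{Cay}_\chi(G,S)$ is connected has to be upgraded to a uniform bound on the length of a connecting path between $1$ and each $s \in S \cap G^+$, and then propagated along the $\langle t\rangle$-orbit via equivariance to control how translated relations rewrite high-level $t$-conjugates in terms of lower-level ones. This is precisely where the hypothesis requires both halves of the Cayley graph to be connected rather than only one: a single half bounds the $t$-orbit in only one direction, and without a bound in both directions the family $\mathcal{F}$ cannot be collapsed to a finite set.
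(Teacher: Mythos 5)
The paper offers no proof of this statement --- it is quoted directly from Strebel's notes (Corollary A4.3) --- so your attempt can only be judged on its own terms. Your forward direction is correct, and the skeleton of your converse (Reidemeister--Schreier generators $t^n k_s t^{-n}$ for the transversal $\{t^n\}$, followed by an inductive absorption of levels) is the standard and right one. But the absorption step, which you yourself identify as the crux, fails as written. A path in $\mathrm{Cay}_\chi(G,S)$ from $1$ to $s$ has all vertices at $\chi$-level $\geq 0$; after left-translating by $t^n$ and performing the Schreier rewriting, the levels $i$ of the conjugates $t^i k_{s'} t^{-i}$ that appear are precisely the $\chi$-values of the vertices of the translated path, hence lie in $[n, n+C]$. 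So what you actually obtain is that the level-$n$ generator lies in the subgroup generated by levels \emph{at least} $n$ --- bounded below, not above --- and your stated bound $i \leq n + C$ contains no strict decrease (the tautological factor at level $n$ is among those allowed). The induction ``absorbing all positive-$n$ conjugates'' therefore never closes; moreover the half of the hypothesis you invoke for it, $[\chi]\in\Sigma^1(G)$ (connectivity of the $\chi\geq 0$ part), is the one that absorbs the \emph{negative} levels upward, not the positive ones downward.

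The missing idea is a conjugation trick that converts connectivity into a \emph{strict} level decrease. Set $N_{\leq m} = \langle t^i k_s t^{-i} : i \leq m\rangle$. Connectivity of the $\chi \leq 0$ half (i.e.\ $[-\chi]\in\Sigma^1(G)$) shows $K = N_{\leq 0}$, by rewriting a word for $w \in K$ all of whose prefixes have $\chi \leq 0$; conjugating this identity by $t^{-1}$ gives $K = N_{\leq -1}$, so each of the finitely many $k_s$ lies in $\langle t^i k_{s'} t^{-i} : -M \leq i \leq -1\rangle$ for a uniform $M$, with all levels \emph{strictly} negative. Conjugating by $t^n$ then expresses the level-$n$ generator in terms of levels in $[n-M, n-1]$, and this strict decrease is what makes the induction on $n \geq 0$ terminate in the finite window $[-M,-1]$; the symmetric argument with $N_{\geq m}$ and $[\chi]\in\Sigma^1(G)$ absorbs the levels $n \leq 0$ into a window $[1,M']$, whence $K = \langle t^i k_s t^{-i} : -M \leq i \leq M'\rangle$ is finitely generated. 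You should also handle generators with $\chi(s) < 0$, e.g.\ by closing $S$ under inversion and using $(t^m k_s t^{-m})^{-1} = t^{m+\chi(s)} k_{s^{-1}} t^{-(m+\chi(s))}$.
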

Characters of the form $\chi:G\to\mathbb{Z}$ are known as \textbf{discrete characters}. The invariant $\Sigma^1(G)$ is \textbf{symmetric} if $[\chi]\in\Sigma^1(G)$ if and only if $[-\chi]\in\Sigma^1(G)$. 
\begin{corollary}\label{EmptySigma1} Let $G$ be a finitely generated group such that $\Sigma^1(G)$ is symmetric. Then, $G$ fibres if and only if $\Sigma^1(G)\neq\emptyset$.
\end{corollary}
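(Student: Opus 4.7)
The plan is to reduce the statement directly to Theorem \ref{Sigma1Fibring}. One implication is immediate: if $G$ fibres then there is a discrete epimorphism $\chi : G \to \mathbb{Z}$ with finitely generated kernel, and Theorem \ref{Sigma1Fibring} places both $[\chi]$ and $[-\chi]$ in $\Sigma^1(G)$, so the invariant is non-empty. Note that the symmetry hypothesis is not required for this direction.

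For the converse, suppose $\Sigma^1(G) \neq \emptyset$ and fix a class $[\chi] \in \Sigma^1(G)$. By the symmetry hypothesis, $[-\chi] \in \Sigma^1(G)$ as well. The character $\chi$ need not be discrete, so Theorem \ref{Sigma1Fibring} cannot be applied directly. To remedy this I would invoke two standard facts from the BNS theory, both available in \cite{Strebel}: the set $\Sigma^1(G)$ is an open subset of the character sphere $S(G)$, and the classes of discrete characters form a dense subset of $S(G)$. Combining openness with the symmetry hypothesis, there are open neighbourhoods of $[\chi]$ and $[-\chi]$ entirely contained in $\Sigma^1(G)$; by density of discrete characters, one can choose a non-zero homomorphism $\psi : G \to \mathbb{Z}$ such that $[\psi]$ lies in the former neighbourhood and $[-\psi]$ in the latter, hence $[\psi], [-\psi] \in \Sigma^1(G)$. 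After rescaling $\psi$ so that it surjects onto $\mathbb{Z}$, Theorem \ref{Sigma1Fibring} guarantees that $\ker(\psi)$ is finitely generated, so $\psi$ witnesses that $G$ fibres.

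The only delicate point is the approximation step used to pass from a general character in $\Sigma^1(G)$ to a discrete one inside the same open set. Openness of $\Sigma^1$ and density of rational characters are classical, and the symmetry hypothesis is precisely what ensures that such a perturbed discrete $\psi$ simultaneously satisfies $[\psi], [-\psi] \in \Sigma^1(G)$, which is the exact input required by Theorem \ref{Sigma1Fibring}. The remainder of the argument is a matter of bookkeeping.
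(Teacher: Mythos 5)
Your proposal is correct and follows essentially the same route as the paper: the forward direction is immediate from Theorem \ref{Sigma1Fibring}, and the converse uses openness of $\Sigma^1(G)$ together with density of discrete characters to replace $\chi$ by a discrete character, then symmetry to get both signs into $\Sigma^1(G)$. The only (harmless) difference is that you apply symmetry to $[\chi]$ first and then need a simultaneous approximation of $[\chi]$ and $[-\chi]$; the paper avoids this delicacy by first finding a discrete $[\overline{\chi}]\in\Sigma^1(G)$ and only then invoking symmetry on $[\overline{\chi}]$ itself.
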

\begin{proof}
	If $G$ fibres, then by Theorem \ref{Sigma1Fibring}, we have $\Sigma^1(G)\neq\emptyset$ . Conversely, suppose $\Sigma^1(G)\neq\emptyset$. Then, there exists a character $\chi:G\to\mathbb{R}$ such that $[\chi]\in\Sigma^1(G)$. Since the set of discrete characters is dense in the character sphere $S(G)$ (cf. \cite{Strebel} Lemma B3.24) and $\Sigma^1(G)$ is an open subset of $S(G)$ (cf. \cite{Bieri-Renz}) it follows that there is a  discrete character $\overline{\chi}:G\to\mathbb{Z}$ such that  $[\overline{\chi}]\in\Sigma^1(G)$. Moreover, as $\Sigma^1(G)$ is symmetric, we also have $[\overline{\chi}],[-\overline{\chi}]\in \Sigma^1(G)$. Hence, by Theorem \ref{Sigma1Fibring}, it follows that $G$ fibres.
\end{proof}
To define $\Sigma^1(\mathrm{PSA}(A_\Gamma))$ and $\Sigma^1(\mathrm{PSO}(A_\Gamma))$ we need to introduce a previous notion.
\begin{definition}
	Let $A_\Gamma$ be a RAAG and $X$ the set of all partial conjugations. A subset $S\subset X$ is a \textbf{p-set} if:
	\begin{itemize}
		\item For each $v\in V(\Gamma)$ there is at most one partial conjugation $\pi^v_K\in S$
		\item $S$ has a non-trivial partition $S=S_1\cup S_2$ such that for every $\pi^v_K\in S_1$ and $\pi^w_L\in S_2$ we have $v\in V(L)$ and $w\in V(K)$.
	\end{itemize}
	A subset $S\subset X$ is a \textbf{$\delta$-p-set} if:
	\begin{itemize}
		\item For each $v\in V(\Gamma)$ there are exactly two or zero partial conjugations of the form $\pi^v_K$ in $S$.
		\item $S$ has a non-trivial partition $S=S_1\cup S_2$ such that for every $\pi^v_K\in S_1$ and $\pi^w_L\in S_2$ we have $v\in V(L)$ or $w\in V(K)$ or $L=K$.
	\end{itemize}
\end{definition}
\begin{theorem}[Koban-Piggott \cite{Koban}, Day-Wade \cite{Day 4}]\label{SigmaPSO}
	Let $A_\Gamma$ be a RAAG and $\chi_1:\mathrm{PSA}(A_\Gamma)\to\mathbb{R},\chi_2:\mathrm{PSO}(A_\Gamma)\to\mathbb{R}$ two non-zero characters. Then:
	\begin{enumerate}
		\item $[\chi_1]\notin\Sigma^1(\mathrm{PSA}(A_\Gamma))$ if and only if one of the following holds:
		\begin{itemize}
			\item $\chi_1$ is non-trivial on some inner automorphism and the support of $\chi_1$ is a subset of a p-set.
			\item $\chi_1$ is trivial on every inner automorphism and the support of $\chi_1$ is a subset of a $\delta$-p-set.
		\end{itemize}
		\item $[\chi_2]\notin\Sigma^1(\mathrm{PSO}(A_\Gamma))$ if and only if  the support of $\chi_2$ is a subset of a $\delta$-p-set.
\end{enumerate}
\end{theorem}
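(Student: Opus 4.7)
The plan is to prove both parts by applying Brown's criterion for $\Sigma^1$ to the explicit commutator presentations of $\mathrm{PSA}(A_\Gamma)$ and $\mathrm{PSO}(A_\Gamma)$ provided by Theorem~\ref{Presentation PSA}. Recall Brown's criterion: for a finitely presented group $G = \langle X \mid R\rangle$ and character $\chi: G\to\mathbb{R}$, one has $[\chi]\in\Sigma^1(G)$ iff the full subgraph of $\mathrm{Cay}(G,X)$ spanned by $\{g:\chi(g)\geq 0\}$ is connected, which can be checked combinatorially by reducing each generator $x$ with $\chi(x)<0$ to a word on $\{y:\chi(y)\geq 0\}$ using the relators $R$. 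Since every relator of $\mathrm{PSA}(A_\Gamma)$ is a commutator of two partial conjugations, and the only additional relators in $\mathrm{PSO}(A_\Gamma)$ are the inner-automorphism words $\iota_v=\prod_{C\in I_v}\pi^v_C$, the entire analysis collapses to combinatorics on the commutation graph of the partial conjugations together with the inner-automorphism relators.

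For part~(1) I would first handle the sufficiency direction: assume $\mathrm{supp}(\chi_1)$ is contained in a p-set (resp.\ $\delta$-p-set) with splitting $S = S_1 \sqcup S_2$. Define a ``type function'' $\tau$ on generators by sending $S_1\mapsto +1$, $S_2\mapsto -1$, and everything in $\mathrm{supp}(\chi_1)^c$ together with the values forced by inner-automorphism relators to $0$. The commutation structure of partial conjugations — specifically, the explicit criterion from Toinet that $\pi^v_K$ and $\pi^w_L$ commute iff $v \in V(L)$ and $w\in V(K)$, or $K=L$, or they interact only trivially — together with the (δ-)p-set axioms show that $\tau$ is well-defined on the quotient identifying relators to $1$ when $\chi_1$ vanishes on $\iota_v$ (which yields the $\delta$-refinement), respectively without this identification in the p-set case, and that $\tau$ obstructs a Brown-style reduction of any negative generator. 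For the necessity direction: if $[\chi_1]\notin \Sigma^1$, Brown's algorithm produces a non-trivial partition of $\mathrm{supp}(\chi_1)$ and a case analysis of which commutator relators fail to bridge $S_1$ with $S_2$ yields exactly the axioms of a p-set or $\delta$-p-set, with the dichotomy governed precisely by whether some $\iota_v$ has $\chi_1(\iota_v)\neq 0$, since such a living inner-automorphism relator forces at most one $\pi^v_C$ to appear in the support.

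For part~(2), use the short exact sequence $1 \to \mathrm{Inn}(A_\Gamma)\to \mathrm{PSA}(A_\Gamma)\to \mathrm{PSO}(A_\Gamma)\to 1$. Since $\mathrm{Inn}(A_\Gamma)\cong A_\Gamma/\mathrm{Z}(A_\Gamma)$ is finitely generated, any character $\chi_2:\mathrm{PSO}(A_\Gamma)\to\mathbb{R}$ pulls back to $\widetilde\chi_1=\chi_2\circ\pi:\mathrm{PSA}(A_\Gamma)\to\mathbb{R}$ which vanishes on every inner automorphism, and the kernels $\ker(\widetilde\chi_1)$ and $\ker(\chi_2)$ are simultaneously finitely generated (they differ by the finitely generated subgroup $\mathrm{Inn}(A_\Gamma)$). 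By Theorem~\ref{Sigma1Fibring}, density of discrete characters in $S(G)$, and openness of $\Sigma^1$, it follows that $[\chi_2]\in\Sigma^1(\mathrm{PSO}(A_\Gamma))$ iff $[\widetilde\chi_1]\in\Sigma^1(\mathrm{PSA}(A_\Gamma))$. Since $\widetilde\chi_1$ is automatically trivial on all inner automorphisms, case (b) of part~(1) applies and gives precisely the $\delta$-p-set characterization. As a sanity check, the support of $\widetilde\chi_1$ equals the support of $\chi_2$ under the canonical bijection of generators.

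The main obstacle is the necessity half of part~(1): one must distil from the failure of Brown's algorithm the \emph{exact} partition $S = S_1\sqcup S_2$ and show that every pair $(\pi^v_K,\pi^w_L)\in S_1\times S_2$ satisfies the swap condition $v\in V(L)$ and $w\in V(K)$ (or, in the $\delta$-case, the additional possibility $K = L$). This requires a careful bookkeeping of chains of pairwise-commuting partial conjugations — to rule out indirect Brown reductions — and a delicate treatment of the role of the ``living'' inner-automorphism relators $\iota_v$, whose behaviour under $\chi_1$ dictates the dichotomy between the p-set and $\delta$-p-set conditions. Once this combinatorial core is established, both parts fall out uniformly.
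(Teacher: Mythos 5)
This statement is not proved in the paper at all: it is quoted verbatim from Koban--Piggott \cite{Koban} (for $\mathrm{PSA}(A_\Gamma)$) and Day--Wade \cite{Day 4} (for $\mathrm{PSO}(A_\Gamma)$), so there is no in-paper argument to compare yours against. Judged on its own, your sketch has the right overall architecture --- work from the commutator presentation of Theorem~\ref{Presentation PSA}, and reduce the $\mathrm{PSO}$ case to the $\mathrm{PSA}$ case via the finitely generated kernel $\mathrm{Inn}(A_\Gamma)\cong A_\Gamma/\mathrm{Z}(A_\Gamma)$ --- but the core of part (1) is not actually proved. The ``Brown's criterion'' you invoke is not a theorem in the form you state it: connectivity of $\mathrm{Cay}_\chi(G,S)$ is the \emph{definition} of membership in $\Sigma^1$, and the rewriting technique you describe (the Bieri--Strebel $\Sigma$-criterion, Theorem A4.1 in \cite{Strebel}) is only a \emph{sufficient} condition for $[\chi]\in\Sigma^1(G)$. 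Consequently, your sufficiency argument (``the type function obstructs a Brown-style reduction'') does not prove $[\chi_1]\notin\Sigma^1$ --- failure to find a reduction never certifies disconnection; one must instead exhibit a splitting of $\mathrm{PSA}(A_\Gamma)$ as an amalgam or HNN extension over a $\chi$-trivial subgroup adapted to the partition $S_1\sqcup S_2$ (this is what the p-set axioms are designed to produce, and it is the substance of the Koban--Piggott proof). Symmetrically, your necessity argument (``if $[\chi_1]\notin\Sigma^1$, Brown's algorithm produces a partition'') appeals to a converse that does not exist; the actual proof runs the other way, showing directly that whenever the support is \emph{not} contained in a (${\delta}$-)p-set the positive part of the Cayley graph is connected, by an explicit inductive rewriting of words.

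There is also a smaller but genuine gap in part (2). You deduce $[\chi_2]\in\Sigma^1(\mathrm{PSO}(A_\Gamma))\iff[\widetilde\chi_1]\in\Sigma^1(\mathrm{PSA}(A_\Gamma))$ from Theorem~\ref{Sigma1Fibring} together with the simultaneous finite generation of the two kernels. Theorem~\ref{Sigma1Fibring} only controls the symmetric condition $\{[\chi],[-\chi]\}\subset\Sigma^1$, so it cannot give the individual equivalence you need (and at this stage of the argument you are not yet entitled to assume $\Sigma^1$ is symmetric, since that is a consequence of the theorem being proved). The correct tool is the quotient formula for $\Sigma^1$: for an epimorphism $\pi\colon G\to Q$ with finitely generated kernel and $\chi\in\mathrm{Hom}(Q,\mathbb{R})$, one has $[\chi\circ\pi]\in\Sigma^1(G)$ if and only if $[\chi]\in\Sigma^1(Q)$ (see \cite{Strebel}, the propositions on $\Sigma^1$ of quotients). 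With that substitution the reduction of part (2) to case (b) of part (1) is sound, since $\widetilde\chi_1$ is trivial on $\mathrm{Inn}(A_\Gamma)$ and has the same support as $\chi_2$; but part (1) itself remains unproved by your argument.
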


As a consequence of Theorem~\ref{SigmaPSO}, both $\mathrm{PSA}(A_\Gamma)$ and $\mathrm{PSO}(A_\Gamma)$ have symmetric $\Sigma^1$-invariants. In particular, their algebraic fibring depends solely on whether $\Sigma^1(\mathrm{PSA}(A_\Gamma))$ and $\Sigma^1(\mathrm{PSO}(A_\Gamma))$ are non-empty.

\begin{proposition}\label{FibringPSA} Let $A_\Gamma$ be a RAAG. Then, $\mathrm{PSA}(A_\Gamma)$ fibres if and only if virtually fibres and this happens if and only if $A_\Gamma\ncong \mathbb{Z}^n,\mathbb{Z}^n\ast\mathbb{Z}^m$ for any $n,m\geq 1$.
\end{proposition}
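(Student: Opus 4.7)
The plan is to combine the BNS invariant computation in Theorem~\ref{SigmaPSO}.1 with Corollary~\ref{EmptySigma1}. The characterisation there of classes $[\chi] \notin \Sigma^1(\mathrm{PSA}(A_\Gamma))$ depends only on the support of $\chi$, so $\Sigma^1(\mathrm{PSA}(A_\Gamma))$ is symmetric, and by Corollary~\ref{EmptySigma1} the group $\mathrm{PSA}(A_\Gamma)$ fibres if and only if $\Sigma^1(\mathrm{PSA}(A_\Gamma)) \neq \emptyset$. The equivalence between fibring and virtual fibring then follows by exhibiting an honest fibration whenever the structural condition on $A_\Gamma$ holds, and by ruling out virtual fibring in the excluded families.

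I dispose of the excluded families first. If $A_\Gamma \cong \mathbb{Z}^n$ then $\Gamma$ is a clique, no vertex admits a partial conjugation, and $\mathrm{PSA}(A_\Gamma)$ is trivial and so does not virtually fibre. If $A_\Gamma \cong \mathbb{Z}^n \ast \mathbb{Z}^m$ then $\Gamma$ is the disjoint union of two cliques and each partial conjugation $\pi^v_C$ coincides with $\iota_v$, since $C$ is the unique component of $\Gamma \setminus \st(v)$ and $v$ commutes with every vertex of $\st(v)$; hence $\mathrm{PSA}(A_\Gamma) = \mathrm{Inn}(A_\Gamma) \cong A_\Gamma$ (the centre being trivial), which fails to virtually fibre by Corollary~\ref{FibringRAAGs} because $\Gamma$ is disconnected.

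For the non-excluded case I take the character $\chi \colon \mathrm{PSA}(A_\Gamma) \to \mathbb{Z}$ sending every partial conjugation to $1$; this is well-defined by Theorem~\ref{Presentation PSA}, since all defining relations are commutators. Since $A_\Gamma \ncong \mathbb{Z}^n$ forces the existence of a non-central vertex $v$, we have $\chi(\iota_v) \geq 1$, so $\chi$ is non-trivial on some inner automorphism. By Theorem~\ref{SigmaPSO}.1 it then suffices to show that the support of $\chi$, namely the full set of all partial conjugations, is not contained in any p-set. If some vertex $v$ has $\Gamma \setminus \st(v)$ disconnected, this support already contains two partial conjugations at $v$, violating the first axiom of a p-set. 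Otherwise $\Gamma \setminus \st(v)$ is connected or empty for every $v \in V(\Gamma)$, so Corollary~\ref{PSA-NO-PC} identifies $\mathrm{PSA}(A_\Gamma)$ with $A_\Gamma$; Lemma~\ref{NoSILs} combined with the assumption $A_\Gamma \ncong \mathbb{Z}^n \ast \mathbb{Z}^m$ forces $\Gamma$ to be connected, and hence $\mathrm{PSA}(A_\Gamma) \cong A_\Gamma$ fibres by Corollary~\ref{FibringRAAGs}. The main obstacle is to rule out a partition witnessing the p-set property in this delicate sub-case; the argument sidesteps it by reducing to the structural identification of Corollary~\ref{PSA-NO-PC} and then invoking the connectedness criterion for RAAG fibring.
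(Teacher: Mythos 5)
Your proof is correct and follows essentially the same route as the paper: the case split on whether some $\Gamma\setminus\st(v)$ is disconnected, the use of Theorem~\ref{SigmaPSO}.1 together with Corollary~\ref{EmptySigma1} in the first case, and the reduction via Corollary~\ref{PSA-NO-PC} and Lemma~\ref{NoSILs} to Corollary~\ref{FibringRAAGs} in the second. The only (harmless) differences are that you use the uniform ``all ones'' character instead of the paper's character supported on two vertices, and that you dispose of $\mathbb{Z}^n\ast\mathbb{Z}^m$ directly by identifying $\mathrm{PSA}(A_\Gamma)$ with $\mathrm{Inn}(A_\Gamma)\cong A_\Gamma$ rather than routing through the no-SILs lemma.
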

\begin{proof}
	 If $A_\Gamma=\mathbb{Z}^n$ for some $n\geq 1$ then $\mathrm{PSA}(A_\Gamma)$ is trivial, so we may assume that $A_\Gamma\ncong \mathbb{Z}^n$ for any $n\geq 1$. Assume that there exists $v\in V(\Gamma)$ such that $\Gamma\setminus\st(v)$ is disconnected and let $L,K$ be two connected components of $\Gamma\setminus\st(v)$. Let $w\in V(\Gamma)\setminus\lbrace v\rbrace$ such that $\Gamma\setminus\st(w)\neq\emptyset$. Such a vertex exists because if $\Gamma\setminus\st(w)=\emptyset$ for every  $w\in V(\Gamma)\setminus\lbrace v\rbrace$ then $\Gamma\setminus\lbrace v\rbrace$ is a complete graph, which would imply that $\Gamma\setminus\st(v)$ is either connected or empty, a contradiction. Define the discrete character $\chi:\mathrm{PSA}(A_\Gamma)\to\mathbb{Z}$ by $\chi(\pi_L^v)=1,\chi(\pi_K^v)=-1,\chi(\pi^w_C)=1$ for every connected component $C$ of $\Gamma\setminus\st(w)$ and $\chi(\pi^u_C)=0$ for any other partial conjugation. By Theorem \ref{Presentation PSA} all the relations of $\mathrm{PSA}(A_\Gamma)$ are preserved under $\chi$, so it is a well-defined character. The character $\chi$ is non-trivial on the inner automorphism induced by conjugation with $w$. Moreover, there are two distinct partial conjugations associated with $v$ with non-zero $\chi$-value, implying that the support of $\chi$ is not the subset of a p-set. Therefore, by Theorem \ref{SigmaPSO}.1, it follows that $[\chi]\in\Sigma^1(\mathrm{PSA}(A_\Gamma))$. Consequently, $\mathrm{PSA}(A_\Gamma)$ fibres by Corollary \ref{EmptySigma1}.

	  Therefore, we may assume that $\Gamma\setminus\st(v)$ is connected for every $v\in V(\Gamma)$. By Corollary \ref{PSA-NO-PC}, $\Gamma$ has no SILs and $\mathrm{PSA}(A_\Gamma)$ is isomorphic to $A_{\Gamma\setminus\Delta}$, where $\Delta$ is the subgraph of $\Gamma$ generated by the $v\in V(\Gamma)$ such that $\mathrm{st}(v)=\Gamma$. By Lemma \ref{NoSILs} either $A_\Gamma$ is connected or $A_\Gamma\cong\mathbb{Z}^n\ast\mathbb{Z}^m$ for some $m,n\geq 1$.  In the latter case, it follows by Theorem 4.15.ii in \cite{Kammeyer} that $\beta_1^{(2)}(\mathbb{Z}^n\ast\mathbb{Z}^m)=1$ and thus $A_\Gamma$ does not virtually fibre. If $\Gamma$ is connected $A_\Gamma$ fibres due to Corollary \ref{FibringRAAGs}.
\end{proof}

\begin{proposition}\label{FibringPSO} Let $A_\Gamma$ be a RAAG. Then, $\mathrm{PSO}(A_\Gamma)$ fibres if and only if virtually fibres and this happens if and only if one of the following two conditions holds:
	\begin{enumerate}
		\item There exists $v\in V(\Gamma)$ such that $\Gamma\setminus\st(v)$ has at least $3$ connected components, or
		\item  $\mathrm{PSO}(A_\Gamma)$ is isomorphic to a RAAG $A_\Theta$ with $\Theta$ a connected graph.
	\end{enumerate}
\end{proposition}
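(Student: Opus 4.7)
The plan is to parallel the proof of Proposition \ref{FibringPSA}. By Theorem \ref{SigmaPSO}.2, membership of the support of a character in a $\delta$-p-set is symmetric under $\chi\mapsto-\chi$, so $\Sigma^1(\mathrm{PSO}(A_\Gamma))$ is a symmetric subset of the character sphere. Corollary \ref{EmptySigma1} therefore reduces the question of fibring to the non-emptiness of $\Sigma^1(\mathrm{PSO}(A_\Gamma))$, and the equivalence between fibring and virtual fibring will follow from the cycle \emph{fibres} $\Rightarrow$ \emph{virtually fibres} (trivial) $\Rightarrow$ (1) or (2) $\Rightarrow$ \emph{fibres}.

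For sufficiency under condition (1), I fix $v_0\in V(\Gamma)$ such that $\Gamma\setminus\st(v_0)$ has three distinct connected components $K_1,K_2,K_3$, and define a discrete character $\chi:\mathrm{PSO}(A_\Gamma)\to\mathbb{Z}$ by $\chi(\pi^{v_0}_{K_1})=\chi(\pi^{v_0}_{K_2})=1$, $\chi(\pi^{v_0}_{K_3})=-2$, and $\chi=0$ on every other partial conjugation. The relator $\prod_{C\in I_{v_0}}\pi^{v_0}_C=1$ from Theorem \ref{Presentation PSA} is respected because the nonzero values sum to zero, and the remaining commutator relators are automatic since $\chi$ is abelian-valued. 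The support of $\chi$ contains three partial conjugations at the same vertex $v_0$, which is incompatible with the defining parity constraint of a $\delta$-p-set (exactly zero or two conjugations per vertex). By Theorem \ref{SigmaPSO}.2 this forces $[\chi]\in\Sigma^1(\mathrm{PSO}(A_\Gamma))$, and Corollary \ref{EmptySigma1} gives fibring. Under condition (2), $\mathrm{PSO}(A_\Gamma)\cong A_\Theta$ with $\Theta$ connected, and fibring follows directly from Corollary \ref{FibringRAAGs}.

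For necessity I argue contrapositively. If neither (1) nor (2) holds, then the failure of (1) implies that $\Gamma\setminus\st(v)$ has at most two connected components for every $v\in V(\Gamma)$, so every support graph $\Delta_v$ has at most two vertices and is trivially a forest. Theorem \ref{PSO-RAAG} then yields $\mathrm{PSO}(A_\Gamma)\cong A_\Theta$ for some simplicial graph $\Theta$, and the failure of (2) forces $\Theta$ to be disconnected. Corollary \ref{FibringRAAGs} shows that $A_\Theta$ does not virtually fibre, closing the cycle.

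The delicate point is the choice of $\chi$ in the sufficiency argument under (1). A naive two-element support such as $\chi(\pi^{v_0}_{K_1})=1$, $\chi(\pi^{v_0}_{K_2})=-1$ could a priori be extended to a $\delta$-p-set by adjoining compatible conjugations at other vertices, and ruling out every such extension would demand a case analysis of the partition condition. Spreading three nonzero values across the partial conjugations of the single vertex $v_0$ sidesteps this entirely by immediately violating the parity constraint ``exactly zero or two partial conjugations per vertex'' that every $\delta$-p-set must satisfy.
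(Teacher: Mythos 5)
Your proposal is correct and follows essentially the same route as the paper: the same character $\chi(\pi^{v_0}_{K_1})=\chi(\pi^{v_0}_{K_2})=1$, $\chi(\pi^{v_0}_{K_3})=-2$ violating the ``exactly zero or two per vertex'' constraint of a $\delta$-p-set under condition (1), and the same reduction via Theorem \ref{PSO-RAAG} and Corollary \ref{FibringRAAGs} when every $\Gamma\setminus\st(v)$ has at most two components. Your closing remark correctly identifies why a two-element support would not suffice, which the paper leaves implicit.
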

\begin{proof}
	 Assume first that $\Gamma\setminus\st(v)$ has at most $2$ connected components. Then, for each $v\in V(\Gamma)$ each support graph $\Delta_v$ has at most two vertices and is therefore a forest. In this situation we may apply Theorem \ref{PSO-RAAG}, which implies that there exists a simplicial graph $\Theta$ such that $\mathrm{PSO}(A_\Gamma)$ is isomorphic to a RAAG $A_\Theta$. Hence, by Corollary \ref{FibringRAAGs}, $\mathrm{PSO}(A_\Gamma)$ fibres if and only if $\Theta$ is connected.

	 Therefore, we may assume there exists $v\in V(\Gamma)$ such that $\Gamma\setminus\st(v)$ has at least $3$ connected components $L,K$ and $R$. Define a character $\chi:\mathrm{PSO}(A_\Gamma)\to\mathbb{Z}$ by setting: $$\chi(\pi_L^v)=1,\chi(\pi_K^v)=1,\chi(\pi^v_R)=-2$$
	 and $\chi(\pi^u_C)=0$ for any other partial conjugation $\pi^u_C$. The map $\chi$ is well-defined map because all the relators in the standard presentation of  $\mathrm{PSO}(A_\Gamma)$ are commutators except for the relators  of the form  $\prod_{C\in I_w}\pi^w_C=1$ where $w\in V(\Gamma)$ and $I_w$ is the set of connected components of $\Gamma\setminus\st(w)$.
	 These relations are clearly preserved under $\chi$. The support of $\chi$ equals to $\lbrace \pi_L^v,\pi_K^v,\pi^u_C\rbrace$ which consists of $3$ partial conjugations based at the same vertex $v$, but corresponding to three different connected components of $\Gamma\setminus\st(v)$. Therefore, the support of $\chi$ is not the subset of a $\delta$-p-set and, by Theorem \ref{SigmaPSO}.2, $[\chi]\in\Sigma^1(\mathrm{PSO}(A_\Gamma))$. Consequently, $\mathrm{PSO}(A_\Gamma)$ fibres.
\end{proof}

As a corollary, we can determine whether $\Aut(A_\Gamma)$ and $\Out(A_\Gamma)$ virtually fibre for transvection-free RAAGs.
\begin{corollary}\label{FibringTFAut}
	Let $A_\Gamma$ be a transvection-free RAAG such that $\abs{V(\Gamma)}>1$. Then, $\Aut(A_\Gamma)$ virtually fibres.
\end{corollary}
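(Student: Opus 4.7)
The plan is to reduce the statement to Proposition \ref{FibringPSA} via the fact that transvection-freeness makes $\mathrm{PSA}(A_\Gamma)$ a finite index subgroup of $\Aut(A_\Gamma)$, and then rule out the two exceptional cases appearing in that proposition.

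First, since $A_\Gamma$ is transvection-free, it follows from the discussion in Section $2.5$ that $\mathrm{PSA}(A_\Gamma)$ has finite index in $\Aut(A_\Gamma)$. By Lemma \ref{FibringFiniteIndex}, $\Aut(A_\Gamma)$ virtually fibres if and only if $\mathrm{PSA}(A_\Gamma)$ virtually fibres. Hence, appealing to Proposition \ref{FibringPSA}, it suffices to show that $A_\Gamma$ is isomorphic to neither $\mathbb{Z}^n$ (for any $n\geq 1$) nor $\mathbb{Z}^n\ast\mathbb{Z}^m$ (for any $n,m\geq 1$).

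The main step is therefore to verify that these excluded RAAGs always admit a transvection once $\abs{V(\Gamma)}>1$. If $A_\Gamma\cong\mathbb{Z}^n$ with $n\geq 2$, then $\Gamma$ is a complete graph, so for any two distinct vertices $u,v\in V(\Gamma)$ we have $\lk(u)=V(\Gamma)\setminus\lbrace u\rbrace\subset V(\Gamma)=\st(v)$, giving the transvection $\tau_u^v$. If $A_\Gamma\cong\mathbb{Z}^n\ast\mathbb{Z}^m$, then $\Gamma$ is the disjoint union of two complete graphs $\Gamma_1$ (on $n$ vertices) and $\Gamma_2$ (on $m$ vertices). When $n\geq 2$ one repeats the previous argument inside $\Gamma_1$ to produce a transvection; similarly if $m\geq 2$. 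Finally, in the remaining case $n=m=1$, $\Gamma$ consists of two isolated vertices $u,v$ so that $\lk(u)=\emptyset\subset\st(v)$, again producing the transvection $\tau_u^v$. In every case we contradict the transvection-free hypothesis, so these exceptional cases are excluded.

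Combining these two observations, $\mathrm{PSA}(A_\Gamma)$ virtually (in fact genuinely) fibres by Proposition \ref{FibringPSA}, and therefore $\Aut(A_\Gamma)$ virtually fibres by Lemma \ref{FibringFiniteIndex}. The argument is essentially routine: the only place that requires any care is the case analysis of the excluded free-product case, where one must keep track of the two complete components separately to conclude that even the factor $\mathbb{Z}\ast\mathbb{Z}=F_2$ is not transvection-free.
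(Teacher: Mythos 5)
Your proof is correct and follows essentially the same route as the paper: reduce to $\mathrm{PSA}(A_\Gamma)$ via finite index and Proposition \ref{FibringPSA}, then observe that the excluded cases $\mathbb{Z}^n$ and $\mathbb{Z}^n\ast\mathbb{Z}^m$ all admit transvections when $\abs{V(\Gamma)}>1$. You merely spell out the case analysis for the exceptional RAAGs that the paper states in one line.
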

\begin{proof}
	If $A_\Gamma$ is transvection-free then $\mathrm{PSA}(A_\Gamma)$ is a finite-index subgroup of $\Aut(A_\Gamma)$. By Proposition \ref{FibringPSA}, $\mathrm{PSA}(A_\Gamma)$ fibres  if and only if $A_\Gamma\ncong \mathbb{Z}^n,\mathbb{Z}^n\ast\mathbb{Z}^m$ for any $n,m\geq 1$. However, all such groups have transvections, contradicting the assumption that $A_\Gamma$ is transvection-free. Therefore, $\mathrm{PSA}(A_\Gamma)$ fibres, and the claim follows.
\end{proof}
\begin{corollary}
	Let $A_\Gamma$ be a transvection-free RAAG. Then $\Out(A_\Gamma)$ virtually fibres if and only if one of the following holds:
	\begin{enumerate}
		\item There exists $v\in V(\Gamma)$ such that $\Gamma\setminus\st(v)$ has at least $3$ connected components, or
		\item $\mathrm{PSO}(A_\Gamma)$ is isomorphic to a RAAG $A_\Theta$ with $\Theta$ a connected graph.
	\end{enumerate}
\end{corollary}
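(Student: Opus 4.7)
The plan is to reduce the statement to Proposition \ref{FibringPSO} by exploiting the finite-index relationship between $\mathrm{PSO}(A_\Gamma)$ and $\Out(A_\Gamma)$ under the transvection-free hypothesis, exactly as the previous corollary reduced the $\Aut$ case to Proposition \ref{FibringPSA}.

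First I would invoke the discussion in Section \ref{OUTRAAG} (specifically the remark that $\mathrm{PSO}(A_\Gamma)$ has finite index in $\Out(A_\Gamma)$ precisely when $A_\Gamma$ is transvection-free) together with the definition of transvection-free to conclude that $\mathrm{PSO}(A_\Gamma)\leq\Out(A_\Gamma)$ is a finite-index subgroup. Then Lemma \ref{FibringFiniteIndex} applies and gives the equivalence that $\Out(A_\Gamma)$ virtually fibres if and only if $\mathrm{PSO}(A_\Gamma)$ virtually fibres. Finally, Proposition \ref{FibringPSO} identifies virtual fibring of $\mathrm{PSO}(A_\Gamma)$ with the disjunction of the two conditions stated in the corollary, yielding the claim.

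There is essentially no obstacle here: the two technical inputs (the finite-index property of $\mathrm{PSO}(A_\Gamma)$ in the transvection-free case, and the characterisation of virtual fibring of $\mathrm{PSO}(A_\Gamma)$) have already been established. The only minor point to double-check is the degenerate case $|V(\Gamma)|=1$, where $A_\Gamma\cong\mathbb{Z}$ is vacuously transvection-free; here $\Out(A_\Gamma)=\mathbb{Z}/2$ is finite and hence does not virtually fibre, while simultaneously neither condition (1) nor condition (2) can hold ($\mathrm{PSO}(A_\Gamma)$ is trivial and the associated graph is empty, hence not connected), so the corollary holds vacuously in this case as well.
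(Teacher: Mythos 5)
Your proof is correct, and its forward direction coincides with the paper's. For the converse, however, you take a shorter and more elementary route than the paper does: you simply combine the finite-index relation $\mathrm{PSO}(A_\Gamma)\leq\Out(A_\Gamma)$ with Lemma~\ref{FibringFiniteIndex} (virtual fibring passes between a group and any finite-index subgroup) and then quote the full biconditional of Proposition~\ref{FibringPSO}. The paper instead re-derives the converse from scratch: when both conditions fail it applies Theorem~\ref{PSO-RAAG} to write $\mathrm{PSO}(A_\Gamma)\cong A_\Theta$ with $\Theta$ disconnected, observes that $\Out(A_\Gamma)$ is then virtually RFRS, and invokes Kielak's theorem together with Theorem~\ref{BettiRAAG} to conclude that $\Out(A_\Gamma)$ does not virtually fibre because $\beta_1^{(2)}(A_\Theta)\neq 0$. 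Both arguments are valid; yours has the advantage of avoiding Kielak's theorem entirely and of reusing Lemma~\ref{FibringFiniteIndex}, which the paper proves but does not exploit here, while the paper's version makes the role of the first $\ell^2$-Betti number explicit. Your aside on the degenerate case $|V(\Gamma)|=1$ is harmless but not needed, since Lemma~\ref{FibringFiniteIndex} applied to the trivial subgroup $\mathrm{PSO}(A_\Gamma)$ already handles it.
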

\begin{proof}
	If either of the two conditions holds, then by Proposition \ref{FibringPSO}, $\mathrm{PSO}(A_\Gamma)$ is a finite-index subgroup of $\Out(A_\Gamma)$ that fibres.
	Otherwise, there exists a graph $\Theta$ such that $\mathrm{PSO}(A_\Gamma)$ is isomorphic to $A_\Theta$. In this case, $\Out(A_\Gamma)$ is virtually a RAAG and, in particular, it is virtually RFRS. By a result of Kielak (cf. \cite{Kielak}) a virtually RFRS group $G$ fibres virtually if and only if $\beta_1^{(2)}(G)=0$. Therefore, $\Out(A_\Gamma)$ virtually fibres if and only if  $\beta_1^{(2)}(A_\Theta)=0$ which, by Theorem \ref{BettiRAAG},  holds if and only if $\Theta$ is connected.
\end{proof}
\subsection{Transvection subgroup of $\Out(A_\Gamma)$}\label{SectionTransvection}
Given a simplicial graph $\Gamma$ with $\abs{\Gamma}=n$ there is a canonical map $\rho:\Out(A_\Gamma)\to\mathrm{GL}_n(\mathbb{Z})$ induced by the action on the abelianization of $A_\Gamma$. The kernel of this map is known as the \textbf{Torelli subgroup} of $\Out(A_\Gamma)$ and is denoted by $\mathrm{IA}_\Gamma$.

Day proved in \cite{Day 2} that $\mathrm{IA}_\Gamma$ is finitely generated by the set of partial conjugations and the set of \textbf{commutator transvections} $[\tau_w^v,\tau_w^u]$ for all $u,v,w\in \Gamma$ with $w\leq u,v$ and $u,v$ not adjacent in $\Gamma$. The same result was independently obtained by Wade in \cite{Wade}. The map $\rho$ induces a short exact sequence:
\begin{equation} \label{eq:1}
	1\to \mathrm{IA}_\Gamma\to \mathrm{SOut}^0(A_\Gamma)\xrightarrow{\rho} Q\to 1
\end{equation}
where $Q$ is a block lower triangular matrix subgroup of $\mathrm{SL}_n(\mathbb{Z})$. The structure of $Q$ encodes information about the transvections of $\Out(A_\Gamma)$. 

To describe $Q$ explicitly, label the vertices of $\Gamma$ as $v_1,\dots,v_n$, ordering them so that vertices of each equivalence class are consecutive and, if $v_i\leq v_j$ are vertices in different equivalence classes, we have $i\leq j$. Let $V_1,\dots,V_N$ denote the equivalence classes, indexed so that if $v\leq w$, then $v\in V_i$ and $w\in V_j$ with $i\leq j$.

We define a directed graph $\Lambda_\Gamma$, called \textbf{transvection graph}, with vertex set $\lbrace V_1,\dots,V_N\rbrace$ and such that there is a directed edge from $V_i$ to $V_j$ if $v\leq w$ for some $v\in V_i$ and $w\in V_j$. There is a loop from $V_i$ to itself if and only if $\abs{V_i}\geq 2$. In this setting, the image of a transvection $\tau_{v_i}^{v_j}$ under $\rho$ is the elementary matrix $\mathrm{E}_{i,j}$ with $1$'s in the diagonal and in the $(i,j)$-entry and zeroes elsewhere. Thus, the matrices in $Q$ are block lower triangular, with diagonal blocks in $\mathrm{SL}_{\abs{V_i}}(\mathbb{Z})$ and non-trivial off-diagonal blocks $(i,j)$ permitted precisely when there is an edge $V_i\to V_j$ in $\Lambda_\Gamma$. 

\begin{theorem}[Proposition 4.11, \cite{Wade}]\label{PresentationQ} The group $Q$ is finitely presented. It is generated by the set: 
	$$\lbrace E_{i,j}\mid \tau_{v_i}^{v_j}\textrm{ is a well-defined transvection}\rbrace$$
with the following defining relations:
\begin{itemize}
	\item $[E_{i,j},E_{k,l}]$ if $i\neq k$ and $j\neq l$.
	\item  $[E_{i,j},E_{j,k}]E_{i,k}^{-1}$ for $i,j,k$ distinct.
	\item $(E_{i,j}E_{j,i}^{-1}E_{i,j})^4$ if $i\neq j$ and $[v_i]=[v_j]$
	\item $E_{i,j}E_{j,i}^{-1}E_{i,j}E_{j,i}E_{i,j}^{-1}E_{j,i}$ if $\lbrace v_i,v_j\rbrace$ is an equivalence class of two vertices.
\end{itemize}
\end{theorem}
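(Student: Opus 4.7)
The plan is to identify $Q$ as a parabolic-type subgroup of $\mathrm{SL}_n(\mathbb{Z})$ and deduce its presentation from classical presentations of elementary matrix groups over $\mathbb{Z}$. First, I would show that the proposed generators suffice by a row-reduction argument compatible with the block structure of $\Lambda_\Gamma$: every matrix in $Q$ is block lower triangular, with the $(i,j)$-block allowed to be non-zero precisely when $\Lambda_\Gamma$ has an edge $V_i\to V_j$. Left and right multiplication by the allowed $E_{i,j}$ clears the off-diagonal blocks, reducing each element to a product inside the Levi factors $\mathrm{SL}_{\abs{V_i}}(\mathbb{Z})$, each of which is itself generated by the intra-class $E_{i,j}$ with $v_i,v_j \in V_i$ (for $\abs{V_i}\geq 2$).

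Next, I would verify by direct matrix computation that all four families of relators hold in $Q$. The commutator relations $[E_{i,j},E_{k,l}]=1$ (for disjoint indices) and the Steinberg relation $[E_{i,j},E_{j,k}]=E_{i,k}$ are classical identities among elementary matrices; one must also check that the composition of allowed transvections remains allowed, i.e.\ that transitivity of the domination relation guarantees that $E_{i,k}$ is itself in the generating set whenever $E_{i,j}$ and $E_{j,k}$ are. The third relator $(E_{i,j}E_{j,i}^{-1}E_{i,j})^4=1$ expresses that $E_{i,j}E_{j,i}^{-1}E_{i,j}$ is an order-four element of the $\{i,j\}$-copy of $\mathrm{SL}_2(\mathbb{Z})$, and the fourth relator encodes the Weyl-element identity $w_{ij}w_{ji}=1$. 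Each reduces to a direct $2\times 2$ matrix computation.

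The essential step is proving sufficiency of the relations. I would use the semidirect decomposition $Q = U \rtimes L$, where $L = \prod_i \mathrm{SL}_{\abs{V_i}}(\mathbb{Z})$ is the Levi factor (generated by the intra-class $E_{i,j}$) and $U$ is the unipotent radical, a subgroup of the lower unitriangular matrices in $\mathrm{SL}_n(\mathbb{Z})$ and hence nilpotent. The commutator and Steinberg relations among the inter-class generators suffice to present $U$, with a central filtration coming from the ordering on $\lbrace V_1,\dots,V_N\rbrace$; restricted to each intra-class block, the relations recover the Silvester--Milnor presentation of $\mathrm{SL}_{\abs{V_i}}(\mathbb{Z})$; and the cross Steinberg relations then encode the semidirect action of $L$ on $U$. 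A normal-form / rewriting argument using all four families of relators shows that any word in the generators mapping to the identity in $Q$ is a consequence of the listed relators. The main obstacle is handling the $\mathrm{SL}_2(\mathbb{Z})$ factors: when $\abs{V_i}=2$ the higher-rank Steinberg relations alone are insufficient, and one must carefully combine the order-four relator with the hexagonal relator to recover the full presentation $\mathrm{SL}_2(\mathbb{Z})\cong\mathbb{Z}/4\ast_{\mathbb{Z}/2}\mathbb{Z}/6$, and then check that these intra-class relations interact consistently with the unipotent part via the cross Steinberg relations.
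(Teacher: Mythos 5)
The paper offers no proof of this statement: it is quoted directly from Wade's thesis (Proposition 4.11), so there is no internal argument to compare against. Your reconstruction follows what is essentially the standard route, and in spirit the route of the actual source: realize $Q$ as the full block lower triangular group prescribed by $\Lambda_\Gamma$, split it as $U\rtimes L$ with $L=\prod_i\mathrm{SL}_{\abs{V_i}}(\mathbb{Z})$ the block-diagonal part and $U$ the unipotent pattern subgroup, and assemble the presentation from presentations of the pieces via the standard lemma on presentations of split extensions (Wade organizes this as an induction that peels off one domination class at a time, which is the same decomposition iterated). Your handling of generation, of the verification that the relators hold (including the point that transitivity of domination keeps $E_{i,k}$ among the generators whenever $E_{i,j}$ and $E_{j,k}$ are), and of the rank-two blocks (the fourth relator is the braid relation $aba=bab$ with $a=E_{i,j}$, $b=E_{j,i}^{-1}$, which together with $(aba)^4=1$ recovers the presentation of $\mathrm{SL}_2(\mathbb{Z})$ as a quotient of $B_3$) are all correct.

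Two caveats. First, the step you describe as ``a normal-form / rewriting argument \dots shows that any word in the generators mapping to the identity is a consequence of the listed relators'' is precisely the content of the theorem and is asserted rather than carried out. The clean way to discharge it is: (a) prove that the Steinberg relators alone present the pattern group $U$, via the ordered-product normal form along the central filtration you mention; (b) invoke the Steinberg--Milnor presentations of the diagonal blocks; and (c) check that the conjugate of each inter-class $E_{i,j}$ by each intra-class generator is rewritten as a word in inter-class generators using only the listed relators. Each of (a)--(c) is routine, but none is free, and (a) in particular needs to be written down rather than gestured at. Second, the first relator family as printed ($i\neq k$ and $j\neq l$) cannot be read literally, since it would force $E_{1,2}$ and $E_{2,3}$ to commute, contradicting the second family; your gloss ``disjoint indices'' (i.e.\ $j\neq k$ and $i\neq l$) is the correct reading, and since your sufficiency argument depends on it, you should make that correction explicit.
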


There is a condition on the graph $\Gamma$ that will be useful to analyse the fibring properties of the group $Q$.
\begin{definition}
	We say that $\Gamma$ has \textbf{property (A)} if for every pair of distinct vertices $u,v\in\Gamma$ with $u\leq v$ there exists a third vertex $w\neq u,v$ with $u\leq w\leq v$.
\end{definition}
This condition is known in the literature as property (B2) in \cite{Aramayona} and as property (A1) in \cite{Sale}. Property (A) can fail for two distinct reasons:
\begin{enumerate}
	\item[(P1)] There exists an equivalence class of vertices in $\Gamma$ with exactly two elements. If there are exactly $n$ such equivalence classes, we say that \textbf{property (P1.n)} holds.
	\item[(P2)] There exist two singleton equivalence classes $[u]=\lbrace u\rbrace$ and $[v]=\lbrace v\rbrace$  with $u\leq v$, such that there is no vertex $w\neq u,v$ satisfying $u\leq w\leq v$.
\end{enumerate}
Thus, property (A) fails if and only if either (P1) or (P2) holds. In particular, the failure of property (P2) determines how far the group $Q$ is from being perfect.
\begin{lemma}\label{ComutatorQ} Assume that property (P2) fails. Consider two vertices $v_i\leq v_j$ such that $[v_i]\neq\lbrace v_i,v_j\rbrace$. Then, $E_{i,j}\in Q'$.
\end{lemma}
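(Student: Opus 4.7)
The plan is to exhibit $E_{i,j}$ as an explicit commutator of two generators of $Q$, by invoking the Steinberg-type relation from Theorem \ref{PresentationQ}, namely
$$[E_{i,k},E_{k,j}] = E_{i,j} \quad\text{whenever } i,j,k \text{ are distinct and both } E_{i,k}, E_{k,j} \text{ are generators.}$$
Thus the whole proof reduces to producing a vertex $v_k \neq v_i, v_j$ with $v_i \leq v_k \leq v_j$. Given such a $v_k$, both transvections $\tau_{v_i}^{v_k}$ and $\tau_{v_k}^{v_j}$ are well defined, so $E_{i,k}$ and $E_{k,j}$ lie in the generating set of $Q$ described in Theorem \ref{PresentationQ}, and the Steinberg commutator relation immediately places $E_{i,j}$ in $Q'$.

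The production of $v_k$ splits into cases according to the equivalence classes $[v_i]$ and $[v_j]$. First, if $[v_i] = [v_j]$, then the hypothesis $[v_i] \neq \{v_i, v_j\}$ forces $|[v_i]| \geq 3$, so any third element $v_k \in [v_i] \setminus \{v_i, v_j\}$ satisfies $v_i \leq v_k \leq v_j$. Second, if $[v_i] \neq [v_j]$ and $|[v_i]| \geq 2$, one chooses $v_k \in [v_i] \setminus \{v_i\}$; this $v_k$ is automatically different from $v_j$ since the classes differ, and $v_i \sim v_k \leq v_j$. Symmetrically, if $[v_i] \neq [v_j]$ and $|[v_j]| \geq 2$, pick $v_k \in [v_j] \setminus \{v_j\}$.

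The remaining case is when $[v_i] = \{v_i\}$ and $[v_j] = \{v_j\}$ are both singletons with $v_i \leq v_j$, and this is precisely where the hypothesis on (P2) enters: since (P2) fails, by definition there exists $v_k \neq v_i, v_j$ with $v_i \leq v_k \leq v_j$. Since every possible configuration has been handled, we obtain the desired intermediate vertex in each situation.

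I do not expect any real obstacle here; the argument is a case check driven entirely by the definitions of properties (P1), (P2) and by the Steinberg commutator relation in Theorem \ref{PresentationQ}. The only subtlety is organizing the cases cleanly so that the two hypotheses ($[v_i] \neq \{v_i, v_j\}$ and the failure of (P2)) are used exactly where needed, and making sure that the chosen $v_k$ is genuinely distinct from both $v_i$ and $v_j$ so that the relation $[E_{i,k}, E_{k,j}] = E_{i,j}$ is actually applicable (it requires the three indices to be distinct).
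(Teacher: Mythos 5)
Your proposal is correct and follows essentially the same route as the paper: produce an intermediate vertex $v_k$ with $v_i\leq v_k\leq v_j$ and apply the relation $[E_{i,k},E_{k,j}]=E_{i,j}$ from Theorem \ref{PresentationQ}. In fact your explicit case analysis is slightly more careful than the paper's one-line justification, which attributes the existence of $v_k$ entirely to the failure of (P2) even though that property only concerns singleton classes; in the remaining cases the intermediate vertex comes, as you note, from the hypothesis $[v_i]\neq\{v_i,v_j\}$.
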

\begin{proof}
	Since property (P2) fails there exists $v_k\neq v_i,v_j$ such that $v_i\leq v_k\leq v_j$. Then, the matrices $E_{i,k}$ and $E_{k,j}$ are generators of $Q$. The commutator relation of the presentation of Theorem \ref{PresentationQ} gives $E_{i,j}=[E_{i,k},E_{k,j}]\in Q'$, as desired.
\end{proof}

\begin{proposition}\label{FibringQ}
	The group $Q$ fibres if and only if property (P2) holds and this is equivalent to $Q^\mathrm{ab}$ being infinite. As a consequence, if property (P2) holds, then $\mathrm{SOut}^0(A_\Gamma)$ fibres and $\Out(A_\Gamma)$ fibres virtually.
\end{proposition}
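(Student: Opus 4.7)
The plan is to establish the chain of equivalences ``$Q$ fibres $\iff Q^{\mathrm{ab}}$ is infinite $\iff$ property (P2) holds'', and then deduce the consequences from the exact sequence~\eqref{eq:1}. The direction ``$Q$ fibres $\Rightarrow Q^{\mathrm{ab}}$ is infinite'' is immediate, since a fibration factors through the abelianisation; the other two implications I would prove via the presentation of $Q$ supplied by Theorem~\ref{PresentationQ}.

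For ``$Q^{\mathrm{ab}}$ infinite $\Rightarrow$ (P2)'', I would argue the contrapositive. If (P2) fails, Lemma~\ref{ComutatorQ} kills every generator $E_{i,j}$ with $[v_i]\neq\{v_i,v_j\}$ in $Q^{\mathrm{ab}}$. The remaining generators come from equivalence classes of exactly two elements, and for them the abelianised forms of the last two relations in Theorem~\ref{PresentationQ} combine to force $12E_{i,j}=0$: namely, $E_{i,j}+E_{j,i}=0$ from the relation $E_{i,j}E_{j,i}^{-1}E_{i,j}E_{j,i}E_{i,j}^{-1}E_{j,i}$ and $8E_{i,j}-4E_{j,i}=0$ from $(E_{i,j}E_{j,i}^{-1}E_{i,j})^4$. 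Hence $Q^{\mathrm{ab}}$ is a finitely generated torsion group and so finite. Conversely, if (P2) holds with witnesses $u=v_i, v=v_j$, I would define a character $\chi\colon Q\to\mathbb{Z}$ by $\chi(E_{i,j})=1$ and $\chi(E_{k,l})=0$ on all other generators, and verify well-definedness against the relations of Theorem~\ref{PresentationQ}. The only delicate check is the commutator relation $[E_{k,l},E_{l,m}]E_{k,m}^{-1}$ with $(k,m)=(i,j)$: this would force $\chi(E_{i,j})=0$, but it requires an intermediate vertex $v_l$ with $v_i\leq v_l\leq v_j$, which is precisely what (P2) rules out. The remaining relations only involve pairs with $v_a\sim v_b$, so cannot touch $E_{i,j}$ since $[v_i],[v_j]$ are distinct singleton classes.

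To upgrade $\chi$ to an actual fibration, I would set $Q_0=\langle E_{k,l} \mid (k,l)\neq(i,j)\rangle$ and prove $\ker(\chi)=Q_0$, which is finitely generated by construction. The key step is verifying that $Q_0\triangleleft Q$: conjugation by $E_{i,j}$ on a generator $E_{k,l}$ is trivial unless $k=j$ or $l=i$, and in those cases direct computation via the commutator relations yields $E_{i,j}E_{j,l}E_{i,j}^{-1}=E_{i,l}E_{j,l}$ and $E_{i,j}E_{k,i}E_{i,j}^{-1}=E_{k,j}^{-1}E_{k,i}$; all factors lie in $Q_0$ since none coincide with $E_{i,j}$ (using that $[v_i],[v_j]$ are singletons, so in particular $E_{j,i}$ is not even a generator of $Q$). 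I expect this normality verification to be the main obstacle: it is elementary but requires careful bookkeeping of which $E_{a,b}$ are actually generators of $Q$. Once it is complete, $Q/Q_0$ is cyclic, generated by the image of $E_{i,j}$, and $\chi$ descends through it as an isomorphism onto $\mathbb{Z}$, giving $\ker(\chi)=Q_0$. For the final consequence, the exact sequence~\eqref{eq:1} combined with Day--Wade's finite generation of $\mathrm{IA}_\Gamma$ allows Lemma~\ref{FibringComposition} to lift the fibration of $Q$ to $\mathrm{SOut}^0(A_\Gamma)$, and Lemma~\ref{FibringFiniteIndex} then yields the virtual algebraic fibration of $\Out(A_\Gamma)$.
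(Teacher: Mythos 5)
Your proposal is correct and follows essentially the same route as the paper: Lemma~\ref{ComutatorQ} together with the abelianised power relations shows $Q^{\mathrm{ab}}$ is finite when (P2) fails, and when (P2) holds the character sending $E_{i,j}\mapsto 1$ and all other generators to $0$ has kernel $\langle E_{k,l}\mid (k,l)\neq(i,j)\rangle$, verified by the same conjugation bookkeeping (the paper phrases it as pushing $E_{i,j}$ past the other generators rather than as normality of $Q_0$, but it is the identical computation). The only cosmetic difference is which pair of relations you abelianise to extract $12E_{i,j}=0$ in the two-element-class case.
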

\begin{proof} If property (P2) holds then there exist $v_i\leq v_j$ such that there is no $v_r\neq v_i,v_j$ with $v_i\leq v_r\leq v_j$. Moreover, $[v_i]=\lbrace v_i\rbrace$ and $[v_j]=\lbrace v_j\rbrace$. Consider the set:
	 $$\mathcal{E}=\lbrace E_{k,l}\mid\tau_{v_k}^{v_l}\text{ is well-defined and }(k,l)\neq(i,j)\rbrace$$
and define the map $f:Q\to\mathbb{Z}$ by $f(E_{i,j})=1$ and $f(E_{k,l})=0$ if $E_{k,l}\in\mathcal{E}$. This is a well-defined group homomorphism because it preserves all relators of $Q$ from Theorem \ref{PresentationQ}. We claim that $\ker(f)=\langle\mathcal{E}\rangle$. As $\ker(f)$ is normally generated by $\mathcal{E}$, it is enough to prove that for every $E_{k,l}\in\mathcal{E}$ there exists $R\in\langle\mathcal{E}\rangle$ such that $E_{i,j}E_{k,l}=RE_{i,j}$. If $i\neq l$ and $j\neq k$ then $E_{i,j}$ and $E_{k,l}$ commute so the claim follows. If $i=l$ and $j\neq k$ then $E_{k,j}\in\mathcal{E}$ and  $E_{i,j}E_{k,i}=E_{k,i}E_{k,j}E_{i,j}$ and if $j=k$ and $i\neq l$ then $E_{i,l}\in\mathcal{E}$ and $E_{i,j}E_{j,l}=E_{j,l}(E_{i,l})^{-1}E_{i,j}$. We deduce that $\ker(f)$ is finitely generated.
	
Now, suppose that property (P2) fails. Consider a pair of distinct vertices $v_i\leq v_j$ then, we have two possibilities. If $[v_i]\neq\lbrace v_i,v_j\rbrace$ we have $E_{i,j}\in Q'$ by Lemma \ref{ComutatorQ}. Therefore, $E_{i,j}=1$ in $Q^\mathrm{ab}$. If $[v_i]=\lbrace v_i,v_j\rbrace$ then, from the presentation of $Q$, we have $(E_{i,j}E_{j,i}^{-1}E_{i,j})^4=1$ and $(E_{j,i}E_{i,j}^{-1}E_{j,i})^4=1$. Those relations imply that $E_{i,j}^{12}=E_{j,i}^{12}=1$ in $Q^{\mathrm{ab}}$, so both $E_{i,j}$ and $E_{j,i}$ have finite order in the abelianization.

For the final part, observe that if property (P2) holds, then $Q$ algebraically fibres. Since $\rho:\mathrm{SOut}^0(A_\Gamma)\to Q$ is an epimorphism with finitely generated kernel we can use Lemma \ref{FibringComposition}. It follows that $\mathrm{SOut}^0(A_\Gamma)$ is algebraically fibred, so $\Out(A_\Gamma)$ virtually fibres.
\end{proof}
Having established the algebraic fibring properties of $Q$, we now turn to its virtual fibring.
\begin{proposition}\label{VirtualFibringQ}
	Assume property (P2) fails. Then $Q$ is virtually fibred if and only if property (P1.n) holds for some $n\geq 2$. Consequently, if property (P1.n) holds for some $n\geq 2$, it follows that $\Out(A_\Gamma)$ is virtually fibred.
\end{proposition}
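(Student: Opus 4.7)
The plan is to study the semidirect product decomposition $Q = U \rtimes D$, where $D = \prod_{i} \mathrm{SL}_{|V_i|}(\mathbb{Z})$ is the block-diagonal quotient and $U \trianglelefteq Q$ is the finitely generated nilpotent unipotent radical, generated by the elementary matrices $E_{i,j}$ with $v_i \leq v_j$ lying in different equivalence classes. For the forward direction, (P1.$n$) with $n \geq 2$ provides two $\mathrm{SL}_2(\mathbb{Z})$-factors of $D$, whose product contains a finite-index copy of $F_2 \times F_2 \cong A_{C_4}$. Since $C_4$ is connected, Corollary \ref{FibringRAAGs} gives that $A_{C_4}$ fibres with finitely generated kernel, and multiplying by the remaining (finitely generated) $\mathrm{SL}_{|V_j|}(\mathbb{Z})$-factors extends this to a fibration of a finite-index subgroup of $D$. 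Hence $D$ virtually fibres, and since $Q \twoheadrightarrow D$ has finitely generated kernel $U$, Corollary \ref{VFibringComposition} yields that $Q$ virtually fibres. The statement about $\Out(A_\Gamma)$ then follows by applying Corollary \ref{VFibringComposition} a second time to the short exact sequence \eqref{eq:1}, whose kernel $\mathrm{IA}_\Gamma$ is finitely generated by Day, together with Lemma \ref{FibringFiniteIndex} for the finite-index inclusion $\mathrm{SOut}^0(A_\Gamma) \leq \Out(A_\Gamma)$.

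For the backward direction, suppose $D$ has at most one $\mathrm{SL}_2(\mathbb{Z})$-factor, all remaining factors being $\mathrm{SL}_1(\mathbb{Z})$ or $\mathrm{SL}_n(\mathbb{Z})$ with $n \geq 3$. Given any finite-index subgroup $Q' \leq Q$, after shrinking I may assume $Q' = U' \rtimes D'$ with $U' = Q' \cap U$ and $D' = Q' \cap D$. The semidirect-product abelianization formula gives $Q'^{\mathrm{ab}} \cong (U'^{\mathrm{ab}})_{D'} \oplus D'^{\mathrm{ab}}$, so any character $\chi : Q' \to \mathbb{R}$ is determined by a $D'$-invariant character $\chi_U : U'^{\mathrm{ab}} \to \mathbb{R}$ together with a character $\chi_D : D'^{\mathrm{ab}} \to \mathbb{R}$.

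The main obstacle is showing that $\chi_U = 0$. Using the commutator relations $[E_{k,k'}, E_{k',j}] = E_{k,j}$ from Theorem \ref{PresentationQ}, conjugation by the generators $E_{k,k'}$ of the $V_a$-diagonal block acts on the family $\{E_{l,j} : v_l \in V_a\}$ (for each fixed $V_j$ dominated by $V_a$) via a representation isomorphic to the standard $\mathrm{SL}_{|V_a|}(\mathbb{Z})$-action on $\mathbb{Z}^{|V_a|}$. For $|V_a| \geq 2$, this representation admits no nonzero invariant functional, and the same holds for any finite-index subgroup, as one sees directly by examining a deep enough principal congruence subgroup $\Gamma(N)$. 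Consequently, $D'$-invariant characters on $U'^{\mathrm{ab}} \otimes \mathbb{Q}$ are supported only on generators $E_{i,j}$ with both $[v_i]$ and $[v_j]$ singletons. But the failure of (P2) supplies a third vertex $v_k \neq v_i, v_j$ with $v_i \leq v_k \leq v_j$, and the relation $[E_{i,k}, E_{k,j}] = E_{i,j}$ forces $E_{i,j} \in [U,U]$; hence $\chi_U(E_{i,j}) = 0$ and $\chi_U = 0$.

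With $\chi_U = 0$, the character factors as $\chi = \chi_D \circ \pi$ with $\pi : Q' \to D'$, so $\ker \chi$ is finitely generated if and only if $\ker \chi_D$ is. If $D$ has no $\mathrm{SL}_2(\mathbb{Z})$-factor, all factors have Kazhdan's property (T), so $D'^{\mathrm{ab}}$ is finite and $\chi_D = 0$. If $D$ has exactly one $\mathrm{SL}_2(\mathbb{Z})$-factor, after further shrinking I may take $D'$ to contain a finite-index subgroup of the form $F_r \times \prod_j H_j$, with $F_r$ a free subgroup of finite index in that $\mathrm{SL}_2(\mathbb{Z})$-factor and each $H_j$ a finite-index subgroup of an $\mathrm{SL}_{n_j \geq 3}(\mathbb{Z})$-factor; property (T) annihilates the $H_j$-contributions, so $\chi_D$ factors through $F_r$, and since $\Sigma^1(F_r) = \emptyset$ for $r \geq 2$, the kernel of $\chi_D$ is never finitely generated. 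In either case no nontrivial character of $Q'$ provides a fibration, so $Q$ does not virtually fibre.
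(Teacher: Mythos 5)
Your proof is correct and follows essentially the same route as the paper: decompose $Q$ as the unipotent part $U$ (the paper's $P$) semidirect the block-diagonal part $D$ (the paper's $R$), show that every character of a finite-index subgroup vanishes on the unipotent part, and then run the same case analysis on the number of $\mathrm{SL}_2(\mathbb{Z})$-factors, with $F_2\times F_2$ and Bestvina--Brady supplying the fibration in the positive direction, and Corollary \ref{VFibringComposition} transporting it back to $Q$ and to $\Out(A_\Gamma)$. The one place you diverge is in killing $\chi_U$: the paper invokes Lemma \ref{ComutatorQ} to place $P$ inside $Q'$, whereas you argue via $D'$-coinvariants and the absence of invariant functionals for the standard representation; your version is in fact the more careful one when passing to finite-index subgroups, though the deduction of $\chi_U(E_{i,j})=0$ from $E_{i,j}\in[U,U]$ needs the small extra remark that a suitable power, say $E_{i,j}^{N^2}=[E_{i,k}^{N},E_{k,j}^{N}]$, already lies in $[U',U']$.
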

\begin{proof}
	Let $Q=P\rtimes R$ where $P$ is the subgroup of matrices with all the diagonal blocks equal to the identity and $R$ is the direct product of the diagonal blocks. Take $M\in P$, then we can write $M=E_1\cdots E_r$, where each $E_i$ is an elementary matrix corresponding to a transvection with vertices in two different equivalence classes. By Lemma \ref{ComutatorQ} each $E_i$ lies in $Q'$, so $M\in Q'$ and $P\leq Q'$. Let $L\leq Q$ be a finite index subgroup and assume that there exists a non-trivial homeomorphism $\varphi:L\to\mathbb{Z}$. Since $L\cap P\leq L\cap Q'\leq Q'$ we have that $L\cap P\leq\ker(\varphi)$, so $\varphi$ induces a map $\hat{\varphi}:L\cap R\to\mathbb{Z}$. Assume that $R=\mathrm{SL}_{r_1}(\mathbb{Z})\times\cdots\times \mathrm{SL}_{r_k}(\mathbb{Z})\times \mathrm{SL}_{2}(\mathbb{Z})^n$ with $r_1,\dots,r_k\geq 3$. Then, $\hat{\varphi}=\hat{\varphi}_{r_1}\cdots\hat{\varphi}_{r_k}\hat{\varphi}_n$ where $\hat{\varphi}_{r_i}$ is the restriction of $\hat{\varphi}$ to $L\cap \mathrm{SL}_{r_k}(\mathbb{Z})$ for each $r_i$ and $\hat{\varphi}_n$ is the restriction of $\hat{\varphi}$ to $L\cap  \mathrm{SL}_{2}(\mathbb{Z})^n$.
	Since $\mathrm{SL}_m(\mathbb{Z})$ is not virtually indicable when $m\geq 3$ we have that $\hat{\varphi}_{r_i}$ is trivial for every $r_1,\dots, r_k$.
	Hence, we may assume that $n\geq 1$, otherwise $\hat{\varphi}$ would be trivial and thus $\varphi$ would also be trivial, implying that $Q$ is not virtually fibred. In this case, $\hat{\varphi}$ induces a map $\overline{\varphi}:L\cap \mathrm{SL}_{2}(\mathbb{Z})^n\to\mathbb{Z}$. Observe that $L\cap \mathrm{SL}_{2}(\mathbb{Z})^n$ is of finite index in $ \mathrm{SL}_{2}(\mathbb{Z})^n$. Hence, since $ \mathrm{SL}_2(\mathbb{Z})$ does not virtually fibre, we may assume that $n\geq 2$; otherwise $\overline{\varphi}$ would be trivial and so $\varphi$ would also be trivial, implying again that $Q$ does not virtually fibre.
	
	Recall that $F_2$ is a subgroup of index 12 in $\mathrm{SL}_2(\mathbb{Z})$ (c.f. \cite{Newmann}). Let us define $R_1$ to be the finite index subgroup of $R$ obtained by replacing each factor $\mathrm{SL}_2(\mathbb{Z})$ with its subgroup $F_2$. 
	We define the homeomorphism $\varphi: R_1\to\mathbb{Z}$ by mapping all generators from the $F_2$ factors to $1$ and the remaining generators to zero. By Theorem \ref{Bestvina-Brady}, and since $n\geq 2$, it follows that the kernel of $\varphi$ is finitely generated. Thus, $R$ is virtually fibred. Now, consider the natural projection map $\pi: Q\to R$. The kernel of this map is $P$, which is finitely generated. Therefore, by corollary \ref{VFibringComposition}, it follows that $Q$ is virtually algebraically fibred.
	
	For the last part, since $Q$ is virtually fibred and $\rho:\mathrm{SOut}^0(A_\Gamma)\to Q$ is an epimorphism with finitely generated kernel it follows by Corollary \ref{VFibringComposition} that $\mathrm{SOut}^0(A_\Gamma)$ fibres virtually. Therefore, $\Out(A_\Gamma)$ is virtually fibred.
\end{proof}
In the case when the RAAG $A_\Gamma$ admits no non-inner partial conjugations, we can characterize completely when $\Out(A_\Gamma)$ is virtually fibred:
\begin{corollary}\label{non-inner fibre}Let $A_\Gamma$ be a RAAG that admits no non-inner partial conjugations. Then, $\Out(A_\Gamma)$ virtually fibres if and only if either property (P2) holds or property (P1.n) holds for some $n\geq 2$.
\end{corollary}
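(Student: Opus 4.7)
My plan is to establish the two directions of the equivalence separately.

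For the forward direction, if (P2) holds then Proposition \ref{FibringQ} already gives that $\mathrm{SOut}^0(A_\Gamma)$ algebraically fibres, and if (P1.n) holds for some $n\geq 2$ then Proposition \ref{VirtualFibringQ} already gives virtual fibring of $\Out(A_\Gamma)$; in the first case Lemma \ref{FibringFiniteIndex} transfers the conclusion from the finite-index subgroup $\mathrm{SOut}^0(A_\Gamma)$ to $\Out(A_\Gamma)$ itself.

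For the converse I argue by contrapositive: I assume $A_\Gamma$ admits no non-inner partial conjugations and that neither (P2) nor (P1.n) for any $n\geq 2$ holds, and aim to show that $\Out(A_\Gamma)$ does not virtually fibre. By Lemma \ref{FibringFiniteIndex} this is equivalent to proving the same for $G:=\mathrm{SOut}^0(A_\Gamma)$, and by Proposition \ref{VirtualFibringQ} the quotient $Q=G/\mathrm{IA}_\Gamma$ is already known not to virtually fibre in our setting. My task therefore reduces to the implication ``if $G$ virtually fibres then so does $Q$''. The hypothesis of no non-inner partial conjugations delivers the key structural fact, via Day's generating set for $\mathrm{IA}_\Gamma$ recalled in Section \ref{SectionTransvection}: the partial-conjugation generators are absent, leaving only the commutator transvections $[\tau_w^v,\tau_w^u]$, each of which is a literal commutator of generators of $G$. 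Hence $\mathrm{IA}_\Gamma\subseteq[G,G]$ and the induced map $G^{\mathrm{ab}}\to Q^{\mathrm{ab}}$ is an isomorphism. For $G$ itself this already yields that any fibration $\psi:G\to\mathbb{Z}$ descends to a fibration of $Q$, since $\mathrm{IA}_\Gamma\subseteq\ker(\psi)$ and the induced character $\bar\psi:Q\to\mathbb{Z}$ has finitely generated kernel as a quotient of $\ker(\psi)$.

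The main obstacle is upgrading this descent from $G$ to an arbitrary finite-index subgroup $H\leq G$. Given a fibration $\varphi:H\to\mathbb{Z}$, I first pass to the normal core of $H$ in $G$ to assume $H\trianglelefteq G$ (the restriction of $\varphi$ to the normal core is still a fibration, as its kernel is finite-index in the finitely generated $\ker(\varphi)$). Setting $N_H:=H\cap\mathrm{IA}_\Gamma$ (finitely generated as a finite-index subgroup of $\mathrm{IA}_\Gamma$) and $\bar H:=\rho(H)$ (finite-index in $Q$), the restriction $\varphi|_{N_H}$ is automatically $\bar H$-invariant by the standard calculation $\varphi(hnh^{-1})=\varphi(n)$; the subtlety is that $\varphi|_{N_H}$ need not vanish, because $N_H$ is not in general contained in $[H,H]$ even though $\mathrm{IA}_\Gamma\subseteq[G,G]$. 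My plan is to apply the five-term exact sequence in rational cohomology associated to $1\to N_H\to H\to\bar H\to 1$, which identifies the quotient $H^1(H;\mathbb{Q})/H^1(\bar H;\mathbb{Q})$ with a subspace of the $\bar H$-invariants $H^1(N_H;\mathbb{Q})^{\bar H}$; the vanishing $\varphi|_{N_H}=0$ then follows from the vanishing of these invariants. I expect this vanishing to be the technical heart of the argument, and plan to derive it from a direct analysis of the conjugation action of the transvection subgroup $Q$ on $\mathrm{IA}_\Gamma^{\mathrm{ab}}$, controlled by the combinatorics of the transvection graph $\Lambda_\Gamma$ under our hypotheses. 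Once the descent is established, $\varphi$ induces a fibration of $\bar H\leq Q$, yielding virtual fibring of $Q$ and contradicting Proposition \ref{VirtualFibringQ}.
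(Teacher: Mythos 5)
Your forward direction is fine and matches the paper. The converse, however, has a genuine gap, and it stems from missing the one structural fact the paper's proof actually hinges on: the absence of non-inner partial conjugations forces the absence of commutator transvections as well (this is Sale, Lemma 2.4, cited at exactly this point in the paper). Since Day's generating set for $\mathrm{IA}_\Gamma$ consists of partial conjugations and commutator transvections, under the hypothesis the Torelli subgroup is \emph{trivial}, so $\mathrm{SOut}^0(A_\Gamma)\cong Q$ is itself a finite-index subgroup of $\Out(A_\Gamma)$ and Lemma \ref{FibringFiniteIndex} finishes the argument in one line, with Propositions \ref{FibringQ} and \ref{VirtualFibringQ} supplying the characterization of when $Q$ virtually fibres. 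You only use half of this: you keep the (possibly nontrivial) subgroup generated by commutator transvections and try to push fibrations of $\mathrm{SOut}^0(A_\Gamma)$ down to $Q$.

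That descent is where the proof is incomplete. Your observation that $\mathrm{IA}_\Gamma\subseteq[G,G]$ handles characters defined on all of $G$, but virtual fibring concerns arbitrary finite-index subgroups $H$, and as you yourself note, $N_H=H\cap\mathrm{IA}_\Gamma$ need not lie in $[H,H]$. The step you defer --- the vanishing of $H^1(N_H;\mathbb{Q})^{\bar H}$, needed to conclude $\varphi|_{N_H}=0$ via the five-term sequence --- is announced as ``the technical heart'' and a ``plan,'' but never carried out; it is not automatic, since in general a quotient of a virtually fibred group by an infinite normal subgroup need not virtually fibre, so something specific to this situation must be proved. The irony is that the cleanest way to establish it is precisely the fact you skipped: $N_H$ is trivial because $\mathrm{IA}_\Gamma$ is. As written, the proposal does not constitute a complete proof of the converse.
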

\begin{proof}
	Since $A_\Gamma$ admits no non-inner partial conjugations there are also no commutator transvections  [c.f \cite{Sale} Lemma 2.4]. Therefore, the Torelli subgroup $\mathrm{IA}_\Gamma$ is trivial and  we have an isomorphism $\mathrm{SOut}^0(A_\Gamma)\cong Q$. In particular, $Q$ is a finite index subgroup of $\Out(A_\Gamma)$. By Lemma \ref{FibringFiniteIndex} it follows that $\Out(A_\Gamma)$ virtually fibres if and only if $Q$ is virtually fibred. The result then follows directly from Propositions \ref{FibringQ} and \ref{VirtualFibringQ}.
\end{proof}

\begin{remark}\label{Directproductfibring}
	A group $G$ is said to be \textbf{indicable} if it admits a non-trivial homomorphism $\varphi : G \to \mathbb{Z}$. The direct product of indicable groups is algebraically fibred (cf.\ \cite[Theorem 1]{Friedl}), and if $G$ and $H$ are directly indecomposable centerless groups, then $\Aut(G)\times\Aut(H)$ is a finite-index subgroup of $\Aut(G\times H)$ by Proposition~\ref{CenterAutomorphism}. Therefore, using the results on virtual indicability of automorphisms of RAAGs (cf.\ \cite{Aramayona,Sale}), one can construct examples of groups that virtually fibre. We will illustrate this method with explicit examples in Section~\ref{Algebraic fibring examples}.
\end{remark}
\section{$\ell^2$-Betti numbers}
\subsection{Transvection subgroup of $\Out(A_\Gamma)$}\label{Transvections}
Let $A_\Gamma$ be a RAAG. The $\ell^2$-Betti numbers of its transvection group $Q$ can be determined from the associated transvection graph $\Lambda_\Gamma$, defined at the beginning of Section \ref{SectionTransvection}. For the next result we will denote by $\mathrm{loop}(\Lambda_\Gamma)$ the set of all edges in $\Lambda_\Gamma$ that go from a vertex to itself.
\begin{lemma}\label{TranvectionsBetti}
	\begin{enumerate}
		\item If $E(\Lambda_\Gamma)\neq\mathrm{loop}(\Lambda_\Gamma)$ then $\beta_i^{(2)}(Q)=0$ for every $i\geq0$.
		\item If $E(\Lambda_\Gamma)=\mathrm{loop}(\Lambda_\Gamma)$ then $\beta_i^{(2)}(Q)=0$ for every $i\geq0$ unless every equivalence class of vertices has at most $2$ elements. In that case, if there are $n$ equivalence classes with $2$ elements, then:
		$$\beta_k^{(2)}(Q)= \begin{cases} 
			\frac{1}{12^n} & \text{if } $k=n$  \\
			0 & \text{if } k\neq n
		\end{cases}
		$$
	\end{enumerate}
\end{lemma}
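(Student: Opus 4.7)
The plan is to exploit the semidirect product decomposition $Q = P \rtimes R$ introduced in the proof of Proposition \ref{VirtualFibringQ}, where $P$ is the subgroup of matrices with identity diagonal blocks and $R = \prod_{i=1}^N \mathrm{SL}_{|V_i|}(\mathbb{Z})$ is the block-diagonal part. The presentation of $Q$ in Theorem \ref{PresentationQ} separates generators $E_{i,j}$ into those with $v_i, v_j$ in the same equivalence class (corresponding to loops in $\Lambda_\Gamma$, generating the diagonal blocks $\mathrm{SL}_{|V_i|}(\mathbb{Z})$) and those with $v_i, v_j$ in different classes (corresponding to non-loop edges in $\Lambda_\Gamma$, generating $P$). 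The key structural observation is that $P$ is a group of unipotent block-lower-triangular matrices, hence nilpotent, hence amenable.

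For part (1), suppose $E(\Lambda_\Gamma) \neq \mathrm{loop}(\Lambda_\Gamma)$. Then there exists at least one generator $E_{i,j}$ with $v_i, v_j$ in distinct equivalence classes, which has infinite order in $Q$. Thus $P$ is an infinite normal subgroup of $Q$. By Proposition \ref{VanishingBetti}.2, all $\ell^2$-Betti numbers of $P$ vanish, and by Proposition \ref{VanishingBetti}.1 applied inductively for every $n$, all $\ell^2$-Betti numbers of $Q$ vanish as well.

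For part (2), assume $E(\Lambda_\Gamma) = \mathrm{loop}(\Lambda_\Gamma)$. Then $P$ is trivial and $Q = R = \prod_{i=1}^N \mathrm{SL}_{|V_i|}(\mathbb{Z})$. The factors with $|V_i|=1$ are trivial and contribute a factor of $1$ via the Künneth formula (Theorem \ref{KunnethBetti}). For the remaining factors, Theorem \ref{BettiGL(Z)} together with Proposition \ref{FiniteIndexBetti} (using that $\mathrm{SL}_k(\mathbb{Z})$ has index $2$ in $\mathrm{GL}_k(\mathbb{Z})$) gives:
\begin{itemize}
\item $\beta_n^{(2)}(\mathrm{SL}_k(\mathbb{Z})) = 0$ for all $n \geq 0$ when $k \geq 3$;
\item $\beta_1^{(2)}(\mathrm{SL}_2(\mathbb{Z})) = \tfrac{1}{12}$ and $\beta_n^{(2)}(\mathrm{SL}_2(\mathbb{Z})) = 0$ for $n \neq 1$.
\end{itemize}
If some equivalence class has $|V_i| \geq 3$, then the corresponding factor has all $\ell^2$-Betti numbers zero, so by Künneth all $\ell^2$-Betti numbers of $Q$ vanish. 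Otherwise every $|V_i| \in \{1,2\}$, and if $n$ is the number of equivalence classes with $|V_i|=2$, the Künneth formula gives a single non-zero contribution: namely in degree $k=n$, where each of the $n$ factors $\mathrm{SL}_2(\mathbb{Z})$ contributes its degree-$1$ Betti number $\tfrac{1}{12}$, yielding $\beta_n^{(2)}(Q) = (1/12)^n = 1/12^n$, and $\beta_k^{(2)}(Q) = 0$ for $k \neq n$.

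The only subtle point is confirming the semidirect decomposition $Q = P \rtimes R$ and the normality and amenability of $P$; this is where the structural description of $Q$ as a block-lower-triangular matrix group is essential. Once this is in hand, the two cases reduce to a direct application of the general vanishing principles (Proposition \ref{VanishingBetti}) and the Künneth formula combined with the known values for $\mathrm{GL}_k(\mathbb{Z})$.
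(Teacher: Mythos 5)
Your proof is correct and follows essentially the same strategy as the paper: for part (1) both arguments exhibit an infinite amenable normal subgroup of $Q$ and conclude via Proposition \ref{VanishingBetti}, and for part (2) both reduce to the Künneth formula applied to $\mathrm{SL}_2(\mathbb{Z})^n$ after discarding the factors $\mathrm{SL}_k(\mathbb{Z})$ with $k\geq 3$. The only difference is that you use the full unipotent radical $P$ (nilpotent, hence amenable) as the normal subgroup, whereas the paper works with a normal subgroup $Q_k\cong\mathrm{SL}_{\abs{V_k}}(\mathbb{Z})\ltimes\mathrm{Mat}_{\abs{V_k}\times m}(\mathbb{Z})$ and its free abelian normal subgroup; both choices accomplish the same task.
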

\begin{proof}
		Assume first that $E(\Lambda_\Gamma)\neq\mathrm{loop}(\Lambda_\Gamma)$. Let $k$ be the smallest index such that there exists an arrow starting in $V_k$ that is not a loop. Define $Q_k$ to be the normal subgroup of $Q$ generated by the following set of elementary matrices:
		$$\lbrace\mathrm{E}_{i,j}\mid \text{ for each }i,j \text{ such that there is }v_i\in V_{l}, v_j\in V_k\text{ and }  V_k\to V_{l}\text{ in }\Lambda_\Gamma\rbrace$$
		The block structure of $Q$ implies that $Q_k$ corresponds to the subgraph $\Lambda_{\Gamma_k}$, obtained after deleting all arrows not starting in $V_k$. We have, $Q_k=\mathrm{SL}_{\abs{V_k}}(\mathbb{Z})\ltimes \mathrm{Mat}_{\abs{V_k}\times m}(\mathbb{Z})$, where $m=\sum\lbrace \abs{V_j}\mid j>k\text{ and there is an arrow }V_k\to V_j\rbrace$. The semidirect product action is by matrix multiplication. Since $\mathrm{Mat}_{\abs{V_k}\times m}(\mathbb{Z})$ is a non-trivial, free abelian normal subgroup of $Q_k$, all the $\ell^2$-Betti numbers of $Q_k$ vanish. Furthermore, as $Q_k$ is a normal subgroup of $Q$, this implies that all $\ell^2$-Betti numbers of $Q$ also vanish.
		
		Consider the case when $E(\Lambda_\Gamma)=\mathrm{loop}(\Lambda_\Gamma)$, so $Q=\mathrm{SL}_{\abs{V_1}}(\mathbb{Z})\times\cdots\times \mathrm{SL}_{\abs{V_N}}(\mathbb{Z})$. If any equivalence class $V_i$ satisfies $\abs{V_i}\geq3$ then, by Theorem \ref{BettiGL(Z)}, all $\ell^2$-Betti numbers of $\mathrm{SL}_{\abs{V_i}}(\mathbb{Z})$ vanish, and hence so do those of $Q$. Thus, we may assume $Q\cong\mathrm{SL}_2(\mathbb{Z})^n$. The Künneth formula for the $\ell^2$-Betti numbers of the direct product of groups, together with induction over $n$, yields the stated result.
\end{proof}
\subsection{First $\ell^2$-Betti number of $\Aut(A_\Gamma)$ and $\Out(A_\Gamma)$}
As a first step, we compute the zeroth $\ell^2$-Betti number, which is equivalent to determine whether the groups are finite or not.
\begin{lemma}\label{betti0}
	$\beta_0^{(2)}(\Aut(A_\Gamma))>0$ if and only if $A_\Gamma\cong\mathbb{Z}$. Similarly, $\beta_0^{(2)}(\Out(A_\Gamma))>0$ if and only if $A_\Gamma$ admits no non-trival partial conjugations and no transvections.
\end{lemma}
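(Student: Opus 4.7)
The plan is to reduce both equivalences, via Lemma \ref{LemmaZeroth}, to the question of when $\Aut(A_\Gamma)$ and $\Out(A_\Gamma)$ are finite, since $\beta_0^{(2)}(G)>0$ precisely when $G$ is finite. Once this reduction is made, each statement becomes a direct bookkeeping of facts already recorded in the preliminaries.

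For the assertion about $\Aut(A_\Gamma)$, the ``if'' direction is immediate: when $A_\Gamma\cong\mathbb{Z}$ one has $\Aut(A_\Gamma)\cong\mathrm{GL}_1(\mathbb{Z})=\mathbb{Z}/2\mathbb{Z}$. For the converse, I would split the analysis into two cases. If $A_\Gamma\cong\mathbb{Z}^n$ with $n\geq 2$, then $\Aut(A_\Gamma)=\mathrm{GL}_n(\mathbb{Z})$ is infinite. Otherwise $A_\Gamma$ is non-abelian, so $Z(A_\Gamma)\subsetneq A_\Gamma$ and $\mathrm{Inn}(A_\Gamma)\cong A_\Gamma/Z(A_\Gamma)$ is an infinite subgroup of $\Aut(A_\Gamma)$. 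In either case $\Aut(A_\Gamma)$ is infinite, which completes the dichotomy.

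For the assertion about $\Out(A_\Gamma)$, I would rely directly on the fact already invoked in the paper (see the discussion preceding Lemma \ref{FiniteOutLemma} and Proposition 3.6 of \cite{Wade}) that graph symmetries and inversions have finite order, while every transvection and every non-inner partial conjugation defines an element of infinite order in $\Out(A_\Gamma)$. Consequently $\Out(A_\Gamma)$ is finite exactly when no transvections appear among Laurence's generators and every partial conjugation becomes trivial in $\Out(A_\Gamma)$. The only point worth emphasising is the identification of ``non-trivial partial conjugation'' with ``non-inner partial conjugation'': the generator $\pi^v_C$ agrees with conjugation by $v$ precisely when $V(C)=V(\Gamma)\setminus\st(v)$, that is, when $\Gamma\setminus\st(v)$ is connected, so the two notions coincide. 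I do not expect any genuine obstacle here, as the lemma is essentially a restatement of previously established facts once the reduction via Lemma \ref{LemmaZeroth} has been made.
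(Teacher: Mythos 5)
Your proposal is correct and follows essentially the same route as the paper: reduce to finiteness of the groups via Lemma \ref{LemmaZeroth}, handle $\Aut(A_\Gamma)$ by splitting into the free abelian case ($\mathrm{GL}_n(\mathbb{Z})$ is infinite for $n\geq 2$) and the non-abelian case (where the paper exhibits an infinite-order inner automorphism coming from a non-central vertex, equivalent to your observation that $\mathrm{Inn}(A_\Gamma)\cong A_\Gamma/\mathrm{Z}(A_\Gamma)$ is infinite), and quote Proposition 3.6 of \cite{Wade} for the $\Out(A_\Gamma)$ statement. Your explicit remark identifying non-trivial partial conjugations in $\Out(A_\Gamma)$ with non-inner ones is a small clarification the paper leaves implicit, but it does not change the argument.
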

\begin{proof}
	Suppose first that $A_\Gamma$ is not $\mathbb{Z}$. If $A_\Gamma$ is not free abelian then there exists a vertex $v\in V(\Gamma)$ such that $v$ is not central. The inner automorphism given by conjugation by $v$ has infinite order in $\Aut(A_\Gamma)$. If $A_\Gamma$ is free abelian of rank at least two then, for any pair of distinct generators, one can define a transvection which also has infinite order. Since $\Aut(\mathbb{Z})\cong\mathbb{Z}/2\mathbb{Z}$ we conclude that $\beta_0(\Aut(A_\Gamma))>0$ if and only if $A_\Gamma\cong\mathbb{Z}$.  The second statement follows directly from \cite{Wade} Proposition 3.6.
\end{proof}
Now, we compute the first $\ell^2$-Betti number.
\begin{theorem} Let $A_\Gamma$ be a RAAG. Then, $\beta_1^{(2)}(\Aut(A_\Gamma))>0$ if and only if $A_\Gamma=\mathbb{Z}^2$. Similarly, $\beta_1^{(2)}(\Out(A_\Gamma))>0$ if and only if one of the following holds
		\begin{itemize}
			\item  $A_\Gamma$ admits no non-inner partial conjugations, and admits exactly two transvections lying in the same equivalence class; or
			\item $A_\Gamma$ admits no transvections, $\Gamma\setminus\st(v)$ has at most two connected components for every vertex $v\in V(\Gamma)$, and $\mathrm{PSO}(A_\Gamma)$ is isomorphic to  $A_\Theta$ for some disconnected graph $\Theta$.
		\end{itemize}
\end{theorem}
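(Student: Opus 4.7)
The plan for the $\Aut$-statement is to exploit the short exact sequence $1\to\mathrm{Inn}(A_\Gamma)\to\Aut(A_\Gamma)\to\Out(A_\Gamma)\to 1$. If $A_\Gamma\cong\mathbb{Z}^n$ then $\Aut(A_\Gamma)=\mathrm{GL}_n(\mathbb{Z})$, and Theorem \ref{BettiGL(Z)} directly gives $\beta_1^{(2)}>0$ exactly when $n=2$. Otherwise $A_\Gamma$ is non-abelian, so $\mathrm{Inn}(A_\Gamma)$ is isomorphic to the (non-trivial, hence infinite) RAAG on the induced subgraph of non-central vertices, and in particular it is finitely generated, with finite first $\ell^2$-Betti number. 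If $\Out(A_\Gamma)$ is infinite, Proposition \ref{VanishingBetti}.3 applied to the sequence yields $\beta_1^{(2)}(\Aut(A_\Gamma))=0$. If instead $\Out(A_\Gamma)$ is finite, Lemma \ref{FiniteOutLemma} forces $\Gamma$ connected with trivial centre, so that $\mathrm{Inn}(A_\Gamma)\cong A_\Gamma$ has finite index in $\Aut(A_\Gamma)$; Theorem \ref{BettiRAAG} gives $\beta_1^{(2)}(A_\Gamma)=\overline{\beta}_0(\widehat{\Gamma})=0$, and Proposition \ref{FiniteIndexBetti} transports this vanishing to $\Aut(A_\Gamma)$.

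For $\Out(A_\Gamma)$ I first replace the group by its finite-index subgroup $\mathrm{SOut}^0(A_\Gamma)$ via Proposition \ref{FiniteIndexBetti}, and then analyse the short exact sequence $1\to\mathrm{IA}_\Gamma\to\mathrm{SOut}^0(A_\Gamma)\to Q\to 1$. The Torelli subgroup $\mathrm{IA}_\Gamma$ is finitely generated by partial conjugations and commutator transvections (Day--Wade), hence has finite first $\ell^2$-Betti number. When $A_\Gamma$ is transvection-free one has $\mathrm{SOut}^0=\mathrm{IA}_\Gamma=\mathrm{PSO}(A_\Gamma)$: if some $\Gamma\setminus\st(v)$ has at least three components then Proposition \ref{FibringPSO} gives fibration of $\mathrm{PSO}(A_\Gamma)$ and $\beta_1^{(2)}=0$; if not, Theorem \ref{PSO-RAAG} gives $\mathrm{PSO}(A_\Gamma)\cong A_\Theta$ and Theorem \ref{BettiRAAG} yields $\beta_1^{(2)}(A_\Theta)=\overline{\beta}_0(\widehat{\Theta})$, positive precisely when $\Theta$ is disconnected, which is condition (2).

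When transvections are present, the argument stratifies by the failure type of property (A). If property (P2) holds, Proposition \ref{FibringQ} shows $Q$ fibres; since $\mathrm{IA}_\Gamma$ is finitely generated, Lemma \ref{FibringComposition} lifts the fibration to $\mathrm{SOut}^0$, giving $\beta_1^{(2)}=0$. If (P1.$n$) holds for some $n\geq 2$, the same reasoning with Proposition \ref{VirtualFibringQ} and Corollary \ref{VFibringComposition} yields virtual fibration and vanishing. In the remaining configurations either every equivalence class is a singleton---so cross-class transvections exist, $E(\Lambda_\Gamma)\neq\mathrm{loop}(\Lambda_\Gamma)$, and Lemma \ref{TranvectionsBetti}.1 gives $\beta_i^{(2)}(Q)=0$; together with the finite generation of $\mathrm{IA}_\Gamma$ and Proposition \ref{VanishingBetti}.3 this forces $\beta_1^{(2)}(\mathrm{SOut}^0)=0$---or exactly one equivalence class has size two. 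In the latter situation, if any cross-class transvection exists the same Lemma \ref{TranvectionsBetti} argument closes the case; otherwise $Q\cong\mathrm{SL}_2(\mathbb{Z})$ and the absence of a third dominator rules out commutator transvections, so $\mathrm{IA}_\Gamma$ reduces to $\mathrm{PSO}(A_\Gamma)$. If a non-inner partial conjugation exists, $\mathrm{IA}_\Gamma$ is infinite with finite first $\ell^2$-Betti number, and Proposition \ref{VanishingBetti}.3 finishes the job; if none does, $\mathrm{SOut}^0=Q\cong\mathrm{SL}_2(\mathbb{Z})$ has $\beta_1^{(2)}=\tfrac{1}{12}$, which matches condition (1).

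The main obstacle I foresee is the careful enumeration of all sub-cases in the transvection branch and the verification that the interplay between $\mathrm{IA}_\Gamma$, $Q$ and $\mathrm{SOut}^0$ always lands in a situation where Proposition \ref{VanishingBetti}.3 can be invoked, except in the one exceptional configuration that produces condition (1). The key ingredient making this work is the finite generation of $\mathrm{IA}_\Gamma$, which bounds its first $\ell^2$-Betti number; combined with the fact that (virtual) fibration of $Q$ always lifts to $\mathrm{SOut}^0$ through the finitely generated kernel, this systematically forces vanishing whenever neither the transvection-free condition (2) nor the $\mathrm{SL}_2$-type condition (1) is attained.
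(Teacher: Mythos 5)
Your proof of the $\Aut(A_\Gamma)$ statement is essentially identical to the paper's: the same short exact sequence $1\to A_\Gamma/\mathrm{Z}(A_\Gamma)\to\Aut(A_\Gamma)\to\Out(A_\Gamma)\to 1$, the same dichotomy on $\abs{\Out(A_\Gamma)}$, and the same invocations of Proposition~\ref{VanishingBetti}.3, Lemma~\ref{FiniteOutLemma} and Theorem~\ref{BettiRAAG}. For the $\Out(A_\Gamma)$ statement your route is genuinely different in organization. The paper splits into four cases according to the presence or absence of transvections and of non-inner partial conjugations: when both are present, Proposition~\ref{VanishingBetti}.3 applied to $1\to\mathrm{IA}_\Gamma\to\mathrm{SOut}^0(A_\Gamma)\to Q\to 1$ kills $\beta_1^{(2)}$ in one line (no information about $Q$ is needed, only that $\mathrm{IA}_\Gamma$ is infinite and finitely generated); when only transvections are present, $\mathrm{IA}_\Gamma$ is trivial and Lemma~\ref{TranvectionsBetti} alone decides the answer; when only partial conjugations are present, Proposition~\ref{FibringPSO} and Theorem~\ref{BettiRAAG} decide it. You instead stratify the transvection branch by the failure modes of property (A) and detour through the fibring results (Propositions~\ref{FibringQ} and~\ref{VirtualFibringQ} lifted along the Torelli sequence). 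This is correct but strictly longer: the cases where (P2) or (P1.$n$), $n\geq 2$, hold are already absorbed by the paper's two-line arguments, and the fibring machinery buys nothing extra for a first-$\ell^2$-Betti-number computation.

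There is one concrete gap in your residual case. The dichotomy ``either every equivalence class is a singleton or exactly one equivalence class has size two'' is not exhaustive among configurations where (P2) fails and (P1.$n$) fails for $n\geq 2$: equivalence classes with at least three elements are omitted (e.g.\ $A_\Gamma=\mathbb{Z}^3$, where $Q=\mathrm{SL}_3(\mathbb{Z})$), and in your final subcase the claim $Q\cong\mathrm{SL}_2(\mathbb{Z})$ fails if a class of size $\geq 3$ coexists with the unique class of size two, in which case $Q\cong\mathrm{SL}_2(\mathbb{Z})\times\mathrm{SL}_{\geq 3}(\mathbb{Z})\times\cdots$. Both omissions land in the vanishing bucket via Lemma~\ref{TranvectionsBetti}.2, so the patch is immediate, but as written the enumeration does not close. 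A smaller point: Proposition~\ref{VanishingBetti}.3 requires all three groups in the sequence to be infinite, so wherever you invoke it ``together with the finite generation of $\mathrm{IA}_\Gamma$'' you must separately note that if $\mathrm{IA}_\Gamma$ is trivial then $\mathrm{SOut}^0(A_\Gamma)\cong Q$ and the conclusion follows directly from the vanishing of $\beta_1^{(2)}(Q)$.
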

\begin{proof}
	We begin with the $\Aut(A_\Gamma)$ case. By Lemma \ref{betti0} $\Aut(A_\Gamma)$ is finite if and only if $A_\Gamma\cong\mathbb{Z}$, so we may assume  $\abs{V(\Gamma)}\geq 2$. Consider the short exact sequence:
		$$1\rightarrow A_\Gamma/\mathrm{Z}(A_\Gamma)\rightarrow \Aut(A_\Gamma)\rightarrow \Out(A_\Gamma)\rightarrow 1$$
		
		If $A_\Gamma$ is abelian, i.e. $A_\Gamma=\mathbb{Z}^{\abs{V(\Gamma)}}$, then $\Aut(A_\Gamma)=\mathrm{GL}_{\abs{V(\Gamma)}}(\mathbb{Z})$ and the  $\ell^2$-Betti numbers are known (see Theorem~\ref{BettiGL(Z)}). Now, suppose $\mathrm{Z}(A_\Gamma)\neq A_\Gamma$. Then,  $A_\Gamma/\mathrm{Z}(A_\Gamma)$ is infinite and finitely generated, so $\beta_1^{(2)}(A_\Gamma/\mathrm{Z}(A_\Gamma))<\infty$. We consider two cases:
		\begin{itemize}
			\item  If $\abs{\Out(A_\Gamma)}=\infty$ then Proposition \ref{VanishingBetti}.3 implies that $\beta_1^{(2)}(\Aut(A_\Gamma))=0$.
			\item  If $\abs{\Out(A_\Gamma)}<\infty$ then, by Lemma \ref{FiniteOutLemma}, $\Gamma$ is a connected graph and $A_\Gamma$ is a finite index subgroup of $\Aut(A_\Gamma)$. Hence, $\beta_1^{(2)}(A_\Gamma)=0$ if and only if $\beta_1^{(2)}(\Aut(A_\Gamma))=0$. By Theorem \ref{BettiRAAG} $\beta_1^{(2)}(A_\Gamma)=0$ if and only if $\overline{\beta}_0(\widehat{\Gamma})=0$, which is equivalent to $\Gamma$ being connected.
		\end{itemize}
	 So the only case with $\beta_1^{(2)}(\Aut(A_\Gamma)) > 0$ is when $A_\Gamma \cong \mathbb{Z}^2$.
	 
	For the $\Out(A_\Gamma)$ part we analyse different cases:
	\begin{itemize}
		\item If there are no transvections and no partial conjugations, then $\Out(A_\Gamma)$ is finite by Lemma \ref{betti0}, hence $\beta_1^{(2)}(\Out(A_\Gamma))=0$.
		\item  If there are both transvections and partial conjugations consider the short exact sequence:
		$$1\to \mathrm{IA}_\Gamma\to \mathrm{SOut}^0(A_\Gamma)\xrightarrow{\rho} Q\to 1$$
		
		since both $Q$ and $\mathrm{IA}_\Gamma$ are infinite finitely generated non-trivial groups and $\beta_1^{(2)}(\mathrm{IA}_\Gamma)<\infty$ we can apply Proposition \ref{VanishingBetti}.3 Then, it follows that $\beta_1^{(2)}(\Out(A_\Gamma))=0$. 
		\item  If there are transvections but no partial conjugations, there are also no commutator transvections  [c.f \cite{Sale} Lemma 2.4].	Hence, $\mathrm{IA}_\Gamma=1$ and $Q\cong\mathrm{SOut}^0(A_\Gamma)$ is a finite index subgroup of $\Out(A_\Gamma)$. By Lemma \ref{TranvectionsBetti}, $\beta_1^{(2)}(Q)=0$ unless $Q\cong \mathrm{SL}_2(\mathbb{Z})$, which holds precisely when there are exactly two transvections lying in the same equivalence class.
		\item If there are partial conjugations but no transvections, then $\mathrm{PSO}(A_\Gamma)$ is a finite index subgroup of $\Out(A_\Gamma)$. By Proposition \ref{FibringPSO}, we have that $\mathrm{PSO}(A_\Gamma)$ virtually fibres if and only if $\mathrm{PSO}(A_\Gamma)$ is not isomorphic to a RAAG $A_\Theta$ with $\Theta$ disconnected. Since virtual fibring implies vanishing of the first $\ell^2$-Betti number, it follows that $\beta_1^{(2)}(\Out(A_\Gamma))>0$ if and only if  $\mathrm{PSO}(A_\Gamma)$ is isomorphic to a RAAG $A_\Theta$ with $\Theta$ a  disconnected graph (c.f. Corollary \ref{BettiRAAG}).\qedhere
	\end{itemize}
\end{proof}
\subsection{Higher dimensional $\ell^2$-Betti numbers}
In general, computing the $\ell^2$-Betti numbers of $\Aut(A_\Gamma)$ and $\Out(A_\Gamma)$ is a difficult problem. However, for several families of RAAGs, results in the literature imply that all $\ell^2$-Betti numbers of $\Out(A_\Gamma)$ vanish.
\begin{theorem}\label{VanishingHigerBetti}
	Let $A_\Gamma$ be a RAAG. Then, all $\ell^2$-Betti numbers of $\Out(A_\Gamma)$ vanish if any of the following conditions holds:
	\begin{enumerate}
		\item $A_\Gamma\cong\mathbb{Z}^n$ with $n\geq 3$.
		\item $A_\Gamma$ is non-abelian with non-trivial centre.
		\item $A_\Gamma$ admits no non-inner partial conjugations and either $E(\Lambda_\Gamma)\neq\mathrm{loop}(\Lambda_\Gamma)$ or there exists an equivalence class containing at least three vertices.
		\item  $A_\Gamma$ admits non-inner partial conjugations and the graph $\Gamma$ has no SILs.
		\item $\Gamma$ is connected and the link of every non-maximal clique is either discrete or connected. In particular, this includes the case when $\Gamma$ is triangle-free.
		\item $\Gamma$ is connected and contains leaf-like vertices or non-trivial $\hat{v}$-component conjugations for some $v\in V(\Gamma)$. This includes the case where $\Gamma$ contains a leaf.
	\end{enumerate}
\end{theorem}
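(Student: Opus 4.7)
The plan is to treat the six conditions individually, since they are independent sufficient hypotheses. In each case the strategy falls into one of two patterns: either (a) identify a finite-index subgroup of $\Out(A_\Gamma)$ whose $\ell^2$-Betti numbers are already known to vanish by Theorem~\ref{BettiGL(Z)} or Lemma~\ref{TranvectionsBetti} and conclude via Proposition~\ref{FiniteIndexBetti}, or (b) exhibit an infinite amenable normal subgroup of $\Out(A_\Gamma)$ and apply Proposition~\ref{VanishingBetti}.1 together with Proposition~\ref{VanishingBetti}.2.

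Cases~(1) and~(3) fit scheme~(a) and should be essentially mechanical. Case~(1) reduces to $\Out(\mathbb{Z}^n) = \mathrm{GL}_n(\mathbb{Z})$ and Theorem~\ref{BettiGL(Z)}. For~(3), the absence of non-inner partial conjugations gives $\mathrm{IA}_\Gamma = 1$ (by Sale's lemma as already invoked in Corollary~\ref{non-inner fibre}), hence $\mathrm{SOut}^0(A_\Gamma) \cong Q$ is of finite index in $\Out(A_\Gamma)$. The first subcase $E(\Lambda_\Gamma) \neq \mathrm{loop}(\Lambda_\Gamma)$ is Lemma~\ref{TranvectionsBetti}.1 applied to $Q$; the second subcase (an equivalence class of size at least three) forces $Q$ to contain an $\mathrm{SL}_k(\mathbb{Z})$-factor with $k \geq 3$ in the direct product decomposition of Lemma~\ref{TranvectionsBetti}.2, so Theorem~\ref{KunnethBetti} combined with Theorem~\ref{BettiGL(Z)} yields the vanishing.

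Cases~(2) and~(4) should proceed under scheme~(b). For~(2), I would decompose $\Gamma = \Gamma_0 \ast \Delta$ as a join with $\Gamma_0$ the complete graph spanned by the central vertices. The transvections $\tau_u^z$ with $u \in V(\Delta)$ and $z \in V(\Gamma_0)$ pairwise commute and descend to an infinite free abelian subgroup of $\Out^0(A_\Gamma)$ (the non-abelian hypothesis guarantees $V(\Delta) \neq \emptyset$); its normality under the remaining Laurence generators should follow from an explicit, case-by-case computation of conjugation formulas. For~(4), the no-SIL hypothesis should force every support graph $\Delta_v$ to be a forest, so Theorem~\ref{PSO-RAAG} applies and $\mathrm{PSO}(A_\Gamma) \cong A_\Theta$; inspection of the edge rules in that theorem shows that the exceptional non-edge configuration requires a SIL, whence $\Theta$ is complete and $\mathrm{PSO}(A_\Gamma)$ is free abelian, and infinite by the non-inner hypothesis.

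Cases~(5) and~(6) should rely on structural results for $\Out(A_\Gamma)$ when $\Gamma$ is connected. For~(5), the combinatorial condition on links of non-maximal cliques allows iterated application of the Day--Wade short exact sequence (Theorem~\ref{s.e.s}) to peel off blocks consisting of $\mathrm{GL}_n(\mathbb{Z})$-type factors with $n \geq 3$ and $\Out(F_n)$-type factors with already-known vanishing of higher $\ell^2$-Betti numbers, the triangle-free subcase specializing to the classical Charney--Vogtmann block decomposition. For~(6), a leaf-like vertex or a non-trivial $\hat{v}$-component conjugation should supply a partial conjugation generating an infinite cyclic normal subgroup of a finite-index subgroup of $\Out(A_\Gamma)$, again fitting scheme~(b). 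The main obstacle I foresee is case~(4): verifying that the free abelian group $\mathrm{PSO}(A_\Gamma)$ is normal in (a finite-index subgroup of) $\Out(A_\Gamma)$ is delicate, because transvections do not preserve the set of partial conjugations in general, and the no-SIL hypothesis has to be deployed carefully to ensure the closure under conjugation lands back inside $\mathrm{PSO}(A_\Gamma)$.
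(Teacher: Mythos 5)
The paper's ``proof'' of this theorem is a one-line list of citations: item (1) is Theorem~\ref{BettiGL(Z)}, item (3) is Lemma~\ref{TranvectionsBetti}, and items (2), (4), (5), (6) are delegated to Charney--Vogtmann Proposition~4.4, Guirardel--Sale Lemma~3.4, and Charney--Vogtmann Theorems~6.6 and~3.4 respectively. Your treatments of (1) and (3) coincide with the paper's and are fine. Your argument for (2) is a correct, self-contained substitute for the citation, and the ``case-by-case computation'' you defer is actually clean: for any $\alpha\in\Aut(A_\Gamma)$ one has $\alpha\tau_u^z\alpha^{-1}\colon v\mapsto v\cdot\alpha(z)^{k_v}$ where $k_v$ is the exponent sum of $u$ in $\alpha^{-1}(v)$, and since $\alpha(z)$ is again central this is a product of transvections onto central vertices; the resulting free abelian subgroup meets $\mathrm{Inn}(A_\Gamma)$ trivially because $\mathrm{Z}(A_\Gamma)\cap[A_\Gamma,A_\Gamma]=1$ in a RAAG, so Proposition~\ref{VanishingBetti} finishes. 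Item (6) is in the right spirit but, as with (4), the normality assertion is the entire content and is exactly what the cited Charney--Vogtmann result supplies.

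The genuine gaps are in (4) and (5). For (4), the obstacle you flag is real and nothing in your sketch closes it: knowing that $\mathrm{PSO}(A_\Gamma)$ is infinite free abelian is useless unless it is normal or of finite index in $\Out(A_\Gamma)$, and in general it is neither. Already for $F_3$ (three isolated vertices) one computes $\tau_b^c\,\pi^a_{\{b\}}\,(\tau_b^c)^{-1}(b)=abca^{-1}c^{-1}$, which is cyclically reduced of length $5$ and hence not conjugate to $b$, whereas every element of $\mathrm{PSA}$ (even composed with inners) sends $b$ to a conjugate of $b$; so the partial-conjugation subgroup fails to be normal, and the no-SIL hypothesis must actively enter the conjugation computation for transvections. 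That verification is precisely Guirardel--Sale Lemma~3.4, which you neither prove nor cite, so your (4) is incomplete exactly where the paper leans on the literature. For (5), your plan is defective on two counts: $\Out(F_n)$ does \emph{not} have vanishing higher $\ell^2$-Betti numbers --- $\beta_{2n-3}^{(2)}(\Out(F_n))>0$ by Gaboriau--N\^ous, as recalled in the introduction of this very paper --- and, independently, Proposition~\ref{VanishingBetti}.1 derives vanishing from a \emph{normal subgroup} with vanishing $\ell^2$-Betti numbers, not from quotients appearing at the top of a filtration, so ``peeling off'' $\mathrm{GL}$- and $\Out(F_n)$-type blocks via Theorem~\ref{s.e.s} cannot conclude unless the innermost kernel is identified as infinite with vanishing $\ell^2$-Betti numbers, which your sketch never does.
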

\begin{proof}
	 All items follow from Theorem \ref{BettiGL(Z)}, [ \cite{Charney} Proposition 4.4], Proposition \ref{TranvectionsBetti}, [\cite{Guirardel} Lemma 3.4.],[\cite{Charney 2} Theorem 6.6] and [ \cite{Charney 2} Theorem 3.4] respectively.
\end{proof}

\section{Examples}
\subsection{First $\ell^2$-Betti number}
From Theorem \ref{Theorem 1.1}, we see that there are two distinct reasons that lead to the non-vanishing of the first $\ell^2$-Betti number of $\Out(A_\Gamma)$. Let us present two examples of RAAGs $A_\Gamma$ such that $\beta_1^{(2)}(\Out(A_\Gamma))>0$, each illustrating one of those behaviours. These examples go beyond the obvious cases where $A_\Gamma\in\lbrace \mathbb{Z}^2,F_2\rbrace$.

\begin{example} \label{VanishingExample}
		Consider the following graph $\Gamma$:
		$$\begin{tikzpicture}[main/.style = {draw, circle},node distance={15mm}] 
			\node[label={$v_3$}][main] (1)  {}; 
			\node[label={$v_1$}][main] (6) [above right of=1] {};
			\node[label=right:{$v_4$}][main] (2) [right of=1] {}; 
			\node[label={$v_2$}][main] (3) [above right of=2] {};
			\node[label=below:{$v_5$}][main] (7) [below right of=1] {};
			\node[label=below:{$v_6$}][main] (8) [below right of=2] {};
			\draw[-] (1) -- (2);
			\draw[-] (1) -- (6);
			\draw[-] (6) --  (2);
			\draw[-] (6) -- (3);
			\draw[-] (8) -- (3);
			\draw[-] (8) -- (7);
			\draw[-] (7) -- (2);	
			\draw[-] (7) -- (1);	
		\end{tikzpicture} $$
	It is straightforward to verify that there are no non-inner partial conjugations and that the only transvections are $\tau_{v_3}^{v_4}$ and $\tau_{v_4}^{v_3}$. Thus, we have $\mathrm{SOut}^0(A_\Gamma)\cong Q\cong\mathrm{SL}_2(\mathbb{Z})$. Since $\abs{\Aut(\Gamma)}=4$ and the subgroup of inversions has order $2^6$, the index of $\mathrm{SOut}^0(A_\Gamma)$ in $\Out(A_\Gamma)$ is $2^6\cdot 4=2^8$.
	Therefore, $\beta_n^{(2)}(\Out(A_\Gamma))=0$ if $n\neq 1$ and:
		$$\beta_1^{(2)}(\Out(A_\Gamma))=\frac{\beta_1^{(2)}(\mathrm{SOut}^0(A_\Gamma))}{\abs{\Out(A_\Gamma):\mathrm{SOut}^0(A_\Gamma))}}=\frac{\beta_1^{(2)}(\mathrm{SL}_2(\mathbb{Z}))}{2^8}=\frac{1}{2^{10}\cdot 3}$$
\end{example}		
		
\begin{example}
		Consider the following graph $\Gamma$:
			$$\begin{tikzpicture}[main/.style = {draw, circle},node distance={15mm}] 
			\node[main] (1)  {}; 
			\node[main] (2) [below left of=1] {};
			\node[main] (3) [below right of=1] {}; 
			\node[main] (4) [above right of=3] {};
			\node[main] (5) [below right of=4] {};
			\node[main] (6) [below right of=2] {};
			\node[main] (7) [below right of=3] {};
			\node[main] (8) [below of=6] {};
			\node[main] (9) [below of=7] {};
			\draw[-] (1) -- (2);
			\draw[-] (1) -- (6);
			\draw[-] (1) -- (4);
			\draw[-] (1) -- (5);
			\draw[-] (2) -- (4);
			\draw[-] (2) -- (6);
			\draw[-] (2) -- (7);
			\draw[-] (2) -- (8);
			\draw[-] (3) -- (4);
			\draw[-] (3) -- (6);
			\draw[-] (3) -- (7);
			\draw[-] (3) -- (8);
			\draw[-] (3) -- (9);
			\draw[-] (4) -- (7);
			\draw[-] (5) -- (6);
			\draw[-] (5) -- (7);
			\draw[-] (5) -- (9);
			\draw[-] (8) -- (9);
		\end{tikzpicture} $$
		This graph defines a RAAG $A_\Gamma$ with no transvections and no non-trivial graph automorphisms. In [Appendix A, \cite{Wiedmer}], Wiedmer proves using Theorem \ref{PSO-RAAG} that $\mathrm{PSO}(A_{\Gamma})\cong F_2$. Hence:
		$$\beta_1^{(2)}(\Out(A_{\Gamma}))=\frac{\beta_1^{(2)}(\mathrm{PSO}(A_{\Gamma}))}{\abs{\Out(A_{\Gamma}):\mathrm{PSO}(A_{\Gamma})}}=\frac{\beta_1^{(2)}(F_2)}{2^9}=\frac{1}{2^9}$$		
\end{example}
\subsection{Higher dimensional $\ell^2$-Betti numbers}
Regarding the non-vanishing of $\ell^2$-Betti numbers, we can construct RAAGs whose automorphism or outer automorphism groups have prescribed patterns of non-zero $\ell^2$-Betti numbers. The following two results provide explicit examples illustrating these phenomena.
\begin{lemma}
	For each $n\geq 1$ there exists a graph $\Delta_n$ such that $\beta_i^{(2)}(\Aut(A_{\Delta_n}))>0$ if and only if $i=n$.
\end{lemma}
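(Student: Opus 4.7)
The plan is to leverage the family of graphs $\Gamma_m$ from Proposition \ref{nsphereRAAG} whose flag complexes realize spheres of all positive dimensions. For $n \geq 2$, I would set $\Delta_n = \Gamma_{n-1}$, so that $\widehat{\Delta_n} \simeq \mathbb{S}^{n-1}$ and $\Out(A_{\Delta_n})$ is finite. The case $n = 1$ will be handled separately by taking $\Delta_1$ to be the graph consisting of a single edge, so that $A_{\Delta_1} \cong \mathbb{Z}^2$ and $\Aut(A_{\Delta_1}) = \mathrm{GL}_2(\mathbb{Z})$, whose only non-vanishing $\ell^2$-Betti number is $\beta_1^{(2)} = 1/24$ by Theorem \ref{BettiGL(Z)}.

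For $n \geq 2$, the argument proceeds in two steps. First, since $\Out(A_{\Delta_n})$ is finite and $\Delta_n$ has more than one vertex, Lemma \ref{FiniteOutLemma} implies that the centre of $A_{\Delta_n}$ is trivial and that $A_{\Delta_n}$, identified with $\mathrm{Inn}(A_{\Delta_n})$, sits as a finite-index subgroup of $\Aut(A_{\Delta_n})$. Proposition \ref{FiniteIndexBetti} then yields
$$\beta_i^{(2)}(\Aut(A_{\Delta_n})) = \frac{\beta_i^{(2)}(A_{\Delta_n})}{[\Aut(A_{\Delta_n}) : A_{\Delta_n}]},$$
so the two groups share the same vanishing pattern.

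Second, by the Davis–Leary formula (Theorem \ref{BettiRAAG}),
$$\beta_i^{(2)}(A_{\Delta_n}) = \overline{\beta}_{i-1}(\widehat{\Delta_n}) = \overline{\beta}_{i-1}(\mathbb{S}^{n-1}),$$
which is non-zero precisely when $i - 1 = n - 1$, i.e. $i = n$. Combining the two steps gives the desired conclusion.

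I do not anticipate any serious obstacle: the construction is essentially an assembly of Proposition \ref{nsphereRAAG}, Lemma \ref{FiniteOutLemma}, Proposition \ref{FiniteIndexBetti}, and Theorem \ref{BettiRAAG}. The only subtle point is the degree shift by one between the reduced Betti numbers of the defining flag complex and the $\ell^2$-Betti numbers of the RAAG, which is why one uses $\Gamma_{n-1}$ (not $\Gamma_n$) to place the non-vanishing $\ell^2$-Betti number in degree exactly $n$.
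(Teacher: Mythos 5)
Your proposal is correct and follows essentially the same route as the paper: take $\Delta_1$ with $A_{\Delta_1}\cong\mathbb{Z}^2$ for $n=1$, and $\Delta_n=\Gamma_{n-1}$ from Proposition \ref{nsphereRAAG} for $n\geq 2$, then combine Lemma \ref{FiniteOutLemma}, Proposition \ref{FiniteIndexBetti}, and Theorem \ref{BettiRAAG}. The degree shift you flag is exactly the reason the paper also uses $\Gamma_{n-1}$ rather than $\Gamma_n$.
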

\begin{proof}
	For $n=1$ take $A_{\Delta_1}\cong\mathbb{Z}^2$, whose automorphism group is $\mathrm{GL}_2(\mathbb{Z})$. For $n\geq 2$, let $\Delta_n$ be the graph $\Gamma_{n-1}$ from Proposition \ref{nsphereRAAG}. By Lemma \ref{FiniteOutLemma}, $A_{\Delta_n}$ is a finite index subgroup of $\Aut(A_{\Delta_n})$. Using Theorem \ref{BettiRAAG} we get that  $\beta_i^{(2)}(A_{\Delta_n})>0$ if and only if $i=n$ and the result follows.
\end{proof}

\begin{proposition}
	Let $\lbrace n_i\rbrace_{i=1}^\infty\subset \lbrace 0,1\rbrace$ be a sequence with all but finitely many terms equal to zero. Then, there exists a RAAG $A_\Gamma$ such that $\beta_i^{(2)}(\Out(A_\Gamma))>0$ if and only if $n_i=1$.
\end{proposition}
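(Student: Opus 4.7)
The overall strategy is to first construct a RAAG $A_\Theta$ whose $\ell^2$-Betti numbers realize the prescribed vanishing pattern, and then apply Wiedmer's Theorem~\ref{FiniteindexRAAG} to obtain a RAAG $A_\Lambda$ in which $A_\Theta$ sits as a finite index subgroup of $\Out(A_\Lambda)$. Proposition~\ref{FiniteIndexBetti} will then transfer the pattern to $\Out(A_\Lambda)$ itself, and one takes $\Gamma=\Lambda$. Let $S=\{i\geq 1\mid n_i=1\}$, which is finite by hypothesis. The trivial case $S=\emptyset$ is handled directly by taking $A_\Gamma=\mathbb{Z}^3$, since $\Out(\mathbb{Z}^3)=\mathrm{GL}_3(\mathbb{Z})$ has all $\ell^2$-Betti numbers vanishing by Theorem~\ref{BettiGL(Z)}.

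Assume $S\neq\emptyset$. By Davis--Leary (Theorem~\ref{BettiRAAG}), it suffices to construct a simplicial graph $\Theta$ whose flag complex $\widehat{\Theta}$ satisfies $\overline{\beta}_{i-1}(\widehat{\Theta})>0$ if and only if $i\in S$. The plan is to exploit Proposition~\ref{nsphereRAAG}, which provides a simplicial graph $\Gamma_n$ with $\widehat{\Gamma_n}\simeq\mathbb{S}^n$ for each $n\geq 1$, and assemble these as a wedge of spheres in the appropriate dimensions. Concretely, I would form the one-point union $\Theta_0$ of the graphs $\Gamma_{i-1}$ for $i\in S\setminus\{1\}$, identifying a chosen vertex from each at a common point, and set $\Theta=\Theta_0$ if $1\notin S$, or $\Theta=\Theta_0\sqcup\{v_0\}$ if $1\in S$. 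In the edge case $S=\{1\}$, where $\Theta_0$ is vacuous, simply take $\Theta$ to be two isolated vertices.

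The construction works because a clique in a one-point union of simplicial graphs must lie entirely within a single summand, there being no edges crossing between summands other than through the wedge vertex; hence $\widehat{\Theta_0}$ is the topological wedge $\bigvee_{i\in S\setminus\{1\}}\widehat{\Gamma_{i-1}}\simeq\bigvee_{i\in S\setminus\{1\}}\mathbb{S}^{i-1}$. A short computation of reduced Betti numbers of wedges and disjoint unions then shows that $\overline{\beta}_{i-1}(\widehat{\Theta_0})>0$ exactly when $i\in S\setminus\{1\}$ and $\overline{\beta}_0(\widehat{\Theta_0})=0$, while adjoining an isolated vertex when $1\in S$ leaves higher reduced Betti numbers unchanged and raises $\overline{\beta}_0$ by one, yielding precisely the required pattern. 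Applying Wiedmer's theorem to $A_\Theta$ and invoking Proposition~\ref{FiniteIndexBetti} then gives
\[\beta_i^{(2)}(\Out(A_\Lambda))=\frac{\beta_i^{(2)}(A_\Theta)}{[\Out(A_\Lambda):A_\Theta]},\]
so the vanishing pattern carries over to $\Out(A_\Lambda)$. The only genuine subtlety is the $\mathbb{S}^0$ corner ($1\in S$), which cannot be accommodated by a wedge at a vertex since $\mathbb{S}^0$ is disconnected; adjoining an isolated vertex is the clean combinatorial substitute.
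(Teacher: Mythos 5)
Your proof is correct and follows essentially the same route as the paper: wedge the sphere-graphs $\Gamma_{i-1}$ from Proposition~\ref{nsphereRAAG} at a common vertex, adjoin an isolated vertex to handle $n_1=1$, read off the reduced Betti numbers via Davis--Leary, and transfer the pattern through Wiedmer's Theorem~\ref{FiniteindexRAAG} and Proposition~\ref{FiniteIndexBetti}. Your explicit treatment of the degenerate cases $S=\emptyset$ and $S=\{1\}$, and your justification that cliques in a one-point union stay within a single summand, are welcome refinements that the paper leaves implicit.
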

\begin{proof}
	For each $i\geq 2$ with $n_i=1$, consider the RAAG $A_{\Gamma_{i-1}}$ from Proposition \ref{nsphereRAAG}, and fix a vertex $v_i\in V(\Gamma_i)$. Construct a new graph $\Delta'$ by identifying all these $v_i$ into a common vertex. If $n_1=0$, set $\Delta=\Delta'$ and if $n_1=1$ define $\Delta$ to be the disjoint union of $\Delta'$ with a single isolated vertex. Then, the flag complex $\widehat{\Delta}$ is homotopy equivalent to the wedge sum of $(i-1)$-spheres for each $i\geq 2$, and possibly a disjoint point if $n_i=1$. Therefore, $\overline{\beta}_{i-1}(\widehat{\Delta})> 0$ if and only if $n_i=1$ . By Theorem \ref{BettiRAAG}, it follows that $\beta_{i}^{(2)}(A_\Delta)>0$ for $i\geq1$ if and only if $n_i=1$. Finally,  by Theorem \ref{FiniteindexRAAG}, there exists a graph $\Gamma$ such that $A_\Delta$ is of finite index in $\Out(A_\Gamma)$, completing the proof.
\end{proof}
\subsection{Algebraic fibring}\label{Algebraic fibring examples}
Let us now consider some examples of RAAGs $A_\Gamma$ for which we can determine whether $\Aut(A_\Gamma)$ or $\Out(A_\Gamma)$ virtually fibre, using the various methods developed throughout this paper.

\begin{example}
		Consider the graph $\Gamma$ given below:
	$$\begin{tikzpicture}[main/.style = {draw, circle},node distance={15mm}] 
		\node[main] (1)  {}; 
		\node[main] (2) [right of=1] {};
		\node[main] (3) [below left of=2] {};
		\node[main] (4) [right of=3] {};
		\node[main] (5) [right of=4] {};
		\node[main] (6) [right of=5] {};
		\node[main] (7) [below right of=3] {};
		\node[main] (8) [left of=7] {};
		\draw[-] (1) -- (2);
		\draw[-] (1) -- (8);
		\draw[-] (2) -- (3);
		\draw[-] (2) -- (6);
		\draw[-] (3) -- (4);
		\draw[-] (3) -- (7);
		\draw[-] (4) -- (5);
		\draw[-] (5) -- (6);
		\draw[-] (6) -- (7);
		\draw[-] (7) -- (8);	
	\end{tikzpicture} $$
 	The RAAG $A_\Gamma$ is transvection-free, so by Corollary \ref{FibringTFAut}, the group $\Aut(A_\Gamma)$ virtually fibres. On the other hand, applying Theorem \ref{PSO-RAAG}, we find that  $\mathrm{PSO}(A_\Gamma)\cong A_{C_4}$ where $C_4$ is the $4$-cycle. Hence, the map sending all generators of $A_{C_4}$ to $1$ defines a fibration of $\mathrm{PSO}(A_\Gamma)$, so $\Out(A_\Gamma)$ also virtually fibres. Moreover, since $\abs{\Aut(\Gamma)}= 4$ and the subgroup of inversions have order $2^8$, the index of $\mathrm{PSO}(A_\Gamma)$ in $\Out(A_\Gamma)$ is $2^8\cdot 4=2^{10}$. Therefore, we obtain:
	$$\beta_2^{(2)}(\Out(A_\Gamma))=\frac{\beta_2^{(2)}(\mathrm{PSO}(A_\Gamma))}{\abs{\Out(A_\Gamma):\mathrm{PSO}(A_\Gamma)}}=\frac{1}{2^{10}}$$
	and all other $\ell^2$-Betti numbers of $\Out(A_\Gamma)$ vanish.
\end{example}
\begin{example}	Consider $\Gamma$ to be the following graph:
	$$\begin{tikzpicture}[main/.style = {draw, circle},node distance={15mm}] 
		\node[main] (1)  {}; 
		\node[main] (2) [below of=1] {};
		\node[main] (3) [below left of =2] {};
		\node[main] (4) [below right of=2] {};
		\node[main] (5) [below left of=4] {};
		\node[main] (6) [right of=2, xshift=5mm] {};
		\node[main] (7) [right of=6] {};
		\node[main] (8) [right of=1,  xshift=10mm] {};
		\node[main] (9) [below left of=7,  xshift=2mm, yshift=-10.6mm] {};
		\draw[-] (1) -- (2);
		\draw[-] (1) -- (3);
		\draw[-] (1) -- (4);
		\draw[-] (1) -- (8);
		\draw[-] (2) -- (3);
		\draw[-] (2) -- (4);
		\draw[-] (2) -- (5);
		\draw[-] (3) -- (4);
		\draw[-] (3) -- (5);
		\draw[-] (4) -- (5);
		\draw[-] (5) -- (9);
		\draw[-] (6) -- (7);
		\draw[-] (6) -- (8);
		\draw[-] (6) -- (9);
		\draw[-] (7) -- (8);
		\draw[-] (7) -- (9);	
	\end{tikzpicture} $$
	The group $A_\Gamma$ has no non-inner partial conjugations, so $Q$ is a finite index subgroup of $\Out(A_\Gamma)$. One can verify that $E(\Lambda_\Gamma)=\mathrm{loop}(\Lambda_\Gamma)$, and thus $Q\cong\mathrm{SL}_2(\mathbb{Z})\times\mathrm{SL}_3(\mathbb{Z})$. It follows that all $\ell^2$-Betti numbers of $\Out(A_\Gamma)$ vanish. However, $A_\Gamma $ is a RAAG for which property $(P2)$ dos not hold while property $(P1.1)$ does hold. Therefore, by Corollary \ref{non-inner fibre} $\Out(A_\Gamma)$ is not virtually fibred.
\end{example}
\begin{example}	Consider the graph $\Gamma$ given below:
	$$\begin{tikzpicture}[main/.style = {draw, circle},node distance={15mm}] 
		\node[main] (1)  {}; 
		\node[main] (2) [below of=1] {};
		\node[main] (3) [below left of =2] {};
		\node[main] (4) [below right of=2] {};
		\node[main] (5) [below left of=4] {};
		\node[main] (6) [right of=2, xshift=5mm] {};
		\node[main] (7) [right of=6] {};
		\node[main] (8) [right of=1,  xshift=10mm] {};
		\node[main] (9) [below left of=7,  xshift=2mm, yshift=-10.6mm] {};
		\draw[-] (1) -- (2);
		\draw[-] (1) -- (3);
		\draw[-] (1) -- (4);
		\draw[-] (1) -- (8);
		\draw[-] (2) -- (3);
		\draw[-] (2) -- (4);
		\draw[-] (2) -- (5);
		\draw[-] (3) -- (4);
		\draw[-] (3) -- (5);
		\draw[-] (4) -- (5);
		\draw[-] (5) -- (9);
		\draw[-] (6) -- (7);
		\draw[-] (6) -- (8);
		\draw[-] (6) -- (9);
		\draw[-] (7) -- (8);
		\draw[-] (7) -- (9);	
		\draw[-] (1) -- (6);	
		\draw[-] (1) -- (7);	
	\end{tikzpicture} $$
	The group $A_\Gamma$ has no non-inner partial conjugations, so $Q$ is a finite index subgroup of $\Out(A_\Gamma)$. In this case, the transvection graph $\Lambda_\Gamma$ is the following:
		$$\begin{tikzpicture}[main/.style = {draw, circle},node distance={15mm}] 
		\node[main] (1)  {}; 
		\node[main] (2) [right of=1] {};
		\node[main] (3) [below right of=2] {};
		\node[main] (4) [below left of=3] {};
		\node[main] (5) [below left of=1] {};
		\node[main] (6) [below right of=5] {};
		\draw[->] (3) to [out=300,in=0,looseness=10] (3);
		\draw[->] (5) to [out=240,in=180,looseness=10] (5);
		\draw[->] (2) -- (1);
	\end{tikzpicture} $$
	
	 Since $E(\Lambda_\Gamma)\neq\mathrm{loop}(\Lambda_\Gamma)$, it follows from Lemma \ref{TranvectionsBetti} that all $\ell^2$-Betti numbers of $Q$ vanish. Thus, all $\ell^2$-Betti numbers of $\Out(A_\Gamma)$ vanish. On the other hand, since $A_\Gamma $ satisfies property (P2), $\Out(A_\Gamma)$ fibres by Proposition \ref{FibringQ}.
\end{example}

\begin{example}
	Let $A_\Gamma$ be the RAAG from Example~\ref{VanishingExample}. By \cite[Theorem~1.6]{Aramayona}, $\Aut(A_\Gamma)$ is virtually indicable. Therefore, $\Aut(A_\Gamma \times A_\Gamma)$ virtually algebraically fibres by Remark~\ref{Directproductfibring}.
\end{example}	 
\begin{example}\label{FibringAutLeaf}
	If $\Gamma$ contains a leaf and has trivial centre, then $A_\Gamma$ is directly indecomposable. Hence, if $\Gamma_1$ and $\Gamma_2$ are two graphs each with a leaf and trivial centre, then by \cite[Theorem~1.6]{Aramayona} the groups $\Aut(A_{\Gamma_1})$ and $\Aut(A_{\Gamma_2})$ are virtually indicable. Therefore, $\Aut(A_{\Gamma_1} \times A_{\Gamma_2})$ virtually algebraically fibres by Remark~\ref{Directproductfibring}.
\end{example}
\begin{example}
	Consider the following graph $\Gamma$:
	$$\begin{tikzpicture}[main/.style = {draw, circle},node distance={15mm}] 
		\node[main](1){};
		\node[main] (2) [above left of=1] {}; 
		\node[main] (3) [above right of=1] {}; 
		\node[main] (4) [below left of=1] {};
		\node[main] (5) [below right of=1] {};
		\node[main] (6) [right of=1] {};
		\node[main] (7) [above right of=2] {};
		
		\draw[-] (1) -- (2);
		\draw[-] (2) -- (3);
		\draw[-] (3) -- (1);
		\draw[-] (4) -- (1);
		\draw[-] (5) -- (1);
		\draw[-] (6) -- (1);
		\draw[-] (4) -- (5);
		\draw[-] (7) -- (2);
		\draw[-] (7) -- (3);
	\end{tikzpicture} $$
By Example~\ref{FibringAutLeaf} it follows that $\Aut(A_\Gamma \times A_\Gamma)$ virtually algebraically fibres. Furthermore, the outer automorphism group $\Out(A_\Gamma)$ is virtually indicable by \cite[Theorem~1.6]{Aramayona}. Therefore, $\Out(A_\Gamma \times A_\Gamma)$ virtually algebraically fibres by Remark~\ref{Directproductfibring}. By Proposition~\ref{CenterAutomorphism}, the group $\Out(A_\Gamma)^2$ is a finite-index subgroup of $\Out(A_\Gamma \times A_\Gamma)$. As a result, applying Theorem~\ref{VanishingHigerBetti}.6, we conclude that all $\ell^2$-Betti numbers of $\Out(A_\Gamma)$, and hence of $\Out(A_\Gamma \times A_\Gamma)$, vanish.

\end{example}
\subsection{Higher virtual algebraic fibring}
If $\mathcal{P}$ is a finiteness property, such as being $FP_n$ or $F_n$, a group $G$ is said to be $\mathcal{P}$-\textbf{algebraically fibred} if there exists a group epimorphism $\varphi:G\to\mathbb{Z}$ whose kernel is of type $\mathcal{P}$. This notion is closely connected to $\ell^2$-Betti numbers: if $G$ is virtually $FP_n$-algebraically fibred then $\beta_i^{(2)}(G)=0~\forall~i=0,\dots,n$ [c.f. \cite{Luck} Theorem 7.2 (6)].

We now demonstrate the existence of RAAGs whose (outer) automorphism groups exhibit higher virtual algebraic fibring properties.
\begin{lemma}
	For each $n\geq 1$ there exists a RAAG $A_{\Gamma_n}$  such that $\Aut(A_{\Gamma_n})$ is virtually $FP_n$-algebraically fibred but not $FP_{n+1}$-algebraically fibred.
\end{lemma}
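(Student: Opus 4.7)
The plan is to take $\Gamma_n$ to be the graph produced in Proposition \ref{nsphereRAAG}, for which $\widehat{\Gamma_n} \simeq \mathbb{S}^n$ and $\Out(A_{\Gamma_n})$ is finite. The whole argument will then rest on combining the Bestvina–Brady fibration of $A_{\Gamma_n}$ with the Davis–Leary formula for the $\ell^2$-Betti numbers of a RAAG. The underlying intuition is that $\widehat{\Gamma_n} \simeq \mathbb{S}^n$ sits exactly at the boundary between being $(n-1)$-acyclic and being $n$-acyclic.

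First I would record the structural consequences of $\Out(A_{\Gamma_n})$ being finite. Since $\abs{V(\Gamma_n)} \geq 2$, Lemma \ref{FiniteOutLemma} guarantees that $A_{\Gamma_n}$ has finite index in $\Aut(A_{\Gamma_n})$. Since $\mathbb{S}^n$ is connected for $n \geq 1$, the graph $\Gamma_n$ is connected, so the homomorphism $\varphi : A_{\Gamma_n} \to \mathbb{Z}$ sending every standard generator to $1$ is an epimorphism with kernel $BB_{\Gamma_n}$. As $\widehat{\Gamma_n} \simeq \mathbb{S}^n$ is $(n-1)$-connected, it is in particular $(n-1)$-acyclic, so Theorem \ref{Bestvina-Brady} gives that $BB_{\Gamma_n}$ is of type $FP_n$. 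Hence $A_{\Gamma_n}$ is $FP_n$-algebraically fibred, and therefore $\Aut(A_{\Gamma_n})$ is virtually $FP_n$-algebraically fibred.

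For the negative part, I would argue by contradiction using $\ell^2$-Betti numbers. If $\Aut(A_{\Gamma_n})$ were virtually $FP_{n+1}$-algebraically fibred, then by \cite{Luck} Theorem 7.2 (6) (recalled at the beginning of this section) we would have $\beta_i^{(2)}(\Aut(A_{\Gamma_n})) = 0$ for all $i \leq n+1$. Combining Proposition \ref{FiniteIndexBetti} with Theorem \ref{BettiRAAG}, one computes
\[
\beta_{n+1}^{(2)}(\Aut(A_{\Gamma_n})) \;=\; \frac{\beta_{n+1}^{(2)}(A_{\Gamma_n})}{[\Aut(A_{\Gamma_n}):A_{\Gamma_n}]} \;=\; \frac{\overline{\beta}_n(\widehat{\Gamma_n})}{[\Aut(A_{\Gamma_n}):A_{\Gamma_n}]} \;=\; \frac{\overline{\beta}_n(\mathbb{S}^n)}{[\Aut(A_{\Gamma_n}):A_{\Gamma_n}]} \;=\; \frac{1}{[\Aut(A_{\Gamma_n}):A_{\Gamma_n}]} \;>\; 0,
\]
which contradicts the vanishing forced by the virtual $FP_{n+1}$-fibration.

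I do not anticipate any substantial obstacle: every ingredient has been explicitly set up earlier in the paper, and the argument is essentially a dictionary between acyclicity of $\widehat{\Gamma_n}$, finiteness properties of $BB_{\Gamma_n}$, and vanishing of $\ell^2$-Betti numbers of $A_{\Gamma_n}$. The only mild technicality is checking that $\Gamma_n$ has more than one vertex so that Lemma \ref{FiniteOutLemma} applies, which is immediate from the construction in Proposition \ref{nsphereRAAG}.
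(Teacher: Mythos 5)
Your proof is correct and follows essentially the same route as the paper: take $\Gamma_n$ from Proposition \ref{nsphereRAAG}, use Lemma \ref{FiniteOutLemma} to see that $A_{\Gamma_n}$ has finite index in $\Aut(A_{\Gamma_n})$, get the positive direction from Theorem \ref{Bestvina-Brady} applied to $\widehat{\Gamma_n}\simeq\mathbb{S}^n$ being $(n-1)$-acyclic, and the negative direction from $\beta_{n+1}^{(2)}(A_{\Gamma_n})=\overline{\beta}_n(\mathbb{S}^n)>0$ via Theorem \ref{BettiRAAG} together with the vanishing criterion for virtually $FP_{n+1}$-fibred groups. The only difference is that you spell out the finite-index rescaling and the contradiction explicitly, which the paper leaves implicit.
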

\begin{proof}
	For each $n\geq 1$ consider the RAAG $A_{\Gamma_{n}}$ constructed in Proposition \ref{nsphereRAAG}. This RAAG has finite outer automorphism group and, by Lemma \ref{FiniteOutLemma}, $A_{\Gamma_{n}}$ has finite index in $\Aut(A_{\Gamma_{n}})$. By Theorem \ref{BettiRAAG}, we have $\beta_{n+1}^{(2)}(A_{\Gamma_{n}})>0$, and thus $\beta_{n+1}^{(2)}(\Aut(A_{\Gamma_{n}}))>0$. Hence, $\Aut(A_{\Gamma_{n}})$ is not virtually $FP_{n+1}$-algebraically fibred. On the other hand, by Theorem \ref{Bestvina-Brady}, the associated Bestvina-Brady group is of type $FP_n$, so $A_{\Gamma_{n}}$ is $FP_n$- algebraically fibred. Since $A_{\Gamma_{n}}$ has finite index in its automorphism group,  $\Aut(A_{\Gamma_{n}})$ is virtually $FP_n$-algebraically fibred.
\end{proof}

\begin{lemma} For each $n\geq 2$ the group $\Out(F_2^n)$ is virtually $FP_{n-1}$-algebraically fibred but not $FP_{n}$-algebraically fibred. Moreover, there exists a RAAG $A_\Gamma$ such that $\Out(A_\Gamma)$ is virtually $FP$-algebraically fibred.
\end{lemma}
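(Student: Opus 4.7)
The plan is to identify $F_2^n$ as a right-angled Artin group and exploit Bestvina--Brady theory together with the Davis--Leary formula. First, I would apply Proposition~\ref{CenterAutomorphism} to the centreless, directly indecomposable group $F_2$, obtaining $\Out(F_2^n) \cong \mathrm{GL}_2(\mathbb{Z})^n \rtimes S_n$. Since $F_2$ sits inside $\mathrm{GL}_2(\mathbb{Z})$ as a subgroup of index $24$ (cf.\ \cite{Newmann}), the direct product $F_2^n$ is a finite index subgroup of $\Out(F_2^n)$, which reduces all virtual statements about $\Out(F_2^n)$ to statements about $F_2^n$. Next I would observe that $F_2^n \cong A_{\Gamma_n}$, where $\Gamma_n$ is the complete $n$-partite graph $K_{2,\dots,2}$ obtained as the $n$-fold graph join of the discrete graph on two vertices, so that $\widehat{\Gamma_n}$ is the iterated join of $n$ copies of $S^0$, namely the sphere $S^{n-1}$.

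For the positive fibring assertion I would apply Theorem~\ref{Bestvina-Brady}: the Bestvina--Brady group $BB_{\Gamma_n}$ is of type $FP_k$ precisely when $S^{n-1}$ is $(k-1)$-acyclic, so $BB_{\Gamma_n}$ is $FP_{n-1}$ but not $FP_n$. For $n \geq 2$ the graph $\Gamma_n$ is connected, so the sum map $\varphi\colon F_2^n \to \mathbb{Z}$ is an algebraic fibration with kernel $BB_{\Gamma_n}$; hence $F_2^n$ is $FP_{n-1}$-algebraically fibred, and by Step~1 the same holds virtually for $\Out(F_2^n)$. The obstruction to $FP_n$-fibring then comes from $\ell^2$-Betti numbers: by the Davis--Leary formula (Theorem~\ref{BettiRAAG}) we have $\beta_n^{(2)}(F_2^n) = \overline{\beta}_{n-1}(S^{n-1}) = 1$, which by Proposition~\ref{FiniteIndexBetti} gives $\beta_n^{(2)}(\Out(F_2^n)) > 0$. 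If $\Out(F_2^n)$ were virtually $FP_n$-algebraically fibred, the fact recalled at the opening of this subsection would force this $\ell^2$-Betti number to vanish, a contradiction.

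For the \emph{moreover} statement I would pick any connected graph $\Gamma$ whose flag complex $\widehat{\Gamma}$ is acyclic --- the simplest choice being $\Gamma = K_3$, giving $A_\Gamma = \mathbb{Z}^3$ and $BB_\Gamma = \mathbb{Z}^2$ of type $FP$ by Theorem~\ref{Bestvina-Brady}. The sum map then exhibits $A_\Gamma$ as $FP$-algebraically fibred. Applying Theorem~\ref{FiniteindexRAAG} (Wiedmer) I obtain a graph $\Lambda$ such that $A_\Gamma$ embeds as a finite index subgroup of $\Out(A_\Lambda)$, whence $\Out(A_\Lambda)$ is virtually $FP$-algebraically fibred. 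The main obstacle I anticipate is essentially bookkeeping: correctly identifying the RAAG structure $F_2^n \cong A_{\Gamma_n}$ and verifying the homotopy type of $\widehat{\Gamma_n}$, after which the argument is a direct concatenation of results already recorded in the preliminaries.
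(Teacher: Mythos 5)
Your proposal is correct and follows essentially the same route as the paper: reduce to $F_2^n$ via Proposition~\ref{CenterAutomorphism} and Newman's index-$24$ embedding, use Bestvina--Brady for the positive direction and the Davis--Leary formula for the obstruction, and invoke Wiedmer's theorem for the \emph{moreover} part. The only (harmless) difference is that you instantiate the final step with the concrete choice $\Gamma=K_3$, whereas the paper works with an arbitrary RAAG whose flag complex is contractible.
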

\begin{proof}
	By  Proposition \ref{CenterAutomorphism} $\Out(F_2)^n$ is of finite index in $\Out(F_2^n)$ and, since $F_2$ has finite index in $\Out(F_2)$  (c.f. \cite{Newmann}), it follows that $F_2^n$ has finite index in $\Out(F_2^n)$. Let $\Gamma_n$ be a graph such that $A_{\Gamma_n}\cong F^{n}_2$. Then, $\widehat{\Gamma_n}=\mathbb{S}^{n-1}$ and Theorem \ref{BettiRAAG} implies that $\beta_{n}^{(2)}(\Out(F_2^n))>0$, so $\Out(F_2^n)$ is not virtually $FP_n$-algebraically fibred. However, the associated Bestvina-Brady group $BB_{\Gamma_n}$ is of type $FP_{n-1}$ by Theorem \ref{Bestvina-Brady}, so $\Out(F_2^n)$ is virtually $FP_{n-1}$-algebraically fibred.
	
	For the second part consider a RAAG $A_{\Omega}$ such that $\widehat{\Omega}$ is contractible. Then, $BB_\Omega$ is of type $FP$ Theorem \ref{Bestvina-Brady} and, by Theorem  \ref{FiniteindexRAAG}, $A_{\Omega}$ embeds as a finite index subgroup of  $\Out(A_\Gamma)$ for some RAAG $A_\Gamma$. This shows that $\Out(A_\Gamma)$ is virtually $FP$-algebraically fibred.
\end{proof}

\end{document}